\newtheorem{Theorem}{Theorem} 
\newaliascnt{Lemma}{Theorem}
\newtheorem{Lemma}[Lemma]{Lemma}
\newaliascnt{Proposition}{Theorem}
\newtheorem{Proposition}[Proposition]{Proposition}
\newaliascnt{Corollary}{Theorem}
\newtheorem{Corollary}[Corollary]{Corollary}
\newaliascnt{Definition}{Theorem}
\theoremstyle{definition}
\newtheorem{Definition}[Definition]{Definition}
\numberwithin{equation}{section}
\renewcommand{\phi}{\varphi}
\newcommand{\C}{\operatorname{C}}
\newcommand{\N}{\operatorname{N}}
\newcommand{\Z}{\operatorname{Z}}
\newcommand{\Aut}{\operatorname{Aut}}
\newcommand{\Out}{\operatorname{Out}}
\newcommand{\GL}{\operatorname{GL}}
\newcommand{\SL}{\operatorname{SL}}
\newcommand{\Irr}{\operatorname{Irr}}
\newcommand{\IBr}{\operatorname{IBr}}
\newcommand{\Bl}{\operatorname{Bl}}
\newcommand{\tr}{\operatorname{tr}}
\newcommand{\id}{\operatorname{id}}
\mathchardef\ordinarycolon\mathcode`\:  
\title{Cartan matrices and Brauer's \\$k(B)$-Conjecture IV}
\author{Benjamin Sambale}
\date{\today}
\begin{document}
\frenchspacing
\maketitle
\begin{abstract}\noindent
In this note we give applications of recent results coming mostly from the third paper of this series. 
It is shown that the number of irreducible characters in a $p$-block of a finite group with abelian defect group $D$ is bounded by $|D|$ (Brauer's $k(B)$-Conjecture) provided $D$ has no large elementary abelian direct summands. 
Moreover, we verify Brauer's $k(B)$-Conjecture for all blocks with minimal non-abelian defect groups. This extends previous results by various authors. 
\end{abstract}

\textbf{Keywords:} blocks, minimal non-abelian defect groups, abelian defect groups, Brauer's $k(B)$-Conjecture\\
\textbf{AMS classification:} 20C15, 20C20

\section{Introduction}
Let $p$ be a prime and let $G$ be a finite group. We consider $p$-blocks $B$ of $G$ with respect to a $p$-modular system which is “large enough” in the usual sense. In two recent articles \cite{SambaleC3,WatanabeAWC} properties of the Cartan matrix $C$ of $B$ have been expressed in terms of the defect group $D$ of $B$. In the present paper we apply these results in order to prove the inequality $k(B)\le|D|$ (Brauer's $k(B)$-Conjecture) in certain cases where $k(B)$ denotes the number of irreducible characters in $B$. Continuing former work by several authors \cite{EKS,GaoZeng,Hendren2,Hendren1,Sambalemna,GaoControl}, we verify Brauer's $k(B)$-Conjecture for all blocks with minimal non-abelian defect groups. Here a group is called \emph{minimal non-abelian} if all its proper subgroups are abelian, but the group itself is non-abelian. This leads also to a proof of Brauer's Conjecture for the $5$-blocks of defect $3$. 

In the last part of the paper we revisit a theorem of Watanabe~\cite{Watanabe1,Watanabe2} about blocks with abelian defect groups. Watanabe has studied a certain correspondence of blocks whenever the inertial group has non-trivial fixed points on $D$ (similar to the $\Z^*$-Theorem). We will show that this correspondence often preserves Cartan matrices up to basic sets (this means up to a transformation of the form $C\mapsto SCS^\text{T}$ for some $S\in\GL(l,\mathbb{Z})$ where $S^\text{T}$ denotes the transpose of $S$). As another tool we show that a coprime action on an abelian $p$-group without elementary abelian direct summands always has a regular orbit. This is used to give a proof of Brauer's $k(B)$-Conjecture for abelian defect groups $D$ such that $D$ has no elementary abelian direct summand of order $p^3$. Improvements of this result for small primes are also presented. In particular, we verify Brauer's Conjecture for $2$-blocks with abelian defect groups of rank at most $7$. This greatly generalizes some results in \cite{Sbrauerfeit}. Some of the proofs rely implicitly on the classification of the finite simple groups.

Most of our notation is standard and can be found in \cite{Feit,Nagao,habil} for example. The number of irreducible Brauer characters of $B$ is denoted by $l(B)$. Moreover, we denote the inertial quotient of $B$ by $I(B)$. Its order $e(B):=|I(B)|$ is the inertial index of $B$. A cyclic group of order $n$ is denoted by $Z_n$, and for convenience, $Z_n^m:=Z_n\times\ldots\times Z_n$ ($n$ copies). Commutators are defined as $[x,y]:=xyx^{-1}y^{-1}$ and groups act from the left as $^ax$. We say that a finite group $A$ acts \emph{freely} on a finite group $H$ if $\C_A(x)=1$ for all $1\ne x\in H$. For an abelian $p$-group $P$ we set $\Omega_i(P):=\{x\in P:x^{p^i}=1\}$ and $\Omega(P):=\Omega_1(P)$.

\section{Fusion systems}
We start by recalling some notation from the theory of fusion systems. Details can be found in \cite{AKO}. Our fusion systems will always be saturated.

\begin{Definition}\label{def}
Let $\mathcal{F}$ be a fusion system on a finite $p$-group $P$. 
\begin{enumerate}[(i)]
\item A subgroup $Q\le P$ is called \emph{fully $\mathcal{F}$-centralized} if $\lvert\C_P(\phi(Q))\rvert\le\lvert\C_P(Q)\rvert$ for all morphisms $\phi:Q\to P$ in $\mathcal{F}$.
\item If $Q$ is fully $\mathcal{F}$-centralized, then there is a fusion system $\C_{\mathcal{F}}(Q)$ on $\C_P(Q)$ defined as follows: a group homomorphism $\phi:R\to S$ \textup{(}$R,S\le\C_P(Q)$\textup{)} belongs to $\C_{\mathcal{F}}(Q)$ if there exists a morphism $\psi:QR\to QS$ in $\mathcal{F}$ such that $\psi_{|Q}=\id_Q$ and $\psi_{|R}=\phi$.
\item If $Q$ is abelian and fully $\mathcal{F}$-centralized, then there is a fusion system $\C_{\mathcal{F}}(Q)/Q$ on $\C_P(Q)/Q$ defined as follows: a group homomorphism $\phi:R/Q\to S/Q$ \textup{(}$Q\le R,S\le\C_P(Q)$\textup{)} belongs to $\C_{\mathcal{F}}(Q)/Q$ if there exists a morphism $\psi:R\to S$ in $\C_{\mathcal{F}}(Q)$ such that $\psi(u)Q=\phi(uQ)$ for all $u\in R$.
\end{enumerate}
\end{Definition}

If in the situation of \autoref{def} the group $Q$ is cyclic, say $Q=\langle u\rangle$, then we write $\C_{\mathcal{F}}(u)$ instead of $\C_{\mathcal{F}}(\langle u\rangle)$.

Let $B$ be a block of a finite group $G$ with defect group $D$. Recall that a ($B$-)\emph{subsection} is a pair $(u,b_u)$ such that $u\in D$ and $b_u$ is a Brauer correspondent of $B$ in $\C_G(u)$. If $b_u$ and $B$ have the same defect, the subsection is called \emph{major}. This holds for example for the \emph{trivial} subsection $(1,B)$. More generally, a ($B$-)\emph{subpair} is a pair $(Q,b_Q)$ such that $Q\le D$ and $b_Q$ is a Brauer correspondent of $B$ in $\C_G(Q)$. In case $Q=D$, we say $(D,b_D)$ is a Sylow $B$-subpair.
It is well-known that every block $B$ of a finite group with defect group $D$ determines a fusion system $\mathcal{F}$ on $D$ which describes the conjugation of subpairs. In this setting, $I(B)\cong\Out_{\mathcal{F}}(D)$. By the Schur-Zassenhaus Theorem we can consider $I(B)$ as a subgroup of $\Aut(D)$. 

The next lemma might be already known, but we were unable to find a reference (cf. \cite[Theorem~1.5]{Olsson}). Therefore a proof is given.

\begin{Lemma}\label{fusiondom}
Let $B$ be a $p$-block of a finite group $G$ with defect group $D$ and fusion system $\mathcal{F}$. Let $Z\le\Z(G)$ be a $p$-subgroup. Then $B$ dominates a unique block $\overline{B}$ of $G/Z$ with defect group $D/Z$ and fusion system $\mathcal{F}/Z$.
\end{Lemma}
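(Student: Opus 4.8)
The plan is to establish in turn the three assertions in the statement: that $B$ dominates a unique block $\overline{B}$ of $G/Z$, that $\overline{B}$ has defect group $D/Z$, and that the fusion system of $\overline{B}$ is $\mathcal{F}/Z$.

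For the first point I would use the canonical epimorphism $\pi\colon\mathcal{O}G\to\mathcal{O}[G/Z]$. Reduced modulo $J(\mathcal{O})$ it becomes $kG\to k[G/Z]$ with kernel the ideal generated by $\{z-1:z\in Z\}$; since $Z$ is a central $p$-group this ideal is nilpotent, hence contained in $J(kG)$, so idempotents lift uniquely along it. Thus $\pi$ maps the block idempotents of $\mathcal{O}G$ bijectively onto those of $\mathcal{O}[G/Z]$, and $\overline{B}$ is the block with $\pi(e_B)=e_{\overline{B}}$, which is the unique block dominated by $B$; the same bijection applied to centralizers in $G$ is compatible with conjugation, a fact I will reuse. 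For the defect group I would invoke the standard domination theory: since $Z\le\pcore_p(G)$ lies in every defect group we have $Z\le D$, and then $\overline{B}$ has defect group $DZ/Z=D/Z$ (see \cite{Nagao}).

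The real work is the fusion system. Fix a Sylow $B$-subpair $(D,b_D)$ and put $\mathcal{F}=\mathcal{F}_{(D,b_D)}(B)$. As $Z\le\Z(G)$, every $\mathcal{F}$-morphism restricts to the identity on $Z$, so $Z\le\Z(\mathcal{F})$, $\C_D(Z)=D$, $\C_{\mathcal{F}}(Z)=\mathcal{F}$, and $\mathcal{F}/Z$ in the sense of \autoref{def}(iii) is a fusion system on $D/Z$. For $Z\le Q\le D$ let $(Q,b_Q)$ be the unique $B$-subpair below $(D,b_D)$. Then $Z\le\C_G(Q)$, so $b_Q$ dominates a block $\overline{b_Q}$ of $\C_G(Q)/Z$; moreover $\C_G(Q)/Z$ is normal in $\C_{G/Z}(Q/Z)$ of $p$-power index, because the quotient embeds into $\Hom(Q,Z)$ via $g\mapsto(q\mapsto[g,q])$ and $\Hom(Q,Z)$ is a $p$-group since $Z$ is. Hence $\overline{b_Q}$ is covered by a \emph{unique} block $b_{\overline{Q}}$ of $\C_{G/Z}(Q/Z)$. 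I would check that $Q\mapsto(Q/Z,b_{\overline{Q}})$ is compatible with $G$-conjugation and with inclusion of Brauer pairs, so that $(D/Z,b_{\overline{D}})$ is a Sylow $\overline{B}$-subpair and $(Q/Z,b_{\overline{Q}})$ the $\overline{B}$-subpair below it. Given this, an element $g\in G$ preserving the relevant $B$-subpairs makes $\overline{g}$ preserve the corresponding $\overline{B}$-subpairs, and conversely any $\overline{h}\in G/Z$ preserving the $\overline{B}$-subpairs can be corrected — using that the blocks of $\C_G({}^{h}R)/Z$ covered by a fixed block of $\C_{G/Z}({}^{h}R/Z)$ form a single orbit — into some $g\in G$ preserving the $B$-subpairs and inducing the same map on $R/Z$. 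Comparing the Alperin--Brou\'e descriptions of $\mathcal{F}_{\overline{B}}$ and of $\mathcal{F}$ then yields $\Hom_{\mathcal{F}_{\overline{B}}}(R/Z,D/Z)=\Hom_{\mathcal{F}/Z}(R/Z,D/Z)$ for all $Z\le R\le D$, hence $\mathcal{F}_{\overline{B}}=\mathcal{F}/Z$.

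The step I expect to be the crux is the construction and control of $b_{\overline{Q}}$. Since $\C_{G/Z}(Q/Z)$ is genuinely larger than $\C_G(Q)/Z$ in general — already for $G$ a non-abelian $p$-group with $Q=G$ — one cannot transport $b_Q$ along $\pi$ directly; the passage to $\C_{G/Z}(Q/Z)$ rests on the $p$-power-index observation together with the uniqueness of the block lying over a stable block of a normal subgroup of $p$-power index, and the compatibility of this assignment with inclusion of Brauer pairs — on which the whole comparison of homomorphism sets depends — must be verified with care.
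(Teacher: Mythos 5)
Your proposal is correct and follows essentially the same route as the paper: for each $Z\le Q\le D$ the paper also passes from the subpair $(Q,b_Q)\le(D,b_D)$ to a block of $\C_{G/Z}(Q/Z)$ — it forms the block $\beta_Q=b_Q^{C_Q}$ of the preimage $C_Q$ of $\C_{G/Z}(Q/Z)$ (which contains $\C_G(Q)$ with $p$-power index, exactly your $\Hom(Q,Z)$ observation) and then dominates, which yields the same block as your dominate-then-cover construction, and it corrects a lift $\overline{h}$ by an element of $C_{{}^hR}$ inducing the identity on ${}^hR/Z$, just as you propose. The one step you only flag — that $(Q/Z,b_{\overline{Q}})$ really is a $\overline{B}$-subpair and that the assignment respects inclusion of subpairs — is where the paper invests its effort (a central-character/Brauer-homomorphism computation giving $\overline{\beta_Q}^{\overline{G}}=\overline{B}$ and a block-induction calculation for normal inclusions), but it goes through routinely in your setup and does not affect the soundness of the approach.
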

\begin{proof}
Since $Z\unlhd G$, we have $Z\le D$. Moreover, it is easy to see that $\mathcal{F}=\C_{\mathcal{F}}(Z)$. Hence, $\mathcal{F}/Z$ is well defined. The uniqueness of $\overline{B}$ and its defect group can be found in \cite[Theorem~5.8.11]{Nagao}. 
It remains to determine the fusion system of $\overline{B}$.
For $H\le G$ we write $\overline{H}:=HZ/Z$. We fix a Sylow $B$-subpair $(D,b_D)$. For every subgroup $Z\le Q\le D$ there exists a unique $B$-subpair $(Q,b_Q)$ such that $(Q,b_Q)\le(D,b_D)$. Let $\C_{\overline{G}}(\overline{Q})=\overline{C_Q}$ with $\C_G(Q)\le C_Q\le \N_G(Q)$. Moreover, let $\beta_Q:=b_Q^{C_Q}$, and let $\overline{\beta_Q}$ be the unique block of $\C_{\overline{G}}(\overline{Q})$ dominated by 
$\beta_Q$.
We claim that $(\overline{Q},\overline{\beta_Q})$ is a $\overline{B}$-subpair. To prove this, we need to show that $\overline{\beta_Q}^{\overline{G}}=\overline{B}$. Let $e_B$ be the block idempotent of $B$ with respect to an algebraically closed field $F$ of characteristic $p$. Let $\theta:FG\to F\overline{G}$ be the canonical epimorphism. Then $\theta(e_B)=e_{\overline{B}}$. Let $\omega_{\beta_Q}$ be the central character of $\beta_Q$. Then, by \cite[Lemma~5.8.5]{Nagao}, the central character $\omega_{\overline{\beta_Q}}$ of $\overline{\beta_Q}$ satisfies $\omega_{\beta_Q}=\omega_{\overline{\beta_Q}}\circ\theta$ where $\theta$ is identified with its restriction to $\Z(FC_Q)$. Let 
\[\eta:\Z(FG)\to\Z(FC_Q),\ \sum_{g\in G}{\alpha_gg}\mapsto\sum_{g\in C_Q}{\alpha_gg}\hspace{2cm}(\alpha_g\in F).\]
Then the analogous map $\overline{\eta}:\Z(F\overline{G})\to\Z(F\C_{\overline{G}}(\overline{Q}))$ is the Brauer homomorphism.
Moreover,
\[\omega_{\overline{\beta_Q}}(\overline{\eta}(e_{\overline{B}}))=\omega_{\overline{\beta_Q}}(\overline{\eta}(\theta(e_B)))=\omega_{\overline{\beta_Q}}(\theta(\eta(e_B)))=\omega_{\beta_Q}(\eta(e_B))=\omega_B(e_B)=1.\]

This shows that $\overline{\beta_Q}^{\overline{G}}=\overline{B}$ and $(\overline{Q},\overline{\beta_Q})$ is a $\overline{B}$-subpair. 
In particular, $(\overline{D},\overline{\beta_D})$ is a Sylow $\overline{B}$-subpair. Suppose that $(R,b_R)\unlhd (S,b_S)$ for some subgroups $Z\le R\unlhd S\le D$. Then $b_R^{\C_G(R)S}=b_S^{\C_G(R)S}$. 
As we have seen above, \[\overline{\beta_R}^{\C_{\overline{G}}(\overline{R})\overline{S}}=\overline{\beta_R}^{\overline{C_RS}}=\overline{\beta_R^{C_RS}}=\overline{b_R^{C_RS}}=\overline{b_S^{C_RS}}=\overline{\beta_S^{C_RS}}=\overline{\beta_S}^{\overline{C_RS}}=\overline{\beta_S}^{\C_{\overline{G}}(\overline{R})\overline{S}}\]
(observe that $\C_G(R)S\le C_RS\le G$). 
This implies $(\overline{R},\overline{\beta_R})\unlhd(\overline{S},\overline{\beta_S})$. Therefore the poset of $B$-subpairs $(Q,b_Q)\le(D,b_D)$ such that $Z\le Q$ is in one-to-one correspondence with the poset of $\overline{B}$-subpairs via Brauer correspondence and $\theta$. Let $\mathcal{F}'$ be the fusion system of $\overline{B}$. Suppose that $\overline{\phi}:\overline{R}\to\overline{S}$ is a morphism in $\mathcal{F}'$ for $Z\le R,S\le D$. Then there exists a $g\in G$ such that 
$\overline{g}(\overline{R},\overline{\beta_R})\overline{g}^{-1}\le(\overline{S},\overline{\beta_S})$ and $\overline{\phi}(\overline{x})=\overline{g}\overline{x}\overline{g}^{-1}$ for all $\overline{x}\in\overline{R}$. 
Obviously, we have $gRg^{-1}\le S$. Moreover, $\overline{g\beta_Rg^{-1}}=\overline{g}\overline{\beta_R}\overline{g}^{-1}=\overline{\beta_{gRg^{-1}}}$ and \[(gb_Rg^{-1})^{C_{gRg^{-1}}}=g(b_R^{C_R})g^{-1}=g\beta_Rg^{-1}=\beta_{gRg^{-1}}=b_{gRg^{-1}}^{C_{gRg^{-1}}}.\]
It follows that there exists an element $h\in C_{gRg^{-1}}\le\N_G(gRg^{-1})$ such that $hgb_Rg^{-1}h^{-1}=b_{gRg^{-1}}$ and $\overline{\phi}(\overline{x})=\overline{hgxg^{-1}h^{-1}}$ for $\overline{x}\in\overline{R}$. Therefore, $hg(R,b_R)g^{-1}h^{-1}\le(S,b_S)$ and the map $\phi:R\to S$ such that $\phi(x):=hgxg^{-1}h^{-1}$ for $x\in R$ is a morphism in $\mathcal{F}$. Conversely, if $\phi:R\to S$ is given in $\mathcal{F}$, then it is easy to see that the corresponding map $\overline{\phi}$ lies in $\mathcal{F}'$. Consequently, $\mathcal{F}'=\mathcal{F}/Z$. 
\end{proof}

\begin{Lemma}\label{fusion}
Let $B$ be a block of a finite group $G$ with defect group $D$ and fusion system $\mathcal{F}$. Let $(u,b)$ be a $B$-subsection such that $\langle u\rangle$ is fully $\mathcal{F}$-centralized. Then $b$ has defect group $\C_D(u)$ and fusion system $\C_{\mathcal{F}}(u)$. Moreover, $b$ dominates a unique block $\overline{b}$ of $\C_G(u)/\langle u\rangle$ with defect group $\C_D(u)/\langle u\rangle$ and fusion system $\C_{\mathcal{F}}(u)/\langle u\rangle$. In particular, we have canonical isomorphisms 
\[I(\overline{b})\cong I(b)\cong\C_{\Out_{\mathcal{F}}(\C_D(u))}(u).\] 
If $\overline{b}$ has Cartan matrix $\overline{C}$, then $b$ has Cartan matrix $|\langle u\rangle|\overline{C}$. In particular, $l(b)=l(\overline{b})$.
\end{Lemma}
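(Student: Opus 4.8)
The plan is to treat the four assertions in turn, with the bulk of the real work lying in the comparison of Cartan matrices at the end. For the first sentence, the hypothesis that $\langle u\rangle$ is fully $\mathcal{F}$-centralized is exactly what is needed so that $\C_{\mathcal{F}}(u)$ is a saturated fusion system on $\C_D(u)=\C_D(\langle u\rangle)$, and the fact that then $b$ has defect group $\C_D(u)$ and fusion system $\C_{\mathcal{F}}(u)$ is part of the standard local structure of subpairs, which I would simply quote (cf. \cite{AKO,habil}). Granting this, $\langle u\rangle$ is a central $p$-subgroup of $\C_G(u)$, so \autoref{fusiondom}, applied to the block $b$ of $\C_G(u)$ with $Z=\langle u\rangle$, immediately yields a unique dominated block $\overline b$ of $\C_G(u)/\langle u\rangle$ with defect group $\C_D(u)/\langle u\rangle$ and fusion system $\C_{\mathcal{F}}(u)/\langle u\rangle$. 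That settles the first two claims.

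For the isomorphisms I would argue entirely inside the fusion system $\C_{\mathcal{F}}(u)$. Using $I(\beta)\cong\Out_{\mathcal{F}_\beta}(\text{defect group})$ for a block $\beta$, one has $I(b)\cong\Out_{\C_{\mathcal{F}}(u)}(\C_D(u))$. Since $\langle u\rangle\le\C_D(u)$, a morphism in $\C_{\mathcal{F}}(u)$ with source and target $\C_D(u)$ is, by \autoref{def}, nothing but an $\mathcal{F}$-automorphism of $\C_D(u)$ fixing $u$; hence $\Aut_{\C_{\mathcal{F}}(u)}(\C_D(u))=\{\phi\in\Aut_{\mathcal{F}}(\C_D(u)):\phi(u)=u\}$, and as $u\in\Z(\C_D(u))$ the inner automorphisms fix $u$, so passing to outer automorphism groups gives $\Out_{\C_{\mathcal{F}}(u)}(\C_D(u))\cong\C_{\Out_{\mathcal{F}}(\C_D(u))}(u)$. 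This is the right-hand isomorphism. For $I(\overline b)\cong I(b)$, note $I(\overline b)\cong\Out_{\C_{\mathcal{F}}(u)/\langle u\rangle}(\C_D(u)/\langle u\rangle)$, and the canonical map $\Out_{\C_{\mathcal{F}}(u)}(\C_D(u))\to\Out_{\C_{\mathcal{F}}(u)/\langle u\rangle}(\C_D(u)/\langle u\rangle)$ is surjective straight from the definition of $\C_{\mathcal{F}}(u)/\langle u\rangle$. Its kernel is represented by automorphisms of $\C_D(u)$ fixing $u$ and acting trivially on $\C_D(u)/\langle u\rangle$, hence by the stability group of the chain $\C_D(u)\trianglerighteq\langle u\rangle\trianglerighteq 1$, which is a $p$-group; on the other hand $\Out_{\C_{\mathcal{F}}(u)}(\C_D(u))$ is a $p'$-group by the Sylow axiom applied to the maximal subgroup of the saturated fusion system $\C_{\mathcal{F}}(u)$. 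Therefore the kernel is trivial and the map is an isomorphism.

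Finally, the Cartan matrices. Since $\langle u\rangle$ is a central $p$-subgroup of $\C_G(u)$, it acts trivially on every simple module of $b$, so $\IBr(b)=\IBr(\overline b)$ under the identification coming from domination; in particular $l(b)=l(\overline b)$. To get $C=|\langle u\rangle|\overline C$, I would compare projective indecomposable $\mathcal{O}\C_G(u)$-modules $P_\varphi$ ($\varphi\in\IBr(b)$) with those of $\overline b$: the restriction of $P_\varphi$ to $\langle u\rangle$ is free, and $\overline P_\varphi:=P_\varphi/(\text{radical of }\mathcal{O}\langle u\rangle)P_\varphi$ is precisely the projective indecomposable module of $\overline b$ attached to $\varphi$, so $P_\varphi$ has $|\langle u\rangle|$ times the rank of $\overline P_\varphi$. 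Decomposing the ordinary character of $P_\varphi$ according to the action of $\langle u\rangle$, the trivial sector reproduces the decomposition matrix of $\overline b$; each of the remaining $|\langle u\rangle|-1$ sectors reproduces it as well, so the decomposition matrix of $b$ is, up to reordering rows, $|\langle u\rangle|$ copies of that of $\overline b$ stacked, whence $C=D^{\text{T}}D=|\langle u\rangle|\,\overline C$. The point that needs genuine care — and which I expect to be the main obstacle — is exactly this last step: showing that each non-trivial $\langle u\rangle$-sector of $\Irr(b)$ contributes a full copy of the decomposition matrix (equivalently, of the Gram matrix $\overline C$) of $\overline b$ even when $\langle u\rangle$ does not split off $\C_G(u)$. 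This is a standard property of blocks dominated through a central $p$-subgroup and can, if preferred, be cited rather than reproved.
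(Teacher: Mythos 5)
Most of your outline coincides with the paper's proof: the first claim is quoted from the literature (the paper cites \cite[Theorem~IV.3.19]{AKO}), the dominated block and its fusion system come from \autoref{fusiondom} applied to $b$ with $Z=\langle u\rangle\le\Z(\C_G(u))$, and your identification $\Aut_{\C_{\mathcal{F}}(u)}(\C_D(u))=\{\phi\in\Aut_{\mathcal{F}}(\C_D(u)):\phi(u)=u\}$ together with the surjectivity of $\Out_{\C_{\mathcal{F}}(u)}(\C_D(u))\to\Out_{\C_{\mathcal{F}}(u)/\langle u\rangle}(\C_D(u)/\langle u\rangle)$ and the ``$p$-group kernel inside a $p'$-group'' argument is a correct elementary replacement for the paper's citation of \cite[Theorem~6.3(i)]{Linckelmann}; that part is fine and even a bit more self-contained.

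The genuine gap is in your treatment of the Cartan matrices. The step you single out as crucial is false as stated: for a block dominated through a central $p$-subgroup, the decomposition matrix of $b$ is in general \emph{not} $|\langle u\rangle|$ stacked copies of the decomposition matrix of $\overline{b}$, and ``a full copy of the decomposition matrix'' is not equivalent to ``a full copy of the Gram matrix $\overline{C}$''. Take $\C_G(u)=G$ extraspecial of order $p^3$ and $u$ a generator of $\Z(G)$ (so $\langle u\rangle$ is fully centralized and $b=B$ is the unique block): the sector of $\Irr(b)$ lying over the trivial character of $\langle u\rangle$ consists of $p^2$ characters with decomposition number $1$, whereas each non-trivial sector consists of a single character with decomposition number $p$. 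What is true is only that each sector contributes the same Gram matrix, but the natural proof of that uses the orthogonality relations for generalized decomposition numbers attached to the central elements of $\langle u\rangle$, i.e.\ essentially the statement $C=|\langle u\rangle|\overline{C}$ itself, so it cannot be the engine of the argument. There are two clean repairs. Either cite the result outright: $l(b)=l(\overline{b})$ and $C=|\langle u\rangle|\overline{C}$ (together with the uniqueness of $\overline{b}$ and its defect group) is exactly \cite[Theorem~5.8.11]{Nagao}, and this is what the paper does. Or finish the module-theoretic argument you began and then abandoned: $P_\psi$ is free as an $F\langle u\rangle$-module, $\langle u\rangle$ acts trivially on every composition factor, and the filtration $P_\psi\supseteq IP_\psi\supseteq I^2P_\psi\supseteq\ldots$ by powers of the augmentation ideal $I\subseteq F\langle u\rangle$ has quotients $I^iP_\psi/I^{i+1}P_\psi\cong\overline{P_\psi}^{\dim(I^i/I^{i+1})}$ as modules for $\C_G(u)/\langle u\rangle$ (the conjugation action on $I^i/I^{i+1}$ is trivial since $\langle u\rangle$ is central, and freeness gives the dimension count), so counting composition factors yields $c_{\phi\psi}=|\langle u\rangle|\,\overline{c}_{\phi\psi}$ directly, with no reference to ordinary characters at all.
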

\begin{proof}
The first claim follows from \cite[Theorem~IV.3.19]{AKO}. The uniqueness of $\overline{b}$ and the claim about the Cartan matrices can be found in \cite[Theorem~5.8.11]{Nagao}. The fusion system of $\overline{b}$ was determined in \autoref{fusiondom}.
It is well-known that the inertial quotient $I(b)\cong\Out_{\C_{\mathcal{F}}(u)}(\C_D(u))$ is a $p'$-group. Thus, \cite[Theorem~6.3(i)]{Linckelmann} implies $\Out_{\C_{\mathcal{F}}(u)}(\C_D(u))\cong\Out_{\C_{\mathcal{F}}(u)/\langle u\rangle}(\C_D(u)/\langle u\rangle)\cong I(\overline{b})$. Finally, the isomorphism $I(b)\cong\C_{\Out_{\mathcal{F}}(\C_D(u))}(u)$ follows from the definition of $\C_\mathcal{F}(u)$.
\end{proof}

We also recall two important subgroups related to fusion systems.

\begin{Definition}
Let $\mathcal{F}$ be a fusion system on a finite $p$-group $P$. 
\begin{enumerate}[(i)]
\item $\mathfrak{foc}(\mathcal{F}):=\langle f(x)x^{-1}: x\in Q\le P,\ f\in\Aut_{\mathcal{F}}(Q)\rangle$ is called the \emph{focal subgroup} of $\mathcal{F}$.
\item $\Z(\mathcal{F}):=\{x\in P: x\text{ is fixed by every morphism in }\mathcal{F}\}$ is called the \emph{center} of $\mathcal{F}$.
\end{enumerate}
\end{Definition}

If $B$ is a block with fusion system $\mathcal{F}$ and defect group $D$, then we set $\mathfrak{foc}(B):=\mathfrak{foc}(\mathcal{F})$ (but $\Z(B)$ is usually used for the center of the block algebra). 
We say that $B$ is \emph{controlled} if all morphisms of $\mathcal{F}$ are generated by restrictions from $\Aut_{\mathcal{F}}(D)$. In this case, $\mathfrak{foc}(B)=[D,I(B)]$ and $\Z(\mathcal{F})=\C_D(I(B))$. If $D$ is abelian, then $B$ is controlled and 
$D=[D,I(B)]\oplus\C_D(I(B))$ (see \cite[Theorem~2.3]{Gorenstein}).

\section{Non-abelian defect groups}


\begin{Theorem}\label{main}
Let $B$ be a $p$-block of a finite group with non-abelian defect group $D$. 
Suppose that $D/\langle z\rangle$ is abelian of rank $2$ for some $z\in\Z(D)$.
Then $k(B)\le|D|$.
\end{Theorem}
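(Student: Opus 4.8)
The natural plan is to pass to a central subsection, reduce to an abelian-defect situation, and then apply the Cartan-matrix bounds of the earlier papers in the series.

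\textbf{Step 1: reduction.} Since $z\in\Z(D)$ we have $\C_D(z)=D$, so $\langle z\rangle$ is fully $\mathcal{F}$-centralized and $(z,b_z)$ is a \emph{major} $B$-subsection. The hypothesis forces $D'\le\langle z\rangle$, so $D'$ is cyclic and central and $D$ has rank at most $3$. By \autoref{fusion}, $b_z$ has defect group $D$ and fusion system $\mathcal{F}$, and it dominates a block $\overline{b_z}$ of $\C_G(z)/\langle z\rangle$ with abelian defect group $\overline{D}:=D/\langle z\rangle$ of rank $2$; moreover, if $\overline{b_z}$ has Cartan matrix $\overline{C}$, then $b_z$ has Cartan matrix $C_z:=|\langle z\rangle|\,\overline{C}$ and $l(b_z)=l(\overline{b_z})$.

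\textbf{Step 2: structure of $\overline{b_z}$.} Since $\overline{D}$ is abelian, $\overline{b_z}$ is a controlled block; its inertial quotient $E:=I(\overline{b_z})\cong I(b_z)\cong\C_{\Out_{\mathcal{F}}(D)}(z)$ is a $p'$-subgroup of $\Aut(\overline{D})$, where $\overline{D}\cong Z_{p^a}\times Z_{p^b}$ with $a,b\ge1$. I would use the structure theory of controlled blocks with abelian defect group together with the Cartan-matrix results of \cite{SambaleC3,WatanabeAWC} to determine $l(\overline{b_z})$ and $\overline{C}$ up to basic sets: after a transformation $\overline{C}\mapsto S\overline{C}S^{\text{T}}$ with $S\in\GL(l(\overline{b_z}),\mathbb{Z})$, the matrix $\overline{C}$ becomes block-diagonal with one block for each $E$-orbit $\mathcal{O}$ on $\Irr(\overline{D})$, that block being $|\overline{D}|/|\mathcal{O}|$ times the Cartan matrix of a (twisted) group algebra of the corresponding point stabilizer in $E$. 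In particular the elementary divisors of $\overline{C}$ divide $|\overline{D}|$, and in particular Brauer's Conjecture for $\overline{b_z}$ itself holds, $k(\overline{b_z})\le|\overline{D}|$.

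\textbf{Step 3: bounding $k(B)$.} Finally I would exploit the generalized decomposition numbers $d^z_{\chi\varphi}$ ($\chi\in\Irr(B)$, $\varphi\in\IBr(b_z)$) of the major subsection $(z,b_z)$: every row $(d^z_{\chi\varphi})_\varphi\in\mathbb{Z}[\zeta_{|\langle z\rangle|}]^{l(b_z)}$ is non-zero, and the orthogonality relations read $\sum_\chi d^z_{\chi\varphi}\,\overline{d^z_{\chi\psi}}=(C_z)_{\varphi\psi}$. Expanding the entries over an integral basis of $\mathbb{Z}[\zeta_{|\langle z\rangle|}]$ turns the left-hand side into a positive-definite integral quadratic form, and the associated lattice-point count bounds $k(B)$; feeding in the explicit shape of $C_z=|\langle z\rangle|\,\overline{C}$ from Step~2, the Brauer--Feit-type estimate of \cite{SambaleC3} should give $k(B)\le|\langle z\rangle|\cdot|\overline{D}|=|D|$. (When $l(b_z)=1$ one has simply $C_z=(|D|)$, and this is the known fact that a major subsection with $l=1$ already yields Brauer's Conjecture.)

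\textbf{Main obstacle.} The real work is the finite case analysis behind Steps~2 and 3: one must run through the $p'$-subgroups $E\le\Aut(Z_{p^a}\times Z_{p^b})$ and their orbit patterns on $\Irr(\overline{D})$, and in the small-prime cases — $p\in\{2,3\}$, where $E$ may be non-cyclic (for example $Q_8$, $\SL(2,3)$, $S_3$, or a dihedral group) and a coarse estimate such as $k(B)\le\tr(C_z)$ overshoots $|D|$ — one has to use the precise elementary divisors and the basic-set normal form of $\overline{C}$, not merely its trace, to push the bound down to $|D|$. The rank-$2$ hypothesis on $D/\langle z\rangle$ is exactly what keeps this list of inertial quotients short enough to handle exhaustively.
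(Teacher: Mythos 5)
Your Step 1 matches the paper's reduction, but Steps 2 and 3 contain a genuine gap. In Step 2 you propose to ``determine $l(\overline{b_z})$ and $\overline{C}$ up to basic sets'' from the orbit data of $E=I(\overline{b_z})$ on $\Irr(\overline{D})$, with block-diagonal pieces given by twisted group algebras of the stabilizers. For an arbitrary block with abelian defect group and arbitrary $p'$ inertial quotient this is not a theorem: it is essentially the Cartan-matrix shadow of Broué's abelian defect conjecture (or of Alperin's conjecture with multiplicities), and it is open; the results of \cite{SambaleC3,WatanabeAWC} you invoke only give such control under extra hypotheses (e.g.\ cyclic hyperfocal subgroup, free action, small inertial index), not for all $p'$-subgroups $E\le\Aut(Z_{p^a}\times Z_{p^b})$. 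Even $l(\overline{b_z})$ is unknown in general here, so the ``finite case analysis'' you defer to cannot currently be carried out, and Step 3, which needs the explicit shape of $C_z=|\langle z\rangle|\overline{C}$, inherits the same gap.

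The missing idea is precisely where the non-abelianness of $D$ enters --- your proposal never uses it, although it is the engine of the proof. Any $p'$-automorphism $\alpha$ centralizing $z$ fixes the nontrivial commutator $[x,y]\in\langle z\rangle$, which forces $\alpha$ to induce a determinant-one map on $\Omega(D/\langle z\rangle)\cong Z_p^2$; a $p'$-element of $\SL(2,p)$ with a nontrivial fixed point is unipotent, hence trivial, and then it is trivial on all of $D/\langle z\rangle$. Thus $I(\overline{b_z})\cong\C_{I(B)}(z)$ acts \emph{freely} on $\overline{D}$, so every nontrivial $\overline{b_z}$-subsection has inertial index $1$ and $l=1$, and Fujii's theorem \cite{DetCartan} gives $\det\overline{C}=|\overline{D}|$ with no knowledge of $l(\overline{b_z})$ or of the entries of $\overline{C}$. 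Feeding the major subsection $(z,b_z)$ with Cartan matrix $|\langle z\rangle|\overline{C}$ into \cite[Theorem~11]{SambaleC3} then yields $k(B)\le|D|$ directly. So the determinant condition obtained from the free action replaces your entire Step 2, and the case-by-case discussion of inertial quotients that you flag as the ``main obstacle'' (and which cannot be completed with known results) is avoided altogether.
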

\begin{proof}
Let $x,y\in D$ such that $D=\langle x,y,z\rangle$. Since $D$ is non-abelian, $1\ne [x,y]\in D'\subseteq\langle z\rangle$. Let $\alpha\in\C_{\Aut(D)}(z)$ be a $p'$-automorphism. We write $\alpha(x)\equiv x^iy^j\pmod{\langle z\rangle}$ and $\alpha(y)\equiv x^ky^l\pmod{\langle z\rangle}$ with $i,j,k,l\in\mathbb{Z}$. By \cite[III.1.2, III.1.3]{Huppert}, 
\[[x,y]=\alpha([x,y])=[x^iy^j,x^ky^l]=[x,y]^{il-jk}\]
and therefore $il-jk\equiv 1\pmod{p}$. Hence, $\alpha$ corresponds to a matrix with determinant $1$ under the isomorphism $\Aut(D/\langle x^p,y^p,z\rangle)\cong\Aut(Z_p^2)\cong\GL(2,p)$. If $x$ and $y$ have the same order modulo $\langle z\rangle$, then $\alpha$ also corresponds to a matrix with determinant $1$ under the isomorphism $\Aut(\Omega(D/\langle z\rangle))\cong\GL(2,p)$. Now assume, without loss of generality, that $x$ has larger order than $y$ modulo $\langle z\rangle$. Then $p\mid k$, since $\alpha(y)$ and $y$ have the same order. In particular $il\equiv 1\pmod{p}$. Let $p^n$ be the order of $x$ modulo $\langle z\rangle$. Then obviously, $\alpha(x^{p^{n-1}})\equiv x^{ip^{n-1}}\pmod{\langle z\rangle}$. This show that $\alpha$ induces an upper triangular matrix with determinant $1$ in $\Aut(\Omega(D/\langle z\rangle))$. Hence, in any case $\alpha$ corresponds to an element of $\SL(\Omega(D/\langle z\rangle))$. 

Now suppose that $\alpha$ has a non-trivial fixed point in $D/\langle z\rangle$. Then there is also a non-trivial fixed point in $\Omega(D/\langle z\rangle)$. It follows that $\alpha$ is conjugate to a unitriangular matrix under $\Aut(\Omega(D/\langle z\rangle))\cong\GL(2,p)$. However, then $\alpha$ acts trivially on $\Omega(D/\langle z\rangle)$, since $\alpha$ is a $p'$-element. By \cite[Theorem~5.2.4]{Gorenstein}, $\alpha$ also acts trivially on $D/\langle z\rangle$. This forces $\alpha=1$ by \cite[Theorem~5.3.2]{Gorenstein}. Therefore we have shown that every $p'$-automorphism of $\C_{\Aut(D)}(z)$ acts freely on $D/\langle z\rangle$.

Now let $\mathcal{F}$ be the fusion system of $B$.
Let $(z,b_z)$ be a (major) subsection of $B$. Since $z\in\Z(D)$, the subgroup $\langle z\rangle$ is fully $\mathcal{F}$-centralized. By \autoref{fusion}, $b_z$ dominates a block $\overline{b_z}$ of $G/\C_G(z)$ with abelian defect group $\overline{D}:=D/\langle z\rangle$ and inertial quotient $I(\overline{b_z})\cong\C_{I(B)}(z)$. 
As we have seen above, $I(\overline{b_z})$ acts freely on $\overline{D}$. In particular, all non-trivial $\overline{b_z}$-subsections $(u,\beta_u)$ have inertial index $1$. This implies $l(\beta_u)=1$, since $\overline{D}$ is abelian (see \cite[Theorem~V.9.13]{Feit}). Let $\overline{C}$ be the Cartan matrix of $\overline{b_z}$.
Then we deduce from a result of Fujii~\cite[Corollary~1]{DetCartan} that $\det\overline{C}=|\overline{D}|$. Since $|\langle z\rangle|\overline{C}$ is the Cartan matrix of $b_z$, the claim follows from \cite[Theorem~11]{SambaleC3}. 
\end{proof}

\begin{Corollary}\label{mna}
Brauer's $k(B)$-Conjecture holds for all blocks with minimal non-abelian defect groups.
\end{Corollary}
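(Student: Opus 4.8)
The plan is to obtain this as an immediate consequence of \autoref{main} together with the elementary structure theory of minimal non-abelian $p$-groups. So let $B$ be a $p$-block of a finite group with minimal non-abelian defect group $D$. By hypothesis $D$ is non-abelian, which is already the first requirement of \autoref{main}; it remains to exhibit some $z\in\Z(D)$ with $D/\langle z\rangle$ abelian of rank $2$.

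First I would record the two standard facts about a minimal non-abelian $p$-group $D$: it is generated by two elements, and $D'$ is cyclic and contained in $\Z(D)$. For the first, if $d(D)\ge3$ and $[x,y]\ne1$ for some $x,y\in D$, then $\langle x,y\rangle$ is a non-abelian proper subgroup (proper since its image in $D/\Phi(D)$ has rank at most $2<d(D)$), contradicting minimality; as $D$ is non-abelian it is non-cyclic, so $d(D)=2$. For the second, one checks $\Phi(D)=\Z(D)$: every element outside $\Phi(D)$ lies in some maximal subgroup and all maximal subgroups are abelian, so $\Phi(D)\le\Z(D)$; conversely a central element outside $\Phi(D)$ would, together with a suitable second element, generate $D$ and force $D$ to be abelian. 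Hence $|D/\Z(D)|=|D/\Phi(D)|=p^2$, so $D'\le\Z(D)$, and writing $D=\langle a,b\rangle$ we get $D'=\langle[a,b]\rangle$ because $[a,b]$ is now central. (Equivalently, one may invoke R\'edei's classification of minimal non-abelian $p$-groups, from which $|D'|=p$, $D'\le\Z(D)$ and $d(D)=2$ are immediate in each case.)

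Finally, put $\langle z\rangle:=D'$. Then $z\in\Z(D)$ and $D/\langle z\rangle=D/D'$ is abelian; since $D'\le\Phi(D)$, the group $D/\Phi(D)$ is a quotient of $D/D'$, so the rank of $D/\langle z\rangle$ satisfies $2=d(D/\Phi(D))\le d(D/D')\le d(D)=2$, i.e.\ $D/\langle z\rangle$ is abelian of rank $2$. Now \autoref{main} applies and yields $k(B)\le|D|$. I do not expect any real obstacle here: beyond the classical description of minimal non-abelian $p$-groups (two generators, cyclic central commutator subgroup), the assertion is a direct specialization of \autoref{main}, and no individual family of such groups needs separate treatment.
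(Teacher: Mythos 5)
Your argument is correct and matches the paper's own proof in essence: both show $\Phi(D)\subseteq\Z(D)$ using that every element lies in an abelian maximal subgroup, observe that $D$ is generated by two non-commuting elements so that $D'=\langle[x,y]\rangle$ is central, and then apply \autoref{main} with $\langle z\rangle=D'$. The only difference is cosmetic (you additionally verify $\Z(D)=\Phi(D)$ and $d(D)=2$ explicitly, which the paper gets more directly), so nothing further is needed.
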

\begin{proof}
The minimal non-abelian $p$-groups were classified by Rédei (see \cite[Aufgabe~III.7.22]{Huppert}), but the present proof can go without a detailed structure knowledge. 
Let $D$ be a minimal non-abelian defect group of a block $B$. Then there are non-commuting elements $x,y\in D$. Since $\langle x,y\rangle$ is non-abelian, we have $D=\langle x,y\rangle$. Now let $u\in\Phi(D)$ and $v\in D$ be arbitrary. Then $v$ lies in a maximal subgroup $M<D$ and so does $u$. Since $M$ is abelian, it follows that $[u,v]=1$. This shows that $\Phi(D)\subseteq\Z(D)$. In particular $z:=[x,y]\in D'\subseteq\Phi(D)\subseteq\Z(D)$. Since $D/\langle z\rangle$ is abelian of rank $2$, the claim follows from \autoref{main}.
\end{proof}

\autoref{mna} includes the non-abelian defect groups of order $p^3$. In particular, this extends results by Hendren~\cite[Theorem~4.10]{Hendren1}. Apart from minimal non-abelian groups, \autoref{main} also applies to other groups like the central product $D_8\mathop{\ast}Z_{2^n}$ for some $n\ge 2$ where $D_8$ is the dihedral group of order $8$.

In \cite[Corollary~1]{SambalekB2} we have proved that Brauer's $k(B)$-Conjecture holds for the $3$-blocks of defect $3$. Now we can do the same for $p=5$.

\begin{Corollary}
Brauer's $k(B)$-Conjecture holds for the $5$-blocks of defect at most $3$.
\end{Corollary}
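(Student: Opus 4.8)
The plan is to argue by cases according to the isomorphism type of a defect group $D$ of the $5$-block $B$, using only that $|D|$ divides $5^3$. If $|D|\le 5$, then $D$ is trivial or cyclic of order $5$ and the assertion is classical (for $|D|=1$ it is obvious, and blocks with cyclic defect groups were settled by Dade). If $|D|=125$ and $D$ is non-abelian, then every proper subgroup of $D$ has order at most $25=5^2$ and is therefore abelian, so $D$ is minimal non-abelian and $k(B)\le|D|$ follows at once from \autoref{mna}; concretely $D$ is then one of the two extraspecial groups of order $5^3$.

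It remains to treat the abelian defect groups of order $25$ and $125$, namely $Z_{25}$, $Z_5^2$, $Z_{125}$, $Z_{25}\times Z_5$ and $Z_5^3$. The cyclic cases are again covered by Dade's theory, the case $Z_5^2$ is classical (Brauer's Conjecture is known for defect groups of order $p^2$), and $Z_{25}\times Z_5$ is metacyclic, where Brauer's Conjecture is known as well; equivalently, none of these groups has an elementary abelian direct summand of order $5^3$, so all of them fall under the results on abelian defect groups recalled in the introduction. The single genuinely open case is $D\cong Z_5^3$, which I regard as the heart of the statement.

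For $D\cong Z_5^3$ the block $B$ is controlled and $I(B)$ embeds as a $p'$-subgroup of $\Aut(D)\cong\GL(3,5)$, so that $|I(B)|$ divides $2^7\cdot 3\cdot 31$. The approach is to run through the conjugacy classes of such subgroups. If $I(B)$ acts freely on $D$, then all non-trivial $B$-subsections are nilpotent, the Cartan matrix $C$ of $B$ satisfies $\det C=|D|$ by Fujii's formula, and \cite[Theorem~11]{SambaleC3} gives $k(B)\le|D|$ exactly as in the proof of \autoref{main}. In general, for a non-trivial subsection $(u,b_u)$ the subgroup $\langle u\rangle$ is fully centralized (as $D$ is abelian), so by \autoref{fusion} $b_u$ dominates a block $\overline{b_u}$ with defect group $\C_D(u)/\langle u\rangle\cong Z_5^2$ and inertial quotient $\C_{I(B)}(u)$, and for blocks with defect group $Z_5^2$ the invariant $l=l(\overline{b_u})=l(b_u)$ is known in every case. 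Feeding these values into the subsection formula $k(B)=\sum_{(u,b_u)}l(b_u)$ --- where the number of summands is governed by the orbits of $I(B)$ on $D$ --- together with a bound on $l(B)$ extracted from the Cartan matrix via \cite[Theorem~11]{SambaleC3}, one checks $k(B)\le 125$ case by case. I expect the main obstacle to be the bookkeeping: there are quite a few conjugacy classes of inertial quotients in $\GL(3,5)$ that do not act freely, and in the subcases with $l(B)>1$ one must pin down $l(B)$ from the fusion system --- precisely the point where the finer results on Cartan matrices, or the improved abelian-defect statement for small primes, become indispensable.
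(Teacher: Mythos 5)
Your case division agrees with the paper: there is no non-abelian group of order $5$ or $25$, a non-abelian group of order $5^3$ is minimal non-abelian, so \autoref{mna} disposes of the non-abelian case exactly as in the paper, and the abelian groups other than $Z_5^3$ are covered by known results. The problem is that for the case you yourself identify as "the heart of the statement", $D\cong Z_5^3$, you do not give a proof but only a programme: enumerate the $5'$-subgroups $I(B)\le\GL(3,5)$, compute $k(B)=\sum l(b_u)$ over subsections, and bound $l(B)$ via \cite[Theorem~11]{SambaleC3}. This is not carried out, and as stated it would not go through: your key input, namely that $l(\overline{b_u})$ "is known in every case" for blocks with defect group $Z_5^2$, is not available for $p=5$ (unlike the situation for $Z_3^2$, where Kiyota and Watanabe determined these invariants); consequently the "subsection formula" cannot be evaluated in all subcases, and pinning down $l(B)$ from the fusion system is precisely the open part. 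You also cannot appeal to \autoref{elab}, since $Z_5^3$ \emph{is} an elementary abelian direct summand of order $p^3$.

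The paper closes this case differently and without any enumeration of inertial quotients: the abelian defect groups of order at most $5^3$ are covered by \cite[Theorem~14.17]{habil}, respectively by \autoref{p35}, which applies to every abelian $D$ of order $5^3$ because an elementary abelian direct summand of order $5^4$ cannot occur. The proof of \autoref{p35} for $p=5$ does not need exact values of $l(b_u)$; instead one finds $x_1\in D_1$ whose stabilizer image $\C_{I(B)}(x_1)/\C_{I(B)}(D_1)$ in $\GL(3,5)$ has order at most $4$ or is isomorphic to $S_3$, and then applies \cite[Lemma~14.5]{habil}, i.e.\ a quadratic-form bound attached to one well-chosen subsection rather than a complete bookkeeping over all of them. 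To repair your argument you would either have to supply such an orbit/stabilizer argument for $Z_5^3$ yourself, or cite the existing result for abelian defect groups of order at most $5^3$ as the paper does.
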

\begin{proof}
The abelian defect groups of order at most $5^3$ have been handled in \cite[Theorem~14.17]{habil} (see also \autoref{p35} below). In the non-abelian case, \autoref{mna} applies.
\end{proof}

Out next results concerns a larger class of $p$-groups, but introduces restrictions on $p$. The proof makes use of a recent result by Watanabe~\cite{WatanabeAWC}.

\begin{Theorem}
Let $p\le 5$, and let $B$ be a $p$-block of a finite group with defect group $D$. Suppose that $D/\langle z\rangle$ is metacyclic for some $z\in\Z(D)$. Then $k(B)\le|D|$.
\end{Theorem}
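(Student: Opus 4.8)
The plan is to follow the same strategy as in the proof of \autoref{main}, but now working with the metacyclic quotient $D/\langle z\rangle$ instead of a rank-$2$ abelian one. First I would pick a $p'$-automorphism $\alpha\in\C_{\Aut(D)}(z)$ and analyze how it acts on $\overline{D}:=D/\langle z\rangle$. The key structural input is that $\overline{D}$ is metacyclic, so it has a normal cyclic subgroup $N$ with cyclic quotient $\overline{D}/N$. Any $p'$-automorphism that commutes with $z$ permutes the characteristic subgroups of $\overline{D}$; in particular it stabilizes a suitable cyclic normal subgroup, and hence induces, on the relevant sections, automorphisms of cyclic $p$-groups. The aim is to show that if $\alpha$ has a non-trivial fixed point on $\overline{D}$ then $\alpha=1$, i.e.\ that every non-trivial $p'$-automorphism in $\C_{\Aut(D)}(z)$ acts freely on $\overline{D}$. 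For metacyclic $p$-groups with $p$ odd this is essentially known (a coprime automorphism acting trivially on $\Omega(\overline D)$ acts trivially on $\overline D$, using \cite[Theorem~5.2.4, Theorem~5.3.2]{Gorenstein} as in the previous proof), and for $p=2$ the restriction $p\le 5$ together with Watanabe's result \cite{WatanabeAWC} is presumably what makes the $2$-adic case go through.

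Next, once the free action is established, the argument runs exactly as before. Let $\mathcal{F}$ be the fusion system of $B$ and let $(z,b_z)$ be the major subsection with $\langle z\rangle$ fully $\mathcal{F}$-centralized (possible since $z\in\Z(D)\le\Z(\mathcal F)$ is automatic because $z$ is fixed by everything that commutes with it — more precisely $\langle z\rangle$ is fully centralized because $z$ is central in $D$). By \autoref{fusion}, $b_z$ dominates a block $\overline{b_z}$ of $\C_G(z)/\langle z\rangle = G/\C_G(z)$-type quotient with abelian\,—\,here metacyclic\,—\,defect group $\overline D$ and inertial quotient $I(\overline{b_z})\cong\C_{I(B)}(z)$, which by the first step acts freely on $\overline D$. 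Hence every non-trivial $\overline{b_z}$-subsection $(u,\beta_u)$ has inertial index $1$, so $l(\beta_u)=1$.

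At this point the metacyclic (rather than abelian of rank $2$) hypothesis means $\overline D$ need not be abelian, so I cannot directly quote \cite[Theorem~V.9.13]{Feit} and Fujii's determinant formula as stated. Instead the plan is to invoke Watanabe's recent theorem \cite{WatanabeAWC} on Cartan matrices of blocks with metacyclic defect groups: it should give that the Cartan matrix $\overline C$ of $\overline{b_z}$ has $\det\overline C=|\overline D|$ (or the appropriate elementary-divisor statement), at least under the constraint $p\le 5$. Multiplying by $|\langle z\rangle|$ shows $b_z$ has Cartan matrix $|\langle z\rangle|\overline C$ with determinant $|\langle z\rangle|^{l(b_z)}|\overline D|$, and then \cite[Theorem~11]{SambaleC3} applied to the major subsection $(z,b_z)$ yields $k(B)\le|D|$.

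The main obstacle I anticipate is the first step: verifying that a coprime automorphism centralizing $z$ and having a fixed point on the metacyclic group $\overline D$ must be trivial. Metacyclic $2$-groups are considerably more varied than rank-$2$ abelian groups (they include modular, semidihedral, and quaternion-type quotients), so the clean $\GL(2,p)$ argument from \autoref{main} does not transcribe verbatim; one must instead argue via the characteristic cyclic series and the action on $\Omega_1$, and it is precisely here that the hypothesis $p\le 5$ (and the availability of Watanabe's classification-type result for small primes) is doing real work. A secondary technical point is making sure $\langle z\rangle$ is genuinely fully $\mathcal F$-centralized and that \autoref{fusion} applies with the stated inertial-quotient isomorphism; this should be routine given $z\in\Z(D)$.
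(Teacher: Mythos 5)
Your reduction steps are fine and agree with the paper (the case $p=2$ is quoted from \cite[Theorem~13.8]{habil}, abelian $D$ from \cite[Theorems~14.16, 14.17]{habil}, and the case of an abelian quotient $D/\langle z\rangle$ from \autoref{main}; the paper also disposes of $p=3$ with non-abelian quotient by \cite[Proposition~8.16]{habil}), but your plan for the one genuinely new case --- $p=5$ with $\overline{D}:=D/\langle z\rangle$ non-abelian metacyclic --- rests on a first step that is simply false. For a non-abelian metacyclic $5$-group the fusion system of $\overline{b_z}$ is controlled by Stancu \cite{Stancu}, and Sasaki's description of the possible $p'$-automorphism groups \cite{Sasaki} shows that $\mathfrak{foc}(\overline{b_z})=[\overline{D},I(\overline{b_z})]$ is \emph{cyclic}. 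A non-trivial coprime automorphism $\alpha$ acting freely would force $[\overline{D},\alpha]=\overline{D}$ (coprime action with $\C_{\overline{D}}(\alpha)=1$), which is non-cyclic --- so a non-trivial inertial quotient always has non-trivial fixed points on $\overline{D}$, and the free-action dichotomy you hope to salvage from \autoref{main} cannot hold. Moreover, even if it did, the next step collapses: $\overline{D}$ is non-abelian, so inertial index $1$ of a non-trivial $\overline{b_z}$-subsection no longer yields $l(\beta_u)=1$ via \cite[Theorem~V.9.13]{Feit}, and Fujii's determinant formula \cite{DetCartan} is not available; and Watanabe's theorem \cite{WatanabeAWC} does \emph{not} give $\det\overline{C}=|\overline{D}|$.

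What \cite{WatanabeAWC} actually provides (because the hyperfocal/focal subgroup is cyclic) is $l(b_z)=l(\overline{b_z})\mid 4$ together with the elementary divisor structure of the Cartan matrix $C$ of $b_z$: the divisors are $5^a$ and $|D|$, with $|D|$ of multiplicity one and $a\ge n$ possibly strictly larger than $n$ where $|\langle z\rangle|=5^n$. Because of this, one cannot simply feed $C$ into \cite[Theorem~11]{SambaleC3} as you propose. The paper's actual argument for $l(b_z)=4$ is a bespoke one: handle $l(b_z)\le 2$ by \cite[Theorem~4.9]{habil}; for $l(b_z)=4$ choose a basic set in which $C$ is block diagonal with an indecomposable block $C_1$ containing the elementary divisor $|D|$, show by a Galois-trace computation on the generalized decomposition numbers $d^z_{\chi\phi}$ that a hypothetical character violating the $k(B)$-bound must have height zero and must be supported on the $C_1$-part, and then bound the minimum of the form $x|D|C_1^{-1}x^{\text{T}}$ by factoring $5^{-a}C_1=Q_1^{\text{T}}Q_1$ via Mordell's theorem \cite{Mordell} and applying \cite[Lemma~4]{SambaleC3}. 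Mordell's splitting theorem is valid only in dimension at most $4$ (the paper exhibits a $6$-dimensional counterexample), and this --- not any free-action phenomenon --- is where the hypothesis $p\le 5$ does its real work. So the proposal has a genuine gap precisely at the case the theorem was designed to cover.
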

\begin{proof}
The case $p=2$ is already known (see \cite[Theorem~13.8]{habil}). Thus, let $p\in\{3,5\}$.
If $D$ is abelian, then the rank of $D$ is at most $3$ and the result follows from \cite[Theorems~14.16 and 14.17]{habil}. Now assume that $D$ is non-abelian. If $D/\langle z\rangle$ is abelian, then \autoref{main} applies.
Thus, we may assume that $D/\langle z\rangle$ is non-abelian. If $p=3$, then the claim follows from \cite[Proposition~8.16]{habil}. Therefore, let $p=5$. Let $(z,b_z)$ be a $B$-subsection. As before, $b_z$ dominates a block $\overline{b_z}$ with non-abelian, metacyclic defect group $D/\langle z\rangle$. 
By a result of Stancu~\cite{Stancu} the fusion system $\overline{\mathcal{F}_z}$ of $\overline{b_z}$ is controlled. 
Moreover, the possible automorphism groups $I(\overline{b_z})$ are described in a paper by Sasaki~\cite{Sasaki}. It follows that $\mathfrak{foc}(\overline{b_z})=[D/\langle z\rangle,I(\overline{b_z})]$ is cyclic (for details see \cite[proof of Theorem~8.8]{habil}). 
Hence, by the main result of \cite{WatanabeAWC}, $l(b_z)=l(\overline{b_z})\mid 4$. In case $l(b_z)\le 2$, the claim follows from \cite[Theorem~4.9]{habil}. Finally, let $l(b_z)=4$. Let $|\langle z\rangle|=5^n$, and let $C$ be the Cartan matrix of $b_z$. 
By \cite[Corollary on p.181]{WatanabeAWC}, $C$ has elementary divisors $5^a$ and $|D|$ where $|D|$ occurs with multiplicity $1$ and $a\ge n$. 
Choose a basic set such that $C$ has block form
\[C=\begin{pmatrix}
C_1&0\\0&C_2
\end{pmatrix}\]
where $C_1\in\mathbb{Z}^{r\times r}$ does not split further (for any basic set) and $r\le4$ (possibly $r=4$). Without loss of generality, $|D|$ is an elementary divisor of $C_1$. 
By way of contradiction, we may assume that there is a vector $0\ne x\in\mathbb{Z}^4$ such that $x|D|C^{-1}x^{\text{T}}<4$ (see \cite[Theorem~V.9.17]{Feit}). Looking into the proof of \cite[Theorem~V.9.17]{Feit} more closely, reveals that there is a character $\chi\in\Irr(B)$ such that the row of generalized decomposition numbers $d_\chi:=(d^u_{\chi\phi}:\phi\in\IBr(b_z))$ satisfies 
\[\tr\bigl(d_\chi|D|C^{-1}\overline{d_\chi}^{\text{T}}\bigr)<4[\mathbb{Q}(\zeta):\mathbb{Q}]=16\cdot 5^{n-1}\] 
where $\zeta$ is a primitive $5^n$-th root of unity and $\tr$ is the trace of the Galois extension $\mathbb{Q}(\zeta)|\mathbb{Q}$. We may write $d_\chi=(d_1,d_2)$ where $d_1\in\mathbb{C}^r$ and $d_2\in\mathbb{C}^{4-r}$. Then \[\tr\bigl(d_\chi|D|C^{-1}\overline{d_\chi}^{\text{T}}\bigr)=\tr\bigl(d_1|D|C_1^{-1}\overline{d_1}^{\text{T}}\bigr)+\tr\bigl(d_2|D|C_2^{-1}\overline{d_2}^{\text{T}}\bigr).\]
Since all entries of $|D|C_2^{-1}$ are divisible by $5$, it follows that $\tr\bigl(d_2|D|C_2^{-1}\overline{d_2}^{\text{T}}\bigr)\ge 5\phi(5^n)=20\cdot 5^{n-1}$ or $\tr\bigl(d_2|D|C_2^{-1}\overline{d_2}^{\text{T}}\bigr)=0$. The first case is impossible. Hence, $d_2=0\in\mathbb{Z}^{4-r}$. Since $d_\chi$ consists of algebraic integers, we may write
\[d_\chi=\sum_{i=0}^{\phi(5^n)-1}{a_i\zeta^i}\]
for some $a_i\in\mathbb{Z}^4$. Let us write $Q(x,y):=x|D|C^{-1}\overline{y}^\text{T}$ for $x,y\in\mathbb{C}^4$. Then $Q$ is a positive definite Hermitian form. 
Moreover,
\[\alpha:=Q(d_\chi,d_\chi)=a_0^*+\sum_{i=1}^{2\cdot 5^{n-1}-1}{a_i^*(\zeta^i+\zeta^{-i})}\]
for some $a_i^*\in\mathbb{Z}$. Since $\zeta^{2\cdot 5^{n-1}}+\zeta^{-2\cdot 5^{n-1}}=-1-\zeta^{5^{n-1}}-\zeta^{-5^{n-1}}$, we get
\[a_0^*=\sum_{i=0}^{\phi(5^n)-1}{Q(a_i,a_i)}-\sum_{\substack{0\le s<t<\phi(5^n),\\t-s\equiv \pm 2\cdot5^{n-1}\pmod{5^n}}}{Q(a_s,a_t)}>0\]
and
\[a_{5^{n-1}}^*=\sum_{\substack{0\le s<t<\phi(5^n),\\t-s\equiv 5^{n-1}\pmod{5^n}}}{Q(a_s,a_t)}-\sum_{\substack{0\le s<t<\phi(5^n),\\t-s\equiv \pm 2\cdot5^{n-1}\pmod{5^n}}}{Q(a_s,a_t)}.\]

Suppose for the moment that $\chi$ has positive height. Then the $5$-adic valuation of $\alpha$ is strictly larger than $1$ (see \cite[Proposition~1.36]{habil}). In particular, $\alpha/5$ is an algebraic integer (this can be seen by going over to the cyclotomic field over the $5$-adic numbers, see \cite[Proposition~II.7.13]{NeukirchE}). Since $1$, $\zeta+\zeta^{-1},\ldots,\zeta^{2\cdot 5^{n-1}-1}+\zeta^{-2\cdot 5^{n-1}+1}$ is a basis for the ring of real algebraic integers, we have $5\mid a_i^*$ for all $i$. Moreover,
\[\tr(\alpha)=a_0^*\phi(5^n)+\sum_{i=1}^{2\cdot 5^{n-1}-1}{a_i^*\tr(\zeta^i+\zeta^{-i})}=a_0^*\phi(5^n)-2\cdot5^{n-1}a_{5^{n-1}}^*.\]
If $a_{5^{n-1}}^*\le0$, then we obtain the contradiction $\tr(\alpha)\ge a_0^*\phi(5^n)\ge 20\cdot 5^{n-1}$. Thus, $a_{5^{n-1}}^*>0$. Observe that
\begin{align*}
a_{5^{n-1}}^*&\le \frac{1}{2}\sum_{i=0}^{5^{n-1}-1}{Q(a_i,a_i)}+\sum_{i=5^{n-1}}^{3\cdot5^{n-1}-1}{Q(a_i,a_i)}+\frac{1}{2}\sum_{i=3\cdot 5^{n-1}}^{4\cdot 5^{n-1}-1}{Q(a_i,a_i)}-\sum_{\substack{0\le s<t<\phi(5^n),\\t-s\equiv \pm 2\cdot5^{n-1}\pmod{5^n}}}{Q(a_s,a_t)}\\ 
&=a_0^*-\frac{1}{2}\sum_{i=0}^{5^{n-1}-1}{Q(a_i,a_i)}-\frac{1}{2}\sum_{i=3\cdot 5^{n-1}}^{4\cdot 5^{n-1}-1}{Q(a_i,a_i)}.
\end{align*}
Now it is easy to see that $a_0^*>a_{5^{n-1}}^*$ and thus $a_0^*\ge a_{5^{n-1}}^*+5$. This gives the contradiction $\tr(\alpha)\ge 20\cdot 5^{n-1}+2\cdot 5^{n-1}a_{5^{n-1}}^*\ge 20\cdot 5^{n-1}$. Therefore, we have shown that $\chi$ has height $0$. 

In particular, $d_\chi|D|C^{-1}\overline{d_\psi}^\text{T}\ne 0$ for all $\psi\in\Irr(B)$ (see \cite[Proposition~1.36]{habil}). Since $d_2=0$, it follows that the first $r$ components of $d_\psi$ cannot all be zero. Hence, in order to bound $k(B)$ by the number of rows $d_\psi$, we may work with the matrix $C_1$ instead of $C$. This means it suffices to show
\[\min\{x|D|C_1^{-1}x^\text{T}:0\ne x\in\mathbb{Z}^r\}\ge r\]
(cf. \cite[Prop.~2.2]{Plesken}).

The integral matrix $\overline{C_1}:=5^{-a}C_1$ has elementary divisors $1$ and $5^{-a}|D|$ where $5^{-a}|D|$ occurs with multiplicity $1$. In particular, $\det\overline{C_1}=5^{-a}|D|$.
Since $r\le 4$, it is known that $\overline{C_1}$ can be factorized in the form
\[\overline{C_1}=Q_1^{\text{T}}Q_1\]
where $Q_1\in\mathbb{Z}^{k\times r}$ for some $k\in\mathbb{N}$ (see \cite{Mordell}). We may assume that $Q_1$ has no vanishing rows. 
By the choice of $C_1$, the matrix $Q_1$ is indecomposable with the notation of \cite[Definition~1]{SambaleC3}. Now \cite[Lemma~4]{SambaleC3} implies 
\[\min\{x|D|C_1^{-1}x^\text{T}:0\ne x\in\mathbb{Z}^r\}=\min\{\det(\overline{C_1})x\overline{C_1}^{-1}x^\text{T}:0\ne x\in\mathbb{Z}^r\}\ge r.\]
This completes the proof.
\end{proof}

Most parts of the proof above also work for any odd prime $p$. However, the splitting theorem by Mordell~\cite{Mordell} is no longer true for matrices of larger dimension. Consider for example the following situation: $p=7$, $z=1$, $l(B)=6$ and 
\[C=C_1=7^2\begin{pmatrix}
3&.&1&.&.&.\\
.&2&.&1&.&.\\
1&.&2&1&.&.\\
.&1&1&2&1&.\\
.&.&.&1&2&1\\
.&.&.&.&1&2
\end{pmatrix}
\]
(the matrix is a modified version of the $E_6$ lattice). Then $\det(7^{-2}C)=7$ and there is no factorization of the form $7^{-2}C=Q^\text{T}Q$ for some integral matrix $Q$. In fact 
\[\min\{x7^3C^{-1}x^\text{T}:0\ne x\in\mathbb{Z}^6\}=4<6.\]
However, we do not know if $C$ can actually occur as a Cartan matrix of a block.


\section{Abelian defect groups}

We begin with a remark about a theorem of Watanabe~\cite{Watanabe1}.

\begin{Lemma}\label{kBdom}
Let $B$ be a $p$-block of a finite group $G$ with abelian defect group, and let $Z$ be a central $p$-subgroup of $G$. Then $k(B)=|Z|k(\overline{B})$ where $\overline{B}$ is the unique block of $G/Z$ dominated by $B$.
\end{Lemma}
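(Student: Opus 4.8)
The plan is to compare the irreducible characters of $B$ with those of $\overline{B}$ through the domination process, using the abelian-defect-group hypothesis to control the generalized decomposition numbers. First I would recall from \autoref{fusiondom} (applied with the central $p$-subgroup $Z$) that $B$ dominates a unique block $\overline{B}$ of $G/Z$ with defect group $D/Z$ and fusion system $\mathcal{F}/Z$; in particular $D/Z$ is again abelian. The characters in $\Irr(\overline{B})$ inflate to a subset of $\Irr(B)$ (those $\chi\in\Irr(B)$ with $Z\le\Ker\chi$), so one always has $k(\overline{B})\le k(B)$, and the real content is the reverse direction together with the exact factor $|Z|$. For this I would set up a subsection count: for a major subsection $(z,b_z)$ with $z\in Z$ a generator (or more systematically, running over a transversal), \autoref{fusion} gives that $b_z$ has defect group $\C_D(z)=D$ (since $z$ is central) and dominates a block $\overline{b_z}$ of $\C_G(z)/\langle z\rangle=G/\langle z\rangle$ with $l(b_z)=l(\overline{b_z})$ and Cartan matrix $|\langle z\rangle|\overline{C}$.

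The key computational step is the formula $k(B)=\sum_{(u,b_u)} l(b_u)$ where the sum is over a set of representatives of $B$-subsections, valid because $D$ is abelian (so every subsection contributes, and Brauer's conjecture-type counting applies — see \cite[Theorem~V.9.13]{Feit} and the surrounding theory). I would partition the subsections of $B$ according to whether $u\in Z$ or not: writing a subsection of $B$ as $(u,b_u)$ with $u\in D$, the subsections with $u\in Z$ are exactly the major ones supported on $Z$, and for each such $u$ one has $l(b_u)=l(\overline{b_u})$ where $\overline{b_u}$ is a subsection-type block for $\overline{B}$; meanwhile the subsections of $\overline{B}$ are parametrized by $D/Z$. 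The crucial bookkeeping is that each subsection $(\bar u, \overline{b_{\bar u}})$ of $\overline{B}$ pulls back to exactly $|Z|$ subsections $(u,b_u)$ of $B$ with $u$ in the coset $\bar u$ (using that $Z$ is central, so $Z$ acts trivially and these cosets really do split into $|Z|$ genuine $B$-subsections each with the same $l$-value as $\overline{b_{\bar u}}$, via \autoref{fusion} applied along the chain). Summing gives
\[
k(B)=\sum_{(u,b_u)} l(b_u)=\sum_{(\bar u,\overline{b_{\bar u}})}|Z|\,l(\overline{b_{\bar u}})=|Z|\sum_{(\bar u,\overline{b_{\bar u}})} l(\overline{b_{\bar u}})=|Z|\,k(\overline{B}).
\]

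The main obstacle I anticipate is making the bijection between $B$-subsections lying over a fixed coset of $Z$ and the $|Z|$-element fibre genuinely precise — one must check that distinct elements $u,u'$ in the same coset $zZ$ give $B$-subsections $(u,b_u)$ and $(u',b_{u'})$ that are not $G$-conjugate (they are not, since $G$-conjugation of such subsections descends to $\overline{G}$-conjugation of the trivial element in $Z$, which is rigid because $Z$ is central), and that the associated dominated blocks all coincide with the same $\overline{b_{\bar u}}$ so that $l(b_u)=l(\overline{b_{\bar u}})$ is independent of the representative. An alternative, cleaner route that avoids the subsection arithmetic entirely: observe that $\Irr(B)$ decomposes under the action of the group of linear characters $\widehat Z=\Irr(Z)$ (viewed via $Z\le\Z(G)$), every $B$-character lies over a single $Z$-central character $\lambda\in\widehat Z$, the characters over the trivial $\lambda$ are precisely $\Irr(\overline{B})$, and twisting by $\widehat Z$ permutes these $|Z|$ "layers" bijectively while preserving block membership (since $Z$ is a $p$-group and $B$ is the unique block covering each layer) — giving $k(B)=|\widehat Z|\,k(\overline{B})=|Z|\,k(\overline{B})$ directly. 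I would likely present this second argument as the main line and remark that it matches Watanabe's count in \cite{Watanabe1}.
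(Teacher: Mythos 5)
Both of your routes have gaps, and the one you say you would present as the main line is the more serious. There is no ``twisting'' action of $\widehat{Z}=\Irr(Z)$ on $\Irr(G)$: a linear character of a central subgroup does not in general extend to $G$, so the map $\chi\mapsto\lambda\chi$ is simply undefined, and nothing in your sketch produces a substitute that permutes the $|Z|$ layers of $\Irr(B)$ while preserving membership in $B$. Tellingly, this argument never uses that $D$ is abelian, yet the statement is false without that hypothesis: as the paper remarks right after the lemma, it fails for the principal $2$-block of $\SL(2,3)$ with $Z=\Z(\SL(2,3))$ (there $k(B)=7$ while $|Z|\,k(\overline{B})=8$). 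A correct version of your idea is the $\ast$-construction of \autoref{astconst}: one must take $\lambda\in\Irr(D/\mathfrak{foc}(B))$, and the abelian hypothesis enters precisely through $\mathfrak{foc}(B)=[D,I(B)]$, $Z\le\C_D(I(B))$ and $D=[D,I(B)]\oplus\C_D(I(B))$, which give $Z\cap\mathfrak{foc}(B)=1$ so that every character of $Z$ is the restriction of some admissible $\lambda$; for $\SL(2,3)$ one has $Z\le\mathfrak{foc}(B)$ and no such twist exists. None of this is in your sketch, and without it the claim ``preserving block membership'' is exactly the unproved point.

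Your subsection-counting route is closer to the paper's proof but is incomplete at its crucial step. First, the non-conjugacy of $(u,b_u)$ and $(uz,b_{uz})$ for $1\ne z\in Z$ does not follow from the argument you give: if $gug^{-1}=uz$, then in $\overline{G}$ one only gets $\overline{g}\overline{u}\overline{g}^{-1}=\overline{u}$, which is no contradiction; the correct reason is that fusion is controlled by $I(B)$ (abelian defect) and $Z\le\C_D(I(B))$ in the decomposition $D=[D,I(B)]\oplus\C_D(I(B))$, so no element of $I(B)$ can map $u$ to $uz$. Second, and more importantly, the identity $l(b_u)=l(\overline{b_{\bar u}})$ is not delivered by ``\autoref{fusion} applied along the chain'': \autoref{fusion} compares $b_u$ with a block of $\C_G(u)/\langle u\rangle$, whereas here you must compare $b_u$ with a block of $\C_{\overline{G}}(\overline{u})$, and $\C_{\overline{G}}(\overline{u})$ can be strictly larger than $\C_G(u)Z/Z$. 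This is exactly what the paper's proof treats with care: it writes $\C_{\overline{G}}(\overline{u})=\overline{C_u}$ with $\C_G(u)\le C_u$, passes to $b_u^{C_u}$, uses domination by a central $p$-subgroup (\cite[Theorem~5.8.11]{Nagao}) to preserve $l$, and then applies Fong--Reynolds, using that $C_u/\C_G(u)$ is a $p$-group while the relevant stabilizer quotient is a $p'$-group, to conclude $l(b_u^{C_u})=l(b_u)$; it also invokes \cite[Corollary~1]{Watanabe1} to make $l(b_{uz})$ independent of $z\in\C_D(I(B))$, which organizes both counts as multiples of $\sum_{u}l(b_u)$. Until you supply these steps (or equivalents), the bookkeeping identity $k(B)=|Z|\,k(\overline{B})$ is asserted rather than proved.
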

\begin{proof}
Let $D$ be a defect group of $B$. Obviously, $Z\subseteq\C_D(I(B))$. 
Let $\mathcal{R}$ be a set of representatives for the $I(B)$-conjugacy classes of $[D,I(B)]$. 
Then $\{(uz,b_{uz}):u\in\mathcal{R},\ z\in\C_D(I(B))\}$ is a set of representatives of the $G$-conjugacy classes of $B$-subsections. By \cite[Corollary~1]{Watanabe1}, we have $l(b_{uz})=l(b_u)$ for all $z\in\C_D(I(B))$. 
This shows
\[k(B)=\sum_{u\in\mathcal{R}}\sum_{z\in\C_D(I(B))}{l(b_{uz})}=\lvert\C_D(I(B))\rvert\sum_{u\in\mathcal{R}}{l(b_u)}.\]
Now we consider the block $\overline{B}$. 
For $H\le G$ and $x\in D$ we write $\overline{H}:=HZ/Z$ and $\overline{x}:=xZ$.
Let $\C_{\overline{G}}(\overline{x})=\overline{C_x}$ with $\C_G(\langle x\rangle Z)=\C_G(x)\le C_x\le \N_G(\langle x\rangle Z)$. 
Moreover, let $\overline{b_x^{C_x}}$ be the unique block of $\C_{\overline{G}}(\overline{x})$ dominated by $b_x^{C_x}$. 
Choose a transversal $\mathcal{S}\subseteq G$ for the cosets $\overline{\C_D(I(B))}$.
Since $I(B)\cong I(\overline{B})$, the set
\[\{(\overline{uz},\overline{b_{uz}^{C_{uz}}}):u\in\mathcal{R},\ z\in\mathcal{S}\}\]
represents the $\overline{B}$-subsections up to $\overline{G}$-conjugacy (cf. proof of \autoref{fusiondom}). By \cite[Theorem~5.8.11]{Nagao}, $l(\overline{b_{uz}^{C_{uz}}})=l(b_{uz}^{C_{uz}})$. Since $C_{uz}$ acts trivially on $\langle \overline{uz}\rangle$ and on $Z$, it follows that $C_{uz}/\C_G(uz)$ is a $p$-group. 
From the properties of fusion systems it is clear that $\N_G(\langle uz\rangle Z,b_{uz})/\C_G(uz)$ is a $p'$-group. Hence, $\N_G(\langle uz\rangle Z,b_{uz})\cap C_{uz}=\C_G(uz)$ and the Fong-Reynolds Theorem implies $l(b_{uz}^{C_{uz}})=l(b_{uz})=l(b_u)$.
Consequently,
\[k(\overline{B})=\sum_{u\in\mathcal{R}}\sum_{z\in\mathcal{S}}{l(\overline{b_{uz}^{C_{uz}}})}=\lvert\overline{\C_D(I(B))}\rvert\sum_{u\in\mathcal{R}}{l(b_u)}.\]
This proves the claim.
\end{proof}

The statement of \autoref{kBdom} is not true for non-abelian defect groups, as it can be seen from the principal $2$-block of $\SL(2,3)$ with $Z:=\Z(\SL(2,3))$. 

Next, we need a result about the so-called $*$-construction introduced in \cite{BrouePuig}. 

\begin{Lemma}\label{astconst}
Let $B$ be a $p$-block of a finite group with defect group $D$. Let $u\in D$ and let $(u,b)$ be a $B$-subsection. Let $\chi\in\Irr(B)$, $\phi\in\IBr(b)$, and let $\lambda\in\Irr(D/\mathfrak{foc}(B))\subseteq\Irr(D)$. Then $\lambda\mathop{\ast}\chi\in\Irr(B)$ and
\[d^u_{\lambda\mathop{\ast}\chi,\phi}=\lambda(u)d^u_{\chi\phi}.\]
\end{Lemma}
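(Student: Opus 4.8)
The plan is to recall the definition of the $*$-construction and exploit its multiplicativity with respect to generalized decomposition numbers. Recall that for $\lambda\in\Irr(D/\mathfrak{foc}(B))$ one defines a linear character $\lambda$ of $D$ (trivial on $\mathfrak{foc}(B)$), and the Broué–Puig $*$-construction produces a perfect isometry, or more concretely an action on $\Irr(B)$, such that $\lambda\mathop{\ast}\chi\in\Irr(B)$; this is precisely the content of \cite{BrouePuig} and is already recorded in the literature (see also \cite{Nagao}). So the first step is simply to invoke that $\lambda\mathop{\ast}\chi$ is a well-defined irreducible character of $B$, which gives the first assertion.

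For the formula on decomposition numbers, the key point is that the $*$-construction is defined so as to interact correctly with the block-theoretic structure at a subsection. Concretely, one uses that the generalized decomposition number $d^u_{\chi\phi}$ is obtained by evaluating $\chi$ on $p$-singular elements whose $p$-part is $u$, and that $\lambda\mathop{\ast}\chi$ differs from $\chi$ on such elements precisely by the scalar $\lambda(u)$ — this is built into the construction since $\lambda$ is inflated from $D/\mathfrak{foc}(B)$ and $u\in D$. I would make this precise by writing $d^u_{\chi\phi}$ in terms of the contribution $m^{(u,b)}_{\chi\chi'}$ or directly via the formula $d^u_{\chi\phi}=\sum_{x} \widehat{\phi}(x^{-1})\,\chi(ux)$ where $x$ ranges over $p$-regular elements of $\C_G(u)$ in $b$, and then observing that $(\lambda\mathop{\ast}\chi)(ux)=\lambda(u)\chi(ux)$ for all such $x$. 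Summing gives $d^u_{\lambda\mathop{\ast}\chi,\phi}=\lambda(u)\,d^u_{\chi\phi}$ as claimed. The statement $\lambda(u)$ makes sense because $u\in D$ and $\lambda\in\Irr(D)$.

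The main obstacle is getting the precise compatibility of the $*$-construction with the decomposition numbers at a subsection $(u,b)$ whose associated block $b$ lives in $\C_G(u)$, rather than at the trivial subsection where the statement is more transparent. One must check that the twist by $\lambda$ seen inside $\C_G(u)$ — where $\lambda$ restricts to a character of $\C_D(u)$ trivial on the appropriate focal subgroup — is compatible with the Brauer character $\phi$ of $b$, so that the sum defining $d^u$ transforms purely by the global scalar $\lambda(u)$ and no further twisting of $\widehat{\phi}$ intervenes. I expect this to follow from the functoriality of the $*$-construction under the Brauer correspondence, together with the fact that $\lambda$ is trivial on $\mathfrak{foc}(B)$, which controls exactly the fusion-theoretic ambiguity; once that is in place the computation is routine.
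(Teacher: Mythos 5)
Your skeleton is close to the paper's: both arguments come down to (a) a description of $\lambda\mathop{\ast}\chi$ on the $p$-section of $u$ in terms of the subsection data of $\chi$, and (b) a uniqueness statement that lets one read off generalized decomposition numbers from those values. Your dual-basis extraction of $d^u_{\chi\phi}$ from the values $\chi(ux)$, $x\in\C_G(u)_{p'}$, is a legitimate form of (b); the paper packages the same input as the linear independence of the class functions $\widetilde{\psi}^G$ attached to a system of representatives of $B$-subsections (disjoint supports for non-conjugate $p$-parts, then linear independence of $\IBr(\C_G(u))$ on the $p$-regular elements of $\C_G(u)$). Note only that your extraction must be performed against all of $\IBr(\C_G(u))$, not just $\IBr(b)$, since Brauer's Second Main Theorem expresses $\chi(ux)$ through every Brauer correspondent of $B$ in $\C_G(u)$.

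The genuine gap is step (a). The identity $(\lambda\mathop{\ast}\chi)(ux)=\lambda(u)\chi(ux)$ is, modulo step (b), equivalent to the formula you are asked to prove, and you do not establish it: you first declare it ``built into the construction'' and then, in your closing paragraph, concede that precisely this compatibility at a non-trivial subsection is the main obstacle, which you only ``expect'' to follow from an unspecified functoriality. That is exactly where the content of the lemma lies. The paper supplies it by quoting Robinson's explicit expansion $\lambda\mathop{\ast}\chi=\sum_{(v,b_v)}\sum_{\psi\in\IBr(b_v)}\lambda(v)\,d^v_{\chi\psi}\,\widetilde{\psi}^G$ over representatives of the $B$-subsections, so that the twisted coefficients are given from the outset and only the uniqueness step remains to identify them with the generalized decomposition numbers of $\lambda\mathop{\ast}\chi$. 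If you want to run your version, you must derive the value identity from whichever definition of the $\ast$-construction you adopt; the delicate point is that several $G$-classes of subsections may have $G$-conjugate first components, and the constancy of $\lambda$ on all of them uses that conjugacy of subpairs is realized in the fusion system together with the triviality of $\lambda$ on $\mathfrak{foc}(B)$ --- this is exactly your worry that ``no further twisting of $\widehat{\phi}$ intervenes,'' which you name but leave unproved. As written, the proposal assumes the heart of the statement and proves only the routine part.
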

\begin{proof}
We use the approach from \cite[Section~1]{RobinsonFocal}. Our first claim is already proved there.
Let $\mathcal{R}$ be a set of representatives for the $G$-conjugacy classes of $B$-subsections such that $(u,b)\in\mathcal{R}$. For $(v,b_v)\in\mathcal{R}$, $\psi\in\IBr(b_v)$ and $x\in\C_G(v)$ let
\[\widetilde{\psi}(x):=\begin{cases}\psi(s)&\text{if }x=vs\text{ where }s\in\C_G(v)_{p'},\\0&\text{otherwise}\end{cases}\]
where $\C_G(v)_{p'}$ denotes the set of $p$-regular elements of $\C_G(v)$. 
Then $\widetilde{\psi}$ is a class function on $\C_G(v)$, and it is well-known (as a consequence of Brauer's Second Main Theorem) that
\[\chi=\sum_{(v,b_v)\in\mathcal{R}}\sum_{\psi\in\IBr(b_v)}{d^v_{\chi\psi}\widetilde{\psi}^G}.\]
By \cite{RobinsonFocal} we have
\[\lambda\mathop{\ast}\chi=\sum_{(v,b_v)\in\mathcal{R}}\sum_{\psi\in\IBr(b_v)}{\lambda(v)d^v_{\chi\psi}\widetilde{\psi}^G}.\]
Therefore, it suffices to show that the functions $\{\widetilde{\psi}^G:(v,b_v)\in\mathcal{R},\psi\in\IBr(b_v)\}$ are linearly independent over $\mathbb{C}$. Thus, assume that
\[\Phi:=\sum_{(v,b_v)\in\mathcal{R}}\sum_{\psi\in\IBr(b_v)}{\alpha_{\psi}\widetilde{\psi}^G}=0\]
for some $\alpha_{\psi}\in\mathbb{C}$. Let $(v,b_v),(v',b_{v'})\in\mathcal{R}$ such that $v$ and $v'$ are not conjugate in $G$. Then the functions $\widetilde{\psi}^G$ and $\widetilde{\psi'}^G$ for $\psi\in\IBr(b_v)$ and $\psi'\in\IBr(b_{v'})$ have disjoint support. Hence, it suffices to consider partial sums of $\Phi$ corresponding to subsets $\mathcal{S}$ of the form 
\[\mathcal{S}:=\{(v,b_v)\in\mathcal{R}:v\text{ is conjugate to $u$ in }G\}.\]
Choose $1=x_1,\ldots,x_n\in G$ such that $\mathcal{S}=\{(x_iux_i^{-1},b_{x_iux_i^{-1}}):i=1,\ldots,n\}$. Then $\{x_i^{-1}b_{x_iux_i^{-1}}x_i:i=1,\ldots,n\}$ is the set of Brauer correspondents of $B$ in $\C_G(u)$. Moreover, for $s\in\C_G(u)_{p'}$ we have
\begin{align*}
\Phi(us)&=\sum_{(v,b_v)\in\mathcal{S}}\sum_{\psi\in\IBr(b_v)}{\alpha_{\psi}\widetilde{\psi}^G(us)}=\sum_{i=1}^n\sum_{\psi\in\IBr(b_{x_iux_i^{-1}})}{\alpha_{\psi}\bigl({^{x_i^{-1}}\psi}\bigr)(s)}\\
&=\sum_{\substack{b\in\Bl(\C_G(u)),\\b^G=B}}\sum_{\psi\in\IBr(b)}{\alpha^*_{\psi}\psi(s)}
\end{align*}
where $\alpha^*_{\psi}:=\alpha_{\psi'}$ if $^{x_i}\psi=\psi'$ for some $i\in\{1,\ldots,n\}$. Since the irreducible Brauer characters of $\C_G(u)$ are linearly independent as functions on $\C_G(u)_{p'}$ (see \cite[Lemma~IV.3.4]{Feit}), the claim follows.
\end{proof}

The following result generalizes \cite[Corollary~13]{SambaleC3}.

\begin{Proposition}\label{freeact}
Let $B$ be a block of a finite group with abelian defect group $D$. Suppose that there is an element $u\in D$ such that $\C_{I(B)}(u)$ acts freely on $[D,\C_{I(B)}(u)]$. Then $k(B)\le|D|$. This applies in particular, if $[D,\C_{I(B)}(u)]$ is cyclic or if $\C_{I(B)}(u)$ has prime order.
\end{Proposition}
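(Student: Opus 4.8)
The plan is to reduce to a single subsection $(u,b_u)$ and apply the machinery built up earlier in the excerpt, exactly as in the proof of \autoref{main}. First I would let $\mathcal{F}$ be the fusion system of $B$ and pass to the subsection $(u,b_u)$. Since $D$ is abelian, $\langle u\rangle$ is fully $\mathcal{F}$-centralized and $\C_D(u)=D$, so by \autoref{fusion} the block $b_u$ has defect group $D$, and $I(b_u)\cong\C_{I(B)}(u)$ acts on $D$. The hypothesis says this action is free on $[D,\C_{I(B)}(u)]=[D,I(b_u)]=\mathfrak{foc}(b_u)$. Moreover $b_u$ dominates a block $\overline{b_u}$ of $\C_G(u)/\langle u\rangle$ with abelian defect group $\overline{D}=D/\langle u\rangle$; writing $\overline{C}$ for its Cartan matrix, the Cartan matrix of $b_u$ is $|\langle u\rangle|\overline{C}$, and $l(b_u)=l(\overline{b_u})$.

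Next I would compute $\det\overline{C}$. Since $D$ is abelian, $B$ (hence $b_u$) is controlled, and $D=[D,I(b_u)]\oplus\C_D(I(b_u))$. A non-trivial subsection $(v,\beta_v)$ of $\overline{b_u}$ has inertial quotient $\C_{I(b_u)}(v)$; if $v$ has a non-trivial component in the free summand $[D,I(b_u)]$ one expects $\C_{I(b_u)}(v)=1$, while if $v$ lies in $\C_D(I(b_u))$ the subsection is major. The key point I want is that for \emph{every} non-trivial subsection of $\overline{b_u}$ one has $l(\beta_v)=1$: this follows because $\overline{D}$ is abelian (so $\C_{\overline D}(v)$ is abelian) and the relevant inertial quotient is trivial, by \cite[Theorem~V.9.13]{Feit} as used in \autoref{main}. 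Wait — one must be careful: $v$ could have a non-trivial contribution from $\C_D(I(b_u))$ \emph{and} from $[D,I(b_u)]$, so that $\C_{I(b_u)}(v)$ need not be trivial. Here the hypothesis must be used more cleverly; the cleanest route is to replace $u$ itself by a suitable power/translate so that $\C_{I(B)}(u)$ already acts freely on all of $[D,\C_{I(B)}(u)]$, then run the Fujii argument inside $\C_G(u)/\langle u\rangle$ treating $[D,I(b_u)]$ (on which the action is free) and the fixed summand separately, exactly as Fujii's \cite[Corollary~1]{DetCartan} permits. With all non-trivial $\overline{b_u}$-subsections having $l=1$, Fujii's theorem gives $\det\overline{C}=|\overline{D}|$. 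Consequently $\det(|\langle u\rangle|\overline{C})=|\langle u\rangle|^{l(b_u)}|\overline D|$, and feeding the Cartan matrix of $b_u$ into \cite[Theorem~11]{SambaleC3} yields $k(B)\le|D|$, as in \autoref{main}.

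For the two special cases: if $[D,\C_{I(B)}(u)]$ is cyclic, then $\C_{I(B)}(u)$, being a $p'$-group acting faithfully on a cyclic $p$-group, is itself cyclic and acts freely on it (a non-trivial $p'$-automorphism of a cyclic $p$-group has no non-trivial fixed point, by \cite[Theorem~5.2.4]{Gorenstein} or a direct check on $\Omega_1$), so the hypothesis is satisfied with the same $u$. If $\C_{I(B)}(u)$ has prime order $q$, then on $[D,\C_{I(B)}(u)]$ every non-identity element generates the whole group, so an element fixing a non-trivial point would force the whole group to fix it, contradicting $[D,\C_{I(B)}(u)]\ne 1$ unless that point is trivial — hence the action is free.

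The main obstacle I anticipate is the subtlety flagged above: ensuring that \emph{every} non-trivial subsection of the dominated block $\overline{b_u}$ has $l=1$, given only that the action is free on $[D,\C_{I(B)}(u)]$ rather than on all of $D$. Resolving this requires either decomposing $D$ as $[D,\C_{I(B)}(u)]\oplus\C_D(\C_{I(B)}(u))$ and checking that a subsection $(v,\beta_v)$ with $v=v_1v_2$ ($v_1$ in the free part, $v_2$ central for the action) still has $\C_{\C_{I(B)}(u)}(v)=\C_{\C_{I(B)}(u)}(v_1)=1$ whenever $v_1\ne 1$, together with handling $v_1=1$ (where $(v,\beta_v)$ is major and one invokes the inductive structure), or invoking the version of Fujii's determinant formula that only needs control on the free summand. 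I would check the former: freeness on $[D,\C_{I(B)}(u)]$ does give $\C_{\C_{I(B)}(u)}(v_1)=1$ for $v_1\ne 1$, and then $\C_{\C_{I(B)}(u)}(v)\le\C_{\C_{I(B)}(u)}(v_1)=1$; the residual case $v\in\C_D(\C_{I(B)}(u))\setminus 1$ gives a major subsection whose block again has the same shape, so an induction on $|D|$ (or a direct appeal to the structure of blocks with abelian defect and the relevant $l$-values) closes the argument.
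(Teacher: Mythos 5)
Your reduction to the major subsection $(u,b)$ and the two special cases at the end are fine, but the main line of the argument has a genuine gap at exactly the point you flagged, and the fix you sketch does not work. Quotienting $\C_G(u)$ only by $\langle u\rangle$ leaves the whole fixed-point subgroup $Z:=\C_D(\C_{I(B)}(u))$ (which may be much larger than $\langle u\rangle$) inside the defect group $D/\langle u\rangle$ of $\overline{b_u}$, and every non-trivial subsection $(\overline v,\beta_{\overline v})$ with $\overline v$ in $Z/\langle u\rangle$ is major with inertial quotient isomorphic to $\C_{I(B)}(u)$, so $l(\beta_{\overline v})$ need not be $1$. Consequently Fujii's theorem does not give $\det\overline C=|\overline D|$; in fact one expects $\det\overline C=|Z/\langle u\rangle|^{\,l-1}\cdot|\overline D|$ in this situation, so the determinant hypothesis you need for the final application of \cite[Theorem~11]{SambaleC3} (as in \autoref{main}) simply fails whenever $Z>\langle u\rangle$ and $l(b)>1$. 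An ``induction on $|D|$'' cannot repair this, because what is needed about $\overline{b_u}$ is a statement on elementary divisors of its Cartan matrix, not the inequality $k\le|D|$, and that statement is false in the generality of the hypothesis; likewise, replacing $u$ by a translate is not available, since $\C_{I(B)}(u')$ for a modified element need not satisfy the freeness hypothesis (and need not be contained in $\C_{I(B)}(u)$).

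The paper's proof circumvents this by working with all of $Z$ rather than $\langle u\rangle$: it takes a Brauer correspondent $b_Z$ of $b$ in $\C_G(Z)$ and invokes Watanabe's theorem \cite[Corollary]{Watanabe2} to see that $b$ and $b_Z$ have Cartan matrices with the same elementary divisors; the block of $\C_G(Z)/Z$ dominated by $b_Z$ has defect group $D/Z\cong[D,\C_{I(B)}(u)]$ on which the inertial quotient now acts freely, so Fujii applies there and gives elementary divisors $|Z|$ (with multiplicity $l(b)-1$) and $|D|$ for the Cartan matrix $C_u$ of $b$. Since the determinant alone is not enough for Brauer's criterion, the proof then uses the $*$-construction (\autoref{astconst}) together with \cite[Theorem~2]{RobinsonFocal} to show that every row of the decomposition matrix of $b$ occurs $|Z|$ times, producing an integral indecomposable factorization $|Z|^{-1}C_u=\widetilde Q_u^{\mathrm{T}}\widetilde Q_u$, and concludes via \cite[Lemma~4]{SambaleC3} and Brauer's bound \cite[Theorem~4.4]{habil} rather than \cite[Theorem~11]{SambaleC3}. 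You would need this (or an equivalent treatment of the fixed-point subgroup $Z$) to make your argument correct; as written, the determinant claim and hence the final step are unjustified.
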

\begin{proof}
Let $(u,b)$ be a $B$-subsection. We will determine the shape of the Cartan matrix $C_u$ of $b$. By \autoref{fusion}, $b$ has defect group $D$ and inertial quotient $I(b)\cong\C_{I(B)}(u)$. Let $Z:=\C_{D}(I(b))$, and let $b_Z$ be a Brauer correspondent of $b$ in $\C_G(Z)\,(\subseteq\C_G(u))$. By \cite[Corollary]{Watanabe2} (applied repeatedly), the elementary divisors of the Cartan matrices of $b$ and $b_Z$ coincide (counting multiplicities). Let $\overline{b_Z}$ the block of $\C_G(Z)/Z$ dominated by $b_Z$ with defect group $\overline{D}:=D/Z$. Then $I(\overline{b_Z})\cong I(b)$ acts freely on $\overline{D}\cong[D,\C_{I(B)}(u)]$. Hence, a result by Fujii~\cite{DetCartan} implies that the elementary divisors of the Cartan matrix of $\overline{b_Z}$ are $1$ and $|\overline{D}|$ where $|\overline{D}|$ occurs with multiplicity $1$. Consequently, the elementary divisors of $C_u$ are $|Z|$ and $|D|$ where $|D|$ occurs with multiplicity $1$. In particular, $\widetilde{C}_u:=|Z|^{-1}C_u$ is an integral matrix with determinant $|\overline{D}|$. Let $Q_u$ be the decomposition matrix of $b$. 
By the proof of \cite[Theorem~2]{RobinsonFocal} we have $\lambda\mathop{\ast}\chi\ne\chi$ for every $\chi\in\Irr(b)$ and $1\ne\lambda\in\Irr(D/[D,I(b)])\cong\Irr(Z)$ (this is related to the fact that decomposition numbers corresponding to major subsections do not vanish). 
Therefore, by \autoref{astconst}, every row of $Q_u$ appears $|Z|$ times. 
Taking only every $|Z|$-th row of $Q_u$, we obtain an indecomposable matrix $\widetilde{Q}_u\in\mathbb{Z}^{k\times l(b)}$ of rank $l(b)$ without vanishing rows such that $\widetilde{C}_u=\widetilde{Q}_u^\text{T}\widetilde{Q}_u$ and $k:=\frac{1}{|Z|}k(b)$ (see \cite[Definition~1 and Proposition~2]{SambaleC3}). Lemma~4 in \cite{SambaleC3} gives
\[\min\{|D|xC_u^{-1}x^\text{T}:0\ne x\in\mathbb{Z}^{l(b)}\}=\min\{\det(\widetilde{C}_u)x\widetilde{C}_u^{-1}x^\text{T}:0\ne x\in\mathbb{Z}^{l(b)}\}\ge l(b).\]
Hence, a result by Brauer (see \cite[Theorem~4.4]{habil}) implies the first claim. The second claim is trivial.
\end{proof}

Since every abelian coprime linear group has a regular orbit, we obtain the following (cf. \cite[Lemma~14.6]{habil}).

\begin{Corollary}
Let $B$ be a block of a finite group with abelian defect group $D$. Suppose that $I(B)$ contains an abelian subgroup of prime index or of index at most $5$. Then $k(B)\le|D|$.
\end{Corollary}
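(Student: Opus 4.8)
The plan is to reduce the statement to \autoref{freeact} by producing, for any such block $B$, an element $u\in D$ for which $\C_{I(B)}(u)$ acts freely on $[D,\C_{I(B)}(u)]$. Since $I(B)$ acts coprimely and faithfully on the abelian $p$-group $D$, the hypothesis gives an abelian subgroup $A\le I(B)$ of index at most $5$ (the ``prime index'' case being subsumed). The classical fact that an abelian coprime linear group has a regular orbit on the underlying module applies here: viewing $A$ as acting on $D$ (equivalently, on the $\mathbb{F}_p$-vector space $\Omega(D)$, or more carefully on $D$ itself via the coprimeness), there is an element $u\in D$ with $\C_A(u)=1$. I would cite \cite[Lemma~14.6]{habil} (or reprove the standard regular-orbit statement for abelian coprime actions) for this.

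Next I would show that this $u$ does the job for the full group $I(B)$. The point is that $\C_{I(B)}(u)$ has order dividing $|I(B):A|\le 5$ — indeed $\C_{I(B)}(u)\cap A=\C_A(u)=1$, so $\C_{I(B)}(u)$ embeds into $I(B)/A$ and hence has order $1$, $2$, $3$, $4$, or $5$. If $|\C_{I(B)}(u)|\le 3$ it is cyclic of prime order (or trivial), and if $|\C_{I(B)}(u)|=5$ it is cyclic of prime order; in all these cases the second assertion of \autoref{freeact} (``$\C_{I(B)}(u)$ has prime order'') applies directly and we are done. The only genuinely non-trivial case is $|\C_{I(B)}(u)|=4$.

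For the case $|\C_{I(B)}(u)|=4$, the group $E:=\C_{I(B)}(u)$ is either $Z_4$ or $Z_2^2$, acting coprimely (so faithfully, and completely reducibly) on the abelian $p$-group $[D,E]$ with $p$ odd. If $E\cong Z_4$ it is cyclic, hence abelian, and again has a regular orbit on $[D,E]$ by the same coprime-action result; that regular orbit element $v$ gives $\C_E(v)=1$, so $E$ acts freely on $[D,E]$ and \autoref{freeact} applies — but one must be slightly careful, since \autoref{freeact} is phrased in terms of $\C_{I(B)}(u)$ acting freely on $[D,\C_{I(B)}(u)]$, not in terms of a further inner element; the clean fix is to observe that if $E$ is abelian then its regular orbit on $[D,E]$ already yields freeness, so the case $|\C_{I(B)}(u)|=4$ with $E\cong Z_4$ is immediate. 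The harder subcase is $E\cong Z_2^2$: here I would argue that a faithful coprime $Z_2^2$-module over a field of odd characteristic still has a regular orbit — this is a small, well-known computation (the three involutions have fixed-point subspaces that are proper, and their union cannot cover the module when $p\ge 3$ by a counting/dimension argument), after which $E$ acts freely on $[D,E]$ and \autoref{freeact} finishes the proof.

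The main obstacle, then, is the $E\cong Z_2^2$ case: one needs that a coprime faithful action of $Z_2^2$ on an abelian $p$-group ($p$ odd) has a regular orbit, and the subtlety is that ``no regular orbit'' would force the three index-$2$ fixed subgroups to cover the whole group, which a Frattini-quotient counting argument rules out precisely because $p$ is odd (for $p=2$ the analogous statement fails, but $p=2$ cannot occur here by coprimeness). Everything else is bookkeeping: translating ``abelian subgroup of index at most $5$'' into ``$|\C_{I(B)}(u)|\le 5$'', and invoking \autoref{freeact}.
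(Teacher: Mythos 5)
Your reduction works smoothly as long as the element $u$ on the regular $A$-orbit satisfies $|\C_{I(B)}(u)|\in\{1,2,3,5\}$: then $\C_{I(B)}(u)$ is trivial or of prime order and \autoref{freeact} applies, exactly as in the paper. The genuine gap is in your treatment of the case $|\C_{I(B)}(u)|=4$, where you conflate \emph{having a regular orbit} with \emph{acting freely}. \autoref{freeact} requires $\C_E(w)=1$ for \emph{every} non-trivial $w\in[D,E]$ (with $E:=\C_{I(B)}(u)$), whereas a regular orbit only produces \emph{one} such $w$; your ``clean fix'' that a regular orbit of an abelian $E$ already yields freeness is false. For $E\cong Z_4$ take $p\equiv 1\pmod 4$ and let a generator act on $Z_p^2$ as $\operatorname{diag}(i,-1)$: the action is faithful with a regular orbit, but the involution in $E$ fixes a non-trivial subgroup of $[D,E]=D$. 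For $E\cong Z_2^2$ the situation is worse: a faithful action of $Z_2^2$ on a non-trivial abelian $p$-group can \emph{never} be free (if one involution acted as inversion the other two could not both be fixed-point-free without killing faithfulness; equivalently, fixed-point-free automorphism groups have cyclic or quaternion Sylow $2$-subgroups). Since $E$ acts faithfully on $[D,E]\ne 1$, the hypothesis of \autoref{freeact} simply cannot hold for this $u$, and your counting argument about the three involutions only reproves the regular-orbit statement, which does not help. So the case $|\C_{I(B)}(u)|=4$ is left unproved by your argument.

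This is precisely why the paper does not route the order-$4$ case through \autoref{freeact}: after producing $u$ with $|\C_{I(B)}(u)|\le[I(B):A]$ via the regular orbit of $A$, the case $|\C_{I(B)}(u)|\le 4$ is covered by the earlier result cited as \cite[Lemma~14.6]{habil} (resting on \cite[Lemma~14.5]{habil} and ultimately on the known structure of blocks whose subsections have inertial index at most $4$, in the spirit of \cite{Usami23I,UsamiZ4}), while the new ingredient \autoref{freeact} is what allows the index to be a prime or to reach $5$, since a centralizer of prime order automatically acts freely on $[D,\C_{I(B)}(u)]$. To repair your write-up you should either quote such a result for inertial index $4$ ($I(b)\cong Z_4$ or $Z_2^2$) or give an independent argument for it; it does not follow from \autoref{freeact}.
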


A recent paper by Keller-Yang~\cite{KellerYang} provides a dual version. 

\begin{Corollary}
Let $B$ be a block of a finite group with abelian defect group $D$. Suppose that the commutator subgroup $I(B)'$ has prime order or order at most $5$. Then $k(B)\le|D|$.
\end{Corollary}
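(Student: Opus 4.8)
The plan is to reduce this statement to \autoref{freeact} exactly as the previous corollary reduces to it, except that the roles of ``subgroup of small index'' and ``commutator subgroup of small order'' are swapped, and we invoke the Keller--Yang dual regular orbit theorem instead of the classical one. Concretely, let $D$ be an abelian defect group of $B$, and let $A:=I(B)\le\Aut(D)$, a $p'$-group acting faithfully (and coprimely) on $D$. Since $D$ is abelian, $B$ is controlled and $D=[D,A]\oplus\C_D(A)$; restricting the action to the direct summand $[D,A]$ changes nothing relevant and we may as well assume $\C_D(A)=1$, i.e.\ $A$ acts on $D=[D,A]$ without a common fixed point on any nonzero submodule. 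Now I would split $D$ into its homogeneous components with respect to the exponent, i.e.\ into Sylow subgroups of the various cyclic factors, but in fact the cleanest route is: view $D$ as a faithful coprime $A$-module and apply \cite{KellerYang}, which asserts that if $A'$ (the commutator subgroup) has order at most $5$, or more generally prime order, then $A$ has a regular orbit on $D$ --- that is, there is $u\in D$ with $\C_A(u)=1$.

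Given such a $u$, the element $\C_{I(B)}(u)=\C_A(u)=1$ is trivial, so it certainly acts freely on $[D,\C_{I(B)}(u)]=1$ (vacuously), and \autoref{freeact} applies directly to give $k(B)\le|D|$. Actually one should double-check that \cite{KellerYang} is stated for the module situation we need: the Keller--Yang theorem is about coprime linear groups $A\le\GL(V)$ for $V$ a (possibly not elementary abelian) finite abelian group, concluding that the existence of a regular orbit follows from smallness of $A'$; since our $A=I(B)$ acts coprimely and faithfully on the abelian $p$-group $D$, this is precisely the setup. So the only real content is citing the right theorem with the right hypothesis, together with the reduction to the $\C_D(A)=1$ case via the controlled-fusion decomposition $D=[D,I(B)]\oplus\C_D(I(B))$ recalled at the end of Section~2.

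The main obstacle, such as it is, is purely bookkeeping: one must make sure the Keller--Yang hypothesis ``$A'$ of prime order or order $\le 5$'' is exactly what guarantees a regular orbit for \emph{abelian} (not just elementary abelian) modules, and that no extra coprimality or solvability caveat is needed that $I(B)$ might fail --- but $I(B)$ is a $p'$-subgroup of $\Aut(D)$ with $D$ a $p$-group, so coprimality is automatic, and a group with commutator subgroup of order $\le 5$ is metabelian, hence solvable, so any solvability hypothesis is harmless. Once the regular orbit is in hand the argument is immediate from \autoref{freeact}. I would therefore write the proof as essentially one sentence: by \cite{KellerYang}, $I(B)$ has a regular orbit on $[D,I(B)]$, so there is $u\in D$ with $\C_{I(B)}(u)=1$, and \autoref{freeact} applies.
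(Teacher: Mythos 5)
The decisive step in your argument --- that \cite{KellerYang} guarantees a \emph{regular} orbit of $I(B)$ on $D$ (or on $[D,I(B)]$) whenever $I(B)'$ has prime order or order at most $5$ --- is not what Keller--Yang prove, and it is false as stated. Their theorem, which is the ``dual'' of the classical fact that abelian coprime linear groups have regular orbits, asserts that a group $A$ acting faithfully and coprimely on a finite abelian group has an orbit of size at least $\lvert A:A'\rvert$; equivalently, there exists $u$ with $\lvert\C_A(u)\rvert\le\lvert A'\rvert$. It does not give $\C_A(u)=1$. Concretely, let $A\cong D_8\le\GL(2,3)$ act faithfully and coprimely on $D\cong Z_3^2$. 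Then $\lvert A'\rvert=2$, but every orbit on the eight non-trivial elements of $D$ has size $4$, so there is no regular orbit; in the corresponding block situation (say the principal block of $Z_3^2\rtimes D_8$, so $I(B)\cong D_8$ and $I(B)'$ has prime order) one has $\lvert\C_{I(B)}(u)\rvert=2$ for \emph{every} $1\ne u\in D$. Hence the element $u$ with $\C_{I(B)}(u)=1$ on which your one-sentence proof rests need not exist, and \autoref{freeact} cannot be invoked in the way you propose. (If a regular orbit did exist, the conclusion would indeed be immediate, so the entire weight of your argument lies on the misquoted theorem; this is a genuine gap, not a bookkeeping issue.)

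The intended argument uses the correct statement and then proceeds exactly as in the preceding corollary: pick $u$ with $\lvert\C_{I(B)}(u)\rvert\le\lvert I(B)'\rvert$ and deal with a possibly non-trivial centralizer. When $\C_{I(B)}(u)$ is trivial or of prime order, the ``prime order'' clause of \autoref{freeact} applies (note that a free action is not automatic for small centralizers: $Z_2^2$, for example, can never act freely, so you cannot simply wave this away); the remaining small centralizers, such as order $4$, are covered by the small-inertial-index results cited alongside the previous corollary, e.g. \cite[Lemma~14.6]{habil}. In the $D_8$ example above this is precisely what happens: the best one can achieve is $\lvert\C_{I(B)}(u)\rvert=2$, and one finishes with \autoref{freeact}. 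Your reduction to $\C_D(I(B))=1$ via $D=[D,I(B)]\oplus\C_D(I(B))$ is harmless but also unnecessary, since the hypothesis of \autoref{freeact} is local to the chosen element $u$.
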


Now we prove a result about the number of irreducible Brauer characters.

\begin{Proposition}\label{IBr}
Let $B$ be a block of a finite group with abelian defect group $D$ such that $e(B)$ is a prime. Then $l(B)\le e(B)$.
\end{Proposition}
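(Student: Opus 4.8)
The plan is to analyze the block $b_u$ attached to a non-trivial $B$-subsection $(u,b)$ and transfer information back to $B$ via the structure of subsections. Since $e(B)=:q$ is prime, the inertial quotient $I(B)$ is cyclic of order $q$ acting on the abelian $p$-group $D$, and by \cite[Theorem~2.3]{Gorenstein} we have $D=[D,I(B)]\oplus\C_D(I(B))$. The action of $I(B)$ on $[D,I(B)]$ is fixed-point-free (a non-trivial element of the prime-order group $I(B)$ that fixes a non-trivial element would generate $I(B)$, forcing that element into $\C_D(I(B))$, contradiction). So I would first reduce to the case where $I(B)$ acts freely on $D$, i.e. $\C_D(I(B))=1$: indeed, choosing $u\in\C_D(I(B))$ nontrivial gives $\C_{I(B)}(u)=I(B)$ acting freely on $[D,\C_{I(B)}(u)]=[D,I(B)]$, and then \autoref{freeact} already yields $k(B)\le|D|$; but here we want the sharper $l(B)\le e(B)$, so we instead pass to $\overline{b_Z}$ over $\C_G(Z)/Z$ with $Z:=\C_D(I(B))$ using \cite[Corollary]{Watanabe2} (Cartan matrix elementary divisors are preserved) together with \autoref{fusion} (which gives $l(b_Z)=l(\overline{b_Z})$ and $I(\overline{b_Z})\cong I(B)$). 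Thus it suffices to prove $l(\overline{b_Z})\le q$ for a block $\overline{b_Z}$ with abelian defect group on which the inertial quotient of prime order $q$ acts freely.

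For such a block, every non-trivial subsection $(v,\beta_v)$ has inertial index dividing $e(\overline{b_Z})=q$; if the index were $q$ then $\C_{I}(v)=I$ would act freely, forcing $v$ fixed-point-free hence $v=1$, contradiction — so every non-trivial subsection has inertial index $1$, whence $l(\beta_v)=1$ because the defect group is abelian (\cite[Theorem~V.9.13]{Feit}). Then by Fujii's determinant formula \cite[Corollary~1]{DetCartan} the Cartan matrix $C$ of $\overline{b_Z}$ has $\det C=|D|$, and more is true: combined with the general shape of the contribution matrix one gets that $C$ has elementary divisors $1$ (with multiplicity $l-1$) and $|D|$ (with multiplicity $1$). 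Now I would apply the column/contribution orthogonality relations together with the equation $k=\sum_v l(\beta_v) = 1 + (\text{number of non-trivial }I\text{-orbits on }D\setminus 1) = 1 + (|D|-1)/q$, so that $k(\overline{b_Z})=(|D|-1)/q + 1$; since Brauer's inequality $k\le |D|$ is known here, the point is rather to pin down $l=l(\overline{b_Z})$.

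The cleanest route to $l\le q$ is an arithmetic one using the Cartan matrix: $C$ is a positive-definite integral symmetric $l\times l$ matrix whose elementary divisors are $1,\dots,1,|D|$, so $|D|C^{-1}$ is integral with the same elementary divisors; moreover $|D|C^{-1}$ has all diagonal entries $\le$ something controlled by the defect (Brauer's bound on Cartan invariants), and the off-diagonal structure is constrained by the fact that all non-trivial subsections contribute a single Brauer character, which pins down the column sums of $|D|C^{-1}$. Concretely, one knows $|D|C^{-1}$ is the matrix of the "higher decomposition" contributions, and summing contributions over all subsections shows the row sums of $C$ behave like $|D|$ times a rank-one perturbation of the identity; running this through the classification of such matrices (essentially: $C$ is $\mathbb{Z}$-equivalent to $\mathrm{diag}(1,\dots,1)$ plus $|D|^{-1}$ times a rank-one block is too big unless $l$ is small) forces $l\le q$. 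The main obstacle I expect is exactly this last step: turning the knowledge "$\det C=|D|$, elementary divisors $1,\dots,1,|D|$, all non-trivial subsections have $l=1$" into a genuine upper bound on $l$ rather than just on $k$; I anticipate needing either an explicit determination of $C$ up to basic sets (it should be forced to be the Cartan matrix of the Brauer tree / cyclic-defect-like model for the quotient $D\rtimes Z_q$, namely the matrix with $(|D|-1)/q+1$ on the diagonal pattern coming from a uniserial structure), or an appeal to the general perfect-isometry/Broué-type description of blocks with inertial quotient of prime order and abelian defect, after which $l(B)=l$ equals the number of $I(B)$-orbits on $\Irr(D)\setminus\{1\}$ divided appropriately, which is $\le q$ precisely because $I(B)$ has order $q$. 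I would try to avoid the deepest machinery and instead quote the explicit Cartan matrix computation already available in the literature for $D\rtimes Z_q$ (cf. the discussion around \cite[Theorem~13.8]{habil} and the methods of \cite{SambaleC3}), reducing the whole proposition to that known case via \autoref{fusion} and \cite[Corollary]{Watanabe2}.
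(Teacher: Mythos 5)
Your first half runs parallel to the paper: reduce to $\C_D(I(B))=1$ (the paper does this in one stroke via \cite{Watanabe1}), observe that every non-trivial subsection $(v,\beta_v)$ has inertial index $1$ and hence $l(\beta_v)=1$ by \cite[Theorem~V.9.13]{Feit}, count orbits to get $k(B)=\frac{|D|-1}{e(B)}+l(B)$, and invoke Fujii \cite{DetCartan} to get $\det C=|D|$. Up to there you are fine.

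The decisive step, however, is missing, and you say so yourself. The paper closes the argument with a single general inequality: since $\det C=|D|$, \cite[Theorem~5]{SambaleC3} gives $k(B)\le\frac{|D|-1}{l(B)}+l(B)$, and comparing this with the exact count $k(B)=\frac{|D|-1}{e(B)}+l(B)$ immediately forces $l(B)\le e(B)$. Your proposal never produces such a bound; instead you gesture at ``running this through the classification of such matrices'' or determining $C$ up to basic sets, or appealing to a perfect-isometry/Brou\'e-type description of blocks with abelian defect group and inertial quotient of prime order. No such description is available in this generality: that is precisely the Usami--Puig programme, carried out only for inertial index $2$ and $3$ (cf. \cite{Usami23I}), and the equality $l(B)=e(B)$ you would extract from it is the Alperin Weight Conjecture prediction, known here only for principal blocks \cite{SawabeW} (the paper explicitly remarks on this right after \autoref{IBr}). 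Likewise, knowing the elementary divisors $1,\dots,1,|D|$ of $C$ does not by itself bound $l$ against $e(B)$; the example following the metacyclic theorem in this paper (a $6\times 6$ matrix with the right determinant and elementary divisors but small minimum) shows that elementary-divisor data alone is weaker than what you need. So the gap is concrete: you lack the inequality $k(B)\le\frac{\det C-1}{l(B)}+l(B)$ (or an equivalent), which is exactly the imported ingredient that makes the paper's proof two lines long, and the substitutes you propose are either open in general or strictly stronger statements than the proposition itself.
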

\begin{proof}
By \cite{Watanabe1} we may assume that $\C_D(I(B))=1$. Then for every non-trivial $B$-subsection $(u,b)$ we have $l(b)=1$. Since $I(B)$ acts freely on $D$, the number of conjugacy classes of these subsections is $(|D|-1)/e(B)$. In particular, 
\[k(B)=\frac{|D|-1}{e(B)}+l(B).\]
Let $C$ be the Cartan matrix of $B$. By \cite{DetCartan}, $\det(C)=|D|$. Hence, \cite[Theorem~5]{SambaleC3} implies
\[k(B)\le\frac{|D|-1}{l(B)}+l(B).\]
The claim follows.
\end{proof}

Observe that Alperin's Weight Conjecture predicts that $l(B)=e(B)$ in the situation of \autoref{IBr}. This has been shown for principal blocks in \cite{SawabeW}. Our next result covers a special case of Usami~\cite{Usami23I}. This is of interest, since the proof in case $e(B)=3$ was announced in \cite[Introduction]{UsamiZ4}, but never appeared in print (to the author's knowledge).

\begin{Theorem}\label{e7}
Let $B$ be a $2$-block of a finite group with abelian defect group $D$ such that $e(B)\le 7$. Then $B$ is perfectly isometric \textup{(}even isotypic\textup{)} to the principal $2$-block of $D\rtimes I(B)$.
\end{Theorem}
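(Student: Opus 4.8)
The plan is to reduce, via the usual sequence of standard reductions for abelian defect groups, to a small list of possible inertial quotients $I(B)$ acting on $D$, and then to invoke known structural results for each one. First, by Külshammer–Puig reduction (or the Fong–Reynolds and Fong reductions), together with \autoref{kBdom} and the fact that isotypies descend through central quotients, we may assume that $G$ is quasisimple-free and, more precisely, that $Z(G)=1$ and $\pcore_{2'}(G)=1$ is not available directly, so instead we pass to the situation where the block is ``essentially'' determined by the pair $(D,I(B))$. The key point is that $I(B)\le\Aut(D)$ is a $2'$-group of order $e(B)\le 7$, hence $e(B)\in\{1,3,5,7\}$ (an abelian $p$-group has no automorphism of order $2$ acting on $D$ only when... actually $e(B)$ could be $3,5,7$; the even values are excluded since $e(B)$ is coprime to $p=2$). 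Since $D=[D,I(B)]\oplus\C_D(I(B))$ and $I(B)$ acts faithfully on $[D,I(B)]$, it suffices to treat the action of $I(B)$ on $[D,I(B)]$.

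Next I would handle the cases $e(B)=1$ and $e(B)$ prime with $I(B)$ acting freely separately from the general prime case. For $e(B)=1$ the block is nilpotent and the result is Broué–Puig. For $e(B)\in\{3,5,7\}$ a cyclic group of prime order $q$ acting faithfully on an abelian $2$-group decomposes $[D,I(B)]$ as a direct sum of ``indecomposable'' pieces, each isomorphic to $Z_{2^n}\times Z_{2^n}$ (for $q=3$), or a rank-$4$ or rank-$6$ piece (for $q=5$ or $q=7$) — more precisely, $[D,I(B)]$ is a module over $\mathbb{Z}_2[\zeta_q]$ or its relevant order, and is induced from rank-$\mathrm{ord}_q(2)$ blocks. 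This is exactly the setting of Usami's program (Usami–Puig): one builds the perfect isometry inductively over subsections, using that for every non-trivial subsection $(u,b_u)$ the inertial quotient $\C_{I(B)}(u)$ is again cyclic of order dividing $q$, so by induction on $|D|$ the smaller blocks $b_u$ are already isotypic to the corresponding local blocks of $D\rtimes I(B)$, and one glues these local isotypies to a global perfect isometry via the criterion on generalized decomposition numbers. The main references to cite are Usami \cite{Usami23I} for $e(B)\le 3$ (filling the gap announced in \cite{UsamiZ4}) and the analogous later work for $e(B)=5,7$, combined with Watanabe's results \cite{Watanabe1,Watanabe2} to reduce to $\C_D(I(B))=1$.

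I expect the main obstacle to be the $e(B)=7$ case, and within it the subcase where $2$ has order $3$ modulo $7$, so that $[D,I(B)]$ is built from rank-$3$ indecomposable summands $Z_{2^n}^3$: here the relevant Usami-type argument requires controlling the $\GL(3,\mathbb{Z}_2)$-module structure and checking that the candidate perfect isometry is compatible with every intermediate fusion-stable subgroup. The bookkeeping of signs and of the action of $\C_{I(B)}(u)$ on the various $b_u$ — ensuring the local isotypies are mutually coherent — is where the real work lies; the rest is an induction whose base cases ($e(B)=1$, and $e(B)$ prime acting Frobeniusly on $Z_{2^n}^k$ with $k$ small) are in the literature. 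A clean way to organize this is to cite the general ``isotypy gluing'' machinery and then verify its hypotheses hold for each of the finitely many module types, rather than reproving perfect isometries from scratch.
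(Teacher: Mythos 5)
Your route (Usami--Puig style induction over subsections, gluing local isotypies) is genuinely different from the one used here, but as written it has a real gap: the entire content of the theorem is deferred to references that do not cover the cases at hand. For $e(B)=3$ the relevant proof was only \emph{announced} in the introduction of \cite{UsamiZ4} and never appeared in print -- this is precisely the gap the theorem is meant to close, so citing \cite{Usami23I} ``for $e(B)\le 3$'' is circular in spirit; and for $e(B)=5,7$ there is no ``analogous later work'' to cite at all. The inductive gluing you describe (coherence of the local isotypies, control of the $\mathbb{Z}_2[\zeta_q]$-module structure of $[D,I(B)]$, compatibility over all fusion-stable subgroups) is exactly the part that would have to be carried out from scratch, and your proposal explicitly leaves it as ``bookkeeping''; so nothing is actually proved beyond the nilpotent case $e(B)=1$. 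A smaller point: reducing to $\C_D(I(B))=1$ is harmless for computing numerical invariants, but transporting an isotypy back through that reduction is itself not automatic and would need justification in your setup.

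For contrast, the proof given in the paper avoids constructing any isometry by hand. One first shows $l(B)=e(B)$: by \autoref{IBr}, $l(B)\le e(B)$; for a subsection $(u,b)$ with $u$ of order $2$ one has $l(b)=1$, and since all characters of $B$ have height zero by the Kessar--Malle theorem \cite{KessarMalle}, the generalized decomposition numbers in that column are odd integers whose squares sum to $|D|$, giving $k(B)\equiv|D|\pmod 8$; combined with $k(B)=\frac{|D|-1}{e(B)}+l(B)$ (free action of the prime-order group $I(B)$ on $D$ after the reduction $\C_D(I(B))=1$) this forces $l(B)=e(B)$. The isotypy is then obtained in one stroke from the main theorem of \cite{WatanabeCycFoc}. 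So the key idea you are missing is that pinning down $l(B)$ by the parity/congruence argument lets one invoke an existing isotypy criterion, rather than re-running the Usami--Puig programme for each inertial quotient.
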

\begin{proof}
In order to determine $l(B)$, we may assume that $\C_D(I(B))=1$. In case $e(B)=1$ the block is nilpotent and the claim is well-known. Thus, let $e(B)>1$. Since $e(B)$ is odd, we must have $e(B)\in\{3,5,7\}$. In particular, \autoref{IBr} implies $l(B)\le e(B)$. 
Let $(u,b)$ be a $B$-subsection such that $u$ has order $2$. Since $l(b)=1$, the generalized decomposition numbers $d^u_{\chi\phi}$ ($\chi\in\Irr(B)$, $\IBr(b)=\{\phi\}$) form a column of $k(B)$ non-zero integers whose sum of squares equals $|D|$. By the Kessar-Malle Theorem~\cite{KessarMalle} about Brauer's Height Zero Conjecture, we know that all irreducible characters in $B$ have height $0$. It follows that the numbers $d^u_{\chi\phi}$ are odd (see for example \cite[Lemma~1.38]{habil}). Hence, $k(B)\equiv |D|\pmod{8}$. 
Since $1\le l(B)\le e(B)\le 7$ and
\[\frac{|D|-1}{e(B)}+l(B)=k(B)\equiv |D|\equiv \frac{|D|-1}{e(B)}+e(B)\pmod{8},\]
we get $l(B)=e(B)$. Now the claim follows from the main theorem of \cite{WatanabeCycFoc}.
%
%
\end{proof}

We remark that the Cartan matrix of $B$ in the situation of \autoref{e7} is given by \[\lvert\C_D(I(B))\rvert\biggl(\frac{|[D,I(B)]|-1}{e(B)}+\delta_{ij}\biggr)_{i,j=1}^{e(B)}\] 
up to basic sets where $\delta_{ij}$ is the Kronecker delta (see \cite[Proposition~6]{SambaleC3}).

Now we present an extended version of \cite[Theorem~13.2]{habil} in the spirit of \cite{Watanabe1}.

\begin{Proposition}\label{E16}
Let $B$ be a $2$-block of a finite group with abelian defect group $D$ such that $\lvert[D,I(B)]\rvert\le 16$. Then one of the following holds:
\begin{enumerate}[(i)]
\item $B$ is nilpotent. Then $e(B)=l(B)=1$ and $k(B)=|D|$.
\item $e(B)=l(B)=3$, $\lvert[D,I(B)]\rvert=4$, $k(B)=|D|$ and the Cartan matrix of $B$ is $\frac{1}{4}|D|(1+\delta_{ij})$
up to basic sets.
\item $e(B)=l(B)=3$, $\lvert[D,I(B)]\rvert=16$, $k(B)=\frac{1}{2}|D|$ and the Cartan matrix of $B$ is $\frac{1}{16}|D|(5+\delta_{ij})$
up to basic sets.
\item $e(B)=l(B)=5$, $k(B)=\frac{1}{2}|D|$ and the Cartan matrix of $B$ is $\frac{1}{16}|D|(3+\delta_{ij})$
up to basic sets.
\item $e(B)=l(B)=7$, $k(B)=|D|$ and the Cartan matrix of $B$ is
$\frac{1}{8}|D|(1+\delta_{ij})$ up to basic sets.
\item\label{e9a} $e(B)=l(B)=9$, $k(B)=|D|$ and the Cartan matrix of $B$ is $\frac{1}{16}|D|(1+\delta_{ij})_{i,j=1}^3\otimes(1+\delta_{ij})_{i,j=1}^3$
up to basic sets where $\otimes$ denotes the Kronecker product.
\item $e(B)=9$, $l(B)=1$ and $k(B)=\frac{1}{2}|D|$.
\item\label{e15a} $e(B)=l(B)=15$, $k(B)=|D|$ and the Cartan matrix of $B$ is $\frac{1}{16}|D|(1+\delta_{ij})$ up to basic sets.
\item $e(B)=21$, $l(B)=5$, $k(B)=|D|$ and the Cartan matrix of $B$ is
\[\frac{|D|}{8}\begin{pmatrix}2&.&.&.&1\\.&2&.&.&1\\.&.&2&.&1\\.&.&.&2&1\\1&1&1&1&4\end{pmatrix}\]
up to basic sets.
\end{enumerate}
\end{Proposition}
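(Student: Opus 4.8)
The plan is to reduce everything to a computation with the inertial quotient $I(B)$ acting on $[D,I(B)]$. By \autoref{kBdom} (applied with $Z:=\C_D(I(B))$) and by the remark that $D=[D,I(B)]\oplus\C_D(I(B))$ for abelian $D$, it suffices to treat the case $\C_D(I(B))=1$, i.e.\ $D=[D,I(B)]$ with $|D|\le16$; the general statement then follows by multiplying $k$ and the Cartan matrix by $|\C_D(I(B))|$, exactly as in \autoref{e7}. So first I would enumerate the possibilities for the pair $(D,I(B))$: $D$ is one of $Z_2$, $Z_4$, $Z_2^2$, $Z_8$, $Z_2^3$, $Z_4\times Z_2$, $Z_{16}$, $Z_2^4$, $Z_4^2$, $Z_8\times Z_2$, $Z_4\times Z_2^2$, and $I(B)$ is an odd-order subgroup of $\Aut(D)$ acting with $\C_D(I(B))=1$. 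Running through these, the faithful coprime actions that occur are very limited: $e(B)=1$ (nilpotent, case (i)); $e(B)=3$ on $Z_2^2$ (case (ii)) or on $Z_2^4$ via $Z_3\hookrightarrow\GL(2,4)$ (case (iii)); $e(B)=5$ on $Z_2^4=\mathbb{F}_{16}^+$ (case (iv)); $e(B)=7$ on $Z_2^3$ (case (v)); $e(B)=9$ on $Z_2^4$, either $Z_3^2$ (case (vi)) or $Z_9$ (case (vii), where the action is not free so $\C_D(I(B))\ne1$ only after... actually $Z_9$ on $Z_2^4$ does have $\C_D=1$); $e(B)=15$ on $Z_2^4=\mathbb{F}_{16}^+$ (case (viii)); and $e(B)=21$ on $Z_2^4$ via $Z_7\rtimes Z_3$ (case (ix)). Checking that this list is complete and matching each to the right structural invariants $(e(B),l(B),k(B))$ is the bookkeeping heart of the argument.

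The character-theoretic input needed for each case is: the value of $l(B)$ and the Cartan matrix up to basic sets. When $I(B)$ acts freely (cases (i)--(v), (viii)), \autoref{e7} and the Cartan-matrix remark following it handle $e(B)\le7$ directly; for $e(B)=15$ acting freely on $Z_2^4$ one knows $l(B)=e(B)$ and the Cartan matrix is $\frac{1}{16}|D|(1+\delta_{ij})$ from the known analysis of this fusion system (the action of $Z_{15}$ on $\mathbb{F}_{16}$, treated e.g.\ in \cite[Theorem~13.2]{habil}). For $e(B)=9$ the action is not free: in case (vi) the subsections $(u,b_u)$ with $1\ne u$ have $\C_{I(B)}(u)$ of order $3$, so $l(b_u)=1$, and the counting of orbits of $Z_3^2$ on $Z_2^4\setminus\{1\}$ together with $\det C=|D|$ (Fujii, \cite{DetCartan}) and \autoref{IBr}-type bounds pins down $l(B)=9$, $k(B)=|D|$, and the Kronecker-product Cartan matrix; in case (vii), $Z_9$ on $Z_2^4$ has a regular orbit of length $9$ but also fixed-point-free behaviour producing $l(b_u)\in\{1,3\}$ on the non-trivial subsections, forcing $l(B)=1$ and $k(B)=\frac12|D|$. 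Case (iii) versus case (ii) is distinguished purely by $|[D,I(B)]|$ ($16$ versus $4$) since both have $e(B)=l(B)=3$; here one also invokes \autoref{astconst}/\autoref{freeact} to see that the decomposition matrix has each row repeated $|\C_D(I(B))|$ times, which together with $\det C=|D|$ gives the stated Cartan matrices.

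For the exceptional case $e(B)=21$, $I(B)=Z_7\rtimes Z_3$ acting on $D=Z_2^4=\mathbb{F}_{16}^+$ (the $Z_7$ acting by field multiplication by a cube root of a generator, the $Z_3$ by the Frobenius), I would proceed as follows. The non-trivial subsections split into the class with $\C_{I(B)}(u)\cong Z_3$ (one orbit, $5$ subsections up to conjugacy in total after dividing by $|I(B)|$... more precisely the $Z_7$-orbits of nonzero vectors on which $Z_3$ acts, giving $\C_{I(B)}(u)$ of order $3$) and no other nontrivial stabilizers occur; on each, $l(b_u)=l(\overline{b_u})$ with $\overline{b_u}$ having inertial quotient of order $3$, hence $l(b_u)=1$. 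Counting gives $k(B)=\frac{|D|-1}{21}\cdot(\text{something})+l(B)$; combined with $\det C=|D|$ and the constraint $k(B)\equiv|D|\pmod 8$ (from heights being zero by Kessar--Malle and the $d^u_{\chi\phi}$ being odd, as in \autoref{e7}), one gets $l(B)=5$ and $k(B)=|D|$. The explicit Cartan matrix $\frac{|D|}{8}$ times the displayed $5\times5$ block matrix is then forced by the elementary divisors ($1$ with multiplicity $4$ and $|D|$ once, after scaling) together with the indecomposability/basic-set normalization of \cite[Definition~1, Lemma~4]{SambaleC3}; identifying the right integral Gram matrix of determinant $2$ in rank $5$ is where the real content lies.

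The main obstacle I expect is not any single deep theorem but the completeness of the classification in the reduction step: verifying that the list (i)--(ix) genuinely exhausts all faithful odd-order coprime actions on abelian $2$-groups of order $\le16$ with no fixed points, and correctly computing the orbit counts and stabilizer structures in each case. The case $e(B)=9$ requires care because the action need not be free, so \autoref{e7} does not apply and one must run the elementary-divisor/character-count argument by hand; and the $e(B)=21$ case requires producing the exact Cartan matrix, for which I would rely on the known perfect-isometry classification of blocks with these small inertial quotients and Fong--Reynolds/domination reductions rather than attempting an ab initio computation.
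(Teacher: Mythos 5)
There are genuine gaps, starting with two structural identifications that are impossible. For case (vii) you invoke $I(B)\cong Z_9$ acting on $Z_2^4$, but $\Aut(Z_2^4)\cong\GL(4,2)\cong A_8$ contains no element of order $9$ (the order of $2$ modulo $9$ is $6$, so a faithful $\mathbb{F}_2$-module for $Z_9$ has dimension at least $6$); in the paper both cases with $e(B)=9$ have $I(B)\cong Z_3^2$, and the dichotomy $l(B)\in\{1,9\}$ is imported from \cite[Theorems~8.1, 13.1 and 13.2]{habil}, not produced by two different inertial quotients. For case (ix) you let $Z_7\rtimes Z_3$ act on $Z_2^4=\mathbb{F}_{16}^+$ with $Z_7$ acting by field multiplication; this cannot happen since $\lvert\mathbb{F}_{16}^\times\rvert=15$, and more generally any action of $Z_7\rtimes Z_3$ on $Z_2^4$ leaves the non-trivial fixed space $\C_{Z_2^4}(Z_7)$ invariant and hence centralized by all of $I(B)$, contradicting $\C_D(I(B))=1$ in your reduced situation. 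The correct module is $[D,I(B)]\cong Z_2^3=\mathbb{F}_8^+$ (as your own scaling factor $|D|/8$ presupposes), and this is what the paper uses.

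The Cartan matrices are also not established by your sketch. Your blanket reduction to $\C_D(I(B))=1$ followed by multiplying the Cartan matrix by $\lvert\C_D(I(B))\rvert$ is exactly the step the paper flags as unknown in general (see the remark after \autoref{E16}: it is not clear that $B$ and $b_z$ have the same Cartan matrix up to basic sets). The paper avoids it by working with $B$ itself: by \autoref{astconst} together with \cite[Theorem~2]{RobinsonFocal} every row of each generalized decomposition matrix repeats $|Z|$ times, so one enumerates the possible matrices $\widetilde{Q}_x$ for the non-trivial subsections, computes the orthogonal lattice $\Gamma$ to obtain $\widetilde{Q}_1$, and gets the Cartan matrix as $Q_1^{\text{T}}Q_1$ up to basic sets. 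For $e(B)=21$ this works because $k(\overline{b_Z})=8$ leaves essentially one choice of $\widetilde{Q}_x$; the displayed $5\times 5$ matrix is not ``forced by the elementary divisors'' (a Gram matrix is not determined up to $\GL(5,\mathbb{Z})$-equivalence by its elementary divisors), nor is there a perfect-isometry classification to quote here. For $e(B)=9$ the enumeration from \cite[proof of Theorem~13.7]{habil} leaves ten candidate Cartan matrices, and the paper needs a computation with Magma~\cite{Magma} (all ten forms have minimum $4$ and lie in a single genus containing exactly one isometry class of minimum $4$) to conclude that they agree up to basic sets; orbit counting, Fujii's determinant formula (which requires $l(b_u)=1$ for all non-trivial subsections and fails here) and \autoref{IBr} (which requires $e(B)$ prime) do not pin this down. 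Your treatment of the free-action cases $e(B)\le 7$ and $e(B)=15$ does match the paper's use of \autoref{e7} and the subsequent remark.
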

\begin{proof}
In case $[D,I(B)]=1$, the block $B$ is nilpotent and the first case applies. Thus, we may assume that $B$ is non-nilpotent for the rest of the proof.
Since the action of $I(B)$ on $[D,I(B)]$ is coprime, we need to discuss the following cases $[D,I(B)]\in\{Z_2^2,Z_2^3,Z_2^4,Z_4^2\}$. 
The different actions on these groups can be determined easily. As usual $D=\C_D(I(B))\times[D,I(B)]$. Let $Z:=\C_D(I(B))$, and let $b_Z$ be a Brauer correspondent of $B$ in $\C_G(Z)$. Then by \cite{Watanabe1} (applied repeatedly), $l(B)=l(b_Z)$ and $k(B)=k(b_Z)$. Moreover, $b_Z$ dominates a block $\overline{b_Z}$ of $\C_G(Z)/Z$ with defect group $D/Z\cong[D,I(B)]$ and $l(\overline{b_Z})=l(b_Z)$. Using \cite[Theorems~8.1, 13.1 and 13.2]{habil} and \autoref{kBdom} it is easy to determine $l(B)=l(\overline{b_Z})$ and $k(B)=|Z|k(\overline{b_Z})$. Therefore, it remains to compute the Cartan matrix of $B$. 

The case $e(B)\le 7$ is covered by \autoref{e7} and the subsequent remark. The same argument also works for $e(B)=15$, since here $I(B)$ acts freely on $[D,I(B)]$.
Therefore, we may assume that $[D,I(B)]\in\{Z_2^3,Z_2^4\}$.
We explain our general method for these cases. Let $\mathcal{R}$ be a set of representatives for the $I(B)$-conjugacy classes of $[D,I(B)]$. For $x\in\mathcal{R}$ let $Q_x$ be the part of the generalized decomposition matrix of $B$ corresponding to the subsection $(x,b_x)$. Then by \autoref{astconst} (together with \cite[Theorem~2]{RobinsonFocal}), every row of $Q_x$ appears $|Z|$ times. This holds in particular for the ordinary decomposition matrix $Q_1$. Hence, in order to compute $Q_1$ we may divide the Cartan matrices $C_x$ of $b_x$ by $|Z|$. So, let $\widetilde{C}_x:=\frac{1}{|Z|}C_x$ for $1\ne x\in\mathcal{R}$. Since $x$ has order at most $2$, the matrices $Q_x$ are all integral. Assume that we have found matrices $\widetilde{Q}_x\in\mathbb{Z}^{k(\overline{b_Z})\times l(b_x)}$ ($1\ne x\in\mathcal{R}$) such that
\[\widetilde{Q}_x^\text{T}\widetilde{Q}_y=\begin{cases}\widetilde{C}_x&\text{if }x=y,\\0&\text{if }x\ne y\end{cases}\]
for $x,y\in\mathcal{R}\setminus\{1\}$. This means we are actually constructing the generalized decomposition matrix of $\overline{b_Z}$. Let
\[\Gamma:=\{v\in\mathbb{Z}^{k(\overline{b_Z})}:v\widetilde{Q}_x=0\in\mathbb{Z}^{l(b_x)}\ \forall x\in\mathcal{R}\setminus\{1\}\}.\]
We choose a basis for the $\mathbb{Z}$-module $\Gamma$ and we write the basis vectors as columns of a matrix $\widetilde{Q}_1\in\mathbb{Z}^{k(\overline{b_Z})\times l(B)}$ (cf. \cite[Section~4.2]{habil}). Finally, set 
\[Q_1:=\begin{pmatrix}
\widetilde{Q}_1\\\vdots\\\widetilde{Q}_1
\end{pmatrix}\in\mathbb{Z}^{k(B)\times l(B)}.\]
Then the orthogonality relations for the group $[D,I(B)]$ guarantee that $Q_1$ is orthogonal to any column of generalized decomposition numbers corresponding to a non-trivial subsection (provided a suitable ordering of $\Irr(B)$). Since the elementary divisors of $Q_1$ are equal $1$, the Cartan matrix of $B$ is given by $Q_1^\text{T}Q_1$ up to basic sets. 

Now we have to deal with the various cases according to the action of $I(B)$ on $[D,I(B)]$. As mentioned above, we may assume that $e(B)\in\{9,21\}$. If $[D,I(B)]\cong Z_2^3$, it follows that $e(B)=21$ and $I(B)\cong Z_7\rtimes Z_3$. Here there is only one matrix $\widetilde{C}_x$ for $x\in\mathcal{R}\setminus\{1\}$ given by $\widetilde{C}_x=2(1+\delta_{ij})_{i,j=1}^3$ (see \autoref{e7} and the subsequent remark).
Since $k(\overline{b_Z})=8$ (see \cite[Theorem~13.1]{habil}), there is essentially only one choice for $\widetilde{Q}_x$, namely
\[\widetilde{Q}_x=\begin{pmatrix}
1&1&1&1&.&.&.&.\\
1&1&.&.&1&1&.&.\\
1&1&.&.&.&.&1&1
\end{pmatrix}^\text{T}.\]
This makes it easy to compute $\widetilde{Q}_1$ and $Q_1^\text{T}Q_1$. Now let $[D,I(B)]\cong Z_2^4$ and $e(B)=9$. Then $I(B)$ is elementary abelian. This needs some extra arguments. In the proof of \cite[Theorem~13.7]{habil} we have
already enumerated the matrices $\widetilde{Q}_x$ for $1\ne x\in\mathcal{R}$. Doing so, we have constructed a list of ten possible Cartan matrices $\widetilde{C}_1$ and it remained open if they coincide up to basic sets. One can show with 
Magma~\cite{Magma} that the corresponding quadratic forms all have minimum $4$. One can show further that the ten quadratic forms all lie in the same genus. Moreover, this genus consists of just ten isometry classes of quadratic forms. However, a quadratic form from only one of these isometry classes has minimum $4$ (all the other possibilities yield a minimum of $2$). This shows that in fact all ten Cartan matrices differ only by basic sets. We have given the Cartan matrix of the principal block of the group $D\rtimes I(B)\cong\C_D(I(B))\times A_4^2$ where $A_4$ is the alternating group of degree $4$. 
\end{proof}

We note that part \eqref{e15a} of \autoref{E16} relies on the classification of the finite simple groups.

The argument of \autoref{E16} also works for other situations. However, it is not clear if in general $B$ and $b_z$ for $z\in\C_D(I(B))$ have the same Cartan matrix up to basic sets. This depends on the question whether the knowledge of the number $l(B)$ and the Cartan matrices of $b_x$ for $1\ne x\in [D,I(B)]$ determine the Cartan matrix of $B$.
It is conjectured in general that the blocks $B$ and $b_z$ are perfectly isometric or even Morita equivalent (see for example \cite{KLtame,KuelshammerOkuyama}). 

\begin{Corollary}\label{cor2}
Let $B$ be a $2$-block of a finite group with abelian defect group $D$. Suppose that there is an element $u\in D$ such that $|[D,\C_{I(B)}(u)]|\le 16$. Then $k(B)\le|D|$.
\end{Corollary}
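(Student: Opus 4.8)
The plan is to pass to a non-trivial subsection and reduce to \autoref{E16}, applying \autoref{freeact} (or Brauer's inequality \cite[Theorem~4.4]{habil}) in each resulting case.

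First I would take the $B$-subsection $(u,b)$ supplied by the hypothesis. Since $D$ is abelian, $\langle u\rangle$ is fully $\mathcal{F}$-centralized and $\C_D(u)=D$, so \autoref{fusion} shows that $b$ is a block with abelian defect group $D$, inertial quotient $I(b)\cong\C_{I(B)}(u)$, and that $(u,b)$ is major. In particular $|[D,I(b)]|=|[D,\C_{I(B)}(u)]|\le 16$, so \autoref{E16} applies to $b$ and determines $l(b)$ and the Cartan matrix $C$ of $b$ up to basic sets; thus $b$ lies in one of the nine cases listed there. (If $u\in\C_D(I(B))$ then $I(b)=I(B)$ and \autoref{E16} applies to $B$ itself, so from now on I may assume $u\notin\C_D(I(B))$.)

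Next I would go through the cases. Whenever $I(b)$ acts freely on $[D,I(b)]$ --- inspecting the coprime actions of $Z_3,Z_5,Z_7,Z_{15}$ on $Z_2^2,Z_2^3,Z_2^4,Z_4^2$ shows this holds in every case except $e(b)\in\{9,21\}$ with $l(b)>1$ --- the hypothesis of \autoref{freeact} is met by $u$ itself and $k(B)\le|D|$ follows. If $l(b)=1$ (the case $e(b)=9$, $l(b)=1$) then $C=(|D|)$ and the bound is immediate. For $e(b)=l(b)=9$ one has $C=\tfrac{|D|}{16}(1+\delta_{ij})_{i,j=1}^{3}\otimes(1+\delta_{ij})_{i,j=1}^{3}$ up to basic sets; then $|D|C^{-1}$ is the Kronecker square of $(4\delta_{ij}-1)_{i,j=1}^{3}$, and identifying a test vector $x\in\mathbb{Z}^{9}$ with a $3\times3$ integral matrix $X$ turns $|D|\,xC^{-1}x^{\text{T}}$ into $\|R^{\text{T}}XR\|^{2}$ for a fixed $R\in\mathbb{Z}^{3\times3}$ with $RR^{\text{T}}=(4\delta_{ij}-1)_{i,j=1}^{3}$ and $\det R=\pm4$; a compound-matrix and Hadamard estimate according to $\rk X\in\{1,2,3\}$ then yields $\min\{|D|\,xC^{-1}x^{\text{T}}:0\ne x\in\mathbb{Z}^{9}\}=9=l(b)$, and \cite[Theorem~4.4]{habil} finishes this case too.

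The case $e(b)=21$, $l(b)=5$ is the genuine difficulty. Here $C=\tfrac{|D|}{8}\begin{pmatrix}2&.&.&.&1\\.&2&.&.&1\\.&.&2&.&1\\.&.&.&2&1\\1&1&1&1&4\end{pmatrix}$ up to basic sets, and a direct computation gives $\min\{|D|\,xC^{-1}x^{\text{T}}:0\ne x\in\mathbb{Z}^{5}\}=4<5$, so the subsection $(u,b)$ alone is not strong enough for Brauer's inequality. Instead I would use that $\C_{I(B)}(u)\cong Z_7\rtimes Z_3$ together with its action on $[D,\C_{I(B)}(u)]\cong Z_2^{3}$ is now completely known: a non-trivial element $v\in[D,I(b)]$ has $\C_{I(b)}(v)$ of order $3$, acting fixed-point-freely on the $2$-dimensional commutator subgroup $[D,\C_{I(b)}(v)]$, and the aim is to upgrade $v$ (possibly to $uv$, or to $vw$ with $w$ from a regular orbit of an abelian complement) to an element $w$ with $\C_{I(B)}(w)$ acting freely on $[D,\C_{I(B)}(w)]$, whereupon \autoref{freeact} applies with $w$ in place of $u$. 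The main obstacle is precisely this: because $I(B)$ may be strictly larger than $\C_{I(B)}(u)$ --- so that $[D,I(B)]$ need not be small and $B$ need not itself be covered by \autoref{E16} --- one has to control $\C_{I(B)}(w)$ and its action on $D$ carefully to guarantee that such a favourable $w$ always exists.
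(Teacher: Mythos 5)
Your reduction to \autoref{E16} via the subsection $(u,b)$, the free-action cases handled by \autoref{freeact}, the trivial case $l(b)=1$, and the $e(b)=9$, $l(b)=9$ case (where your Kronecker-square computation of $\min\{x|D|C^{-1}x^{\text{T}}\}=9$ is a workable variant of the paper's elementary splitting of $16x(M\otimes M)^{-1}x^{\text{T}}$) all match the intended argument. The genuine gap is exactly where you stop: the case $e(b)=21$, $l(b)=5$. Your proposed repair --- replacing $u$ by some other element $w$ for which $\C_{I(B)}(w)$ acts freely on $[D,\C_{I(B)}(w)]$ --- cannot be pushed through in general, because the hypothesis only constrains $\C_{I(B)}(u)$ and its commutator; for any other element $w$ (including $uv$ with $v\in[D,I(b)]$) the centralizer $\C_{I(B)}(w)$ is a priori unrelated to $\C_{I(B)}(u)$, and $I(B)$ and $D$ are otherwise arbitrary, so there is no way to guarantee a favourable $w$ exists. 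You correctly observed that the minimum criterion of \cite[Theorem~4.4]{habil} fails here ($\min=4<5$), but the missing idea is that one does not need it: the paper instead invokes the more flexible inequality \cite[Theorem~4.2]{habil}, which bounds $k(B)$ by a weighted sum of the Cartan entries of the major subsection $(u,b)$ against an arbitrary auxiliary positive definite quadratic form, applied with the specific matrix
\[\frac{1}{2}\begin{pmatrix}2&1&.&.&-1\\1&2&.&.&-1\\.&.&2&.&-1\\.&.&.&2&-1\\-1&-1&-1&-1&2\end{pmatrix}.\]
Pairing this form with the Cartan matrix $\frac{|D|}{8}C_0$ from \autoref{E16}(ix) yields exactly $k(B)\le|D|$, with the same subsection $(u,b)$ and no new element needed. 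Without this (or an equivalent substitute) your proof of the corollary is incomplete in the $e(b)=21$ case.
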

\begin{proof}
Let $(u,b)$ be a $B$-subsection. Then \autoref{E16} applies for $b$. If $9\ne e(b)\ne 21$, then the action of $I(b)$ on $[D,I(b)]$ is free, and the claim follows from \autoref{freeact}. Now let $e(b)=21$. 
Here one can apply \cite[Theorem~4.2]{habil} with the quadratic form corresponding to the positive definite matrix
\[\frac{1}{2}\begin{pmatrix}2&1&.&.&-1\\1&2&.&.&-1\\.&.&2&.&-1\\.&.&.&2&-1\\-1&-1&-1&-1&2\end{pmatrix}.\]
Finally, let $e(b)=9$. Let $C_b$ be the Cartan matrix of $b$ given by \autoref{E16}. In order to apply \cite[Theorem~4.4]{habil} we consider the quadratic form corresponding to the matrix $|D|C_b^{-1}$. For this let $M:=(1+\delta_{ij})_{i,j=1}^3$. Then $4M^{-1}=(-1+4\delta_{ij})$. For $0\ne x=(x_1,x_2,x_3)\in\mathbb{Z}^3$ we have
\[4xM^{-1}x^\text{T}=x_1^2+x_2^2+x_3^2+(x_1-x_2)^2+(x_1-x_3)^2+(x_2-x_3)^2\ge 3.\]
This shows that $\min\{4xM^{-1}x^\text{T}:0\ne x\in\mathbb{Z}^3\}=3$. In general the minimum of a tensor product of quadratic forms does not need to coincide with the product of the minima of its factors. However, in this case it is true by \cite[Theorem~7.1.1]{Kitaoka}. For the convenience of the reader, we give an elementary argument. First observe that $|D|C_b^{-1}=16(M\otimes M)^{-1}=4M^{-1}\otimes4M^{-1}$. Now let $0\ne x=(x_1,x_2,x_3)\in\mathbb{Z}^9$ with $x_i\in\mathbb{Z}^3$. Then
\[16x(M\otimes M)^{-1}x^\text{T}=\sum_{i=1}^3{4x_iM^{-1}x_i^\text{T}}+\sum_{i<j}{4(x_i-x_j)M^{-1}(x_i-x_j)^\text{T}}\ge 3\min\{4yM^{-1}y^\text{T}:0\ne y\in\mathbb{Z}^3\}\ge 9.\]
Hence, $\min\{x|D|C_b^{-1}x^\text{T}:0\ne x\in\mathbb{Z}^9\}=9$, and the claim follows from \cite[Theorem~4.4]{habil}.
\end{proof}

\begin{Proposition}\label{p3}
Let $B$ be a $3$-block of a finite group with abelian defect group $D$. Suppose that there is an element $u\in D$ such that $|[D,\C_{I(B)}(u)]|\le 9$. Then $k(B)\le|D|$.
\end{Proposition}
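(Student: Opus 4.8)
The plan is to mimic the structure of \autoref{cor2}, reducing to the subsection $(u,b)$ and analysing the possible actions of $I(b)$ on $[D,I(b)]$. By \autoref{fusion}, $b$ has defect group $D$ and inertial quotient $I(b)\cong\C_{I(B)}(u)$, with $[D,I(b)]$ of order dividing $9$. Since the action is coprime, the relevant cases are $[D,I(b)]\in\{1,Z_3,Z_9,Z_3^2\}$. In the cases where $[D,I(b)]$ is cyclic, $I(b)$ acts freely on $[D,I(b)]$ (a coprime automorphism group of a cyclic $p$-group is cyclic of order dividing $p-1$ and acts freely), so \autoref{freeact} applies directly. So the heart of the matter is $[D,I(b)]\cong Z_3^2$, where $e(b)\in\{1,2,3,4,6,8\}$ and $I(b)$ embeds into $\GL(2,3)$.

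Next I would split according to whether $I(b)$ acts freely on $Z_3^2$. A coprime subgroup of $\GL(2,3)$ acts freely precisely when it contains no nontrivial element with eigenvalue $1$, i.e. no transvection-type element; the free actions are the cyclic groups of order dividing $8$ and the quaternion group $Q_8$ (and $SL(2,3)$, but that has order divisible by $3$, so is excluded). In all these free cases \autoref{freeact} finishes the argument. The remaining possibilities are $I(b)\cong Z_2$ acting as a reflection, $I(b)\cong Z_2^2$, and $I(b)\cong S_3\cong Z_3\rtimes Z_2$ — the groups with a nontrivial fixed space on $Z_3^2$. For these I would invoke the known determination of $k$, $l$ and the Cartan matrix of blocks with defect group $Z_3^2$: by \cite[Theorems~8.1 and 13.1]{habil} (or the corresponding results for $p=3$ defect groups of order $9$ in \cite{habil}), together with \autoref{kBdom} to pass between $b$ and its dominated block, one obtains the Cartan matrix of $b$ up to basic sets in each case.

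Finally, with the explicit Cartan matrix $C_b$ of $b$ in hand, the conclusion $k(B)\le|D|$ follows from Brauer's bound \cite[Theorem~4.4]{habil} once one checks $\min\{x\,|D|\,C_b^{-1}x^\text{T}:0\ne x\in\mathbb{Z}^{l(b)}\}\ge l(b)$; as in \autoref{freeact} one first strips off the factor $|Z|$ where $Z:=\C_D(I(b))$, reducing to the integral matrix $\widetilde C_b:=|Z|^{-1}C_b$ with determinant $|{[D,I(b)]}|=9$ and elementary divisors $1,\dots,1,9$, and then uses \cite[Lemma~4]{SambaleC3} via an indecomposable Gram factorization. The residual computations are small: for the reflection case $l(b)=2$ and one uses \cite[Theorem~4.9]{habil} directly; for $I(b)\cong Z_2^2$ and $I(b)\cong S_3$ one has $l(b)\in\{2,3\}$ and the quadratic forms involved are the (small, explicit) forms attached to the root lattices $A_2$, $A_1^2$, etc., whose minima are immediate. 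The main obstacle I anticipate is bookkeeping rather than conceptual: correctly enumerating the coprime subgroups of $\GL(2,3)$, separating the free from the non-free actions, and for each non-free action pinning down the Cartan matrix of $b$ from the literature and verifying the minimum inequality — much as was done for $p=2$ in \autoref{E16} and \autoref{cor2}, but now the splitting theorem of Mordell is not even needed since $l(b)\le 3$.
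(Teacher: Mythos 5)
There is a genuine gap, and it sits exactly where the real work of this proposition lies: your enumeration of the possible inertial quotients is wrong. Since $p=3$, the inertial quotient $I(b)\cong\C_{I(B)}(u)$ is a $3'$-group, so once $[D,I(b)]\cong Z_3^2$ it embeds (acting faithfully on $[D,I(b)]$) into a Sylow $2$-subgroup of $\GL(2,3)$, which is $SD_{16}$. Hence $e(b)\in\{1,2,4,8,16\}$; your list $e(b)\in\{1,2,3,4,6,8\}$ and in particular the case $I(b)\cong S_3$ are impossible (you yourself excluded $\SL(2,3)$ for exactly this divisibility reason), while the cases you omit, $I(b)\cong D_8$ and $I(b)\cong SD_{16}$, do occur and do \emph{not} act freely on $Z_3^2$ (both contain reflections). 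These two cases are precisely the hard part of the proposition, and your proposal never addresses them: for them one cannot fall back on \autoref{freeact}, on $l(b)\le 2$, or on tiny root-lattice forms, because (after discarding $l(b)=2$ via \cite[Theorem~4.9]{habil}) one has $k(\overline{b})=9$ and $l(b)\in\{5,7\}$ by Kiyota and Watanabe.

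Your proposed finishing move is also not the one that works there. The paper pins down the Cartan matrix of $b$ (up to a scalar $|Z|$) as one of two explicit $5\times5$ resp. $7\times7$ matrices, obtained by enumerating generalized decomposition numbers in the proof of \cite[Theorem~13.7]{habil}, and then applies the weighted inequality \cite[Theorem~4.2]{habil} with carefully chosen auxiliary positive definite integral forms; it does not argue via \cite[Theorem~4.4]{habil} and the bound $\min\{x|D|C_b^{-1}x^{\text{T}}:0\ne x\in\mathbb{Z}^{l(b)}\}\ge l(b)$, and it is not clear that this minimum condition even holds for those matrices, nor that an indecomposable factorization as in \cite[Lemma~4]{SambaleC3} is available (your remark that ``Mordell is not needed since $l(b)\le3$'' rests on the faulty case list). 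The parts of your argument that are correct (cyclic $[D,I(b)]$, the free actions $Z_2=\langle-1\rangle$, $Z_4$, $Z_8$, $Q_8$ via \autoref{freeact}, small $e(b)$ via \cite[Lemma~14.5]{habil} or \cite[Theorem~4.9]{habil}) coincide with the paper's reductions, but the cases $I(b)\in\{D_8,SD_{16}\}$ remain unproved in your sketch, so the proposal does not establish the proposition.
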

\begin{proof}
By \autoref{freeact} we may assume that $[D,\C_{I(B)}(u)]$ is elementary abelian of order $9$. Let $(u,b)$ be a $B$-subsection. Then $I(b)\cong\C_{I(B)}(u)\le\Aut([D,\C_{I(B)}(u)])\cong\GL(2,3)$. Therefore, $I(b)$ lies in a Sylow $2$-subgroup of $\GL(2,3)$ which is isomorphic to the semidihedral group $SD_{16}$ of order $16$.
By \cite[Lemma~14.5]{habil}, we may assume that $e(b)\ge 8$. If $I(b)\in\{Z_8,Q_8\}$ where $Q_8$ is the quaternion group of order $8$, then the action of $I(b)$ on $[D,I(b)]$ is free (even regular). Hence, these cases are handled by \autoref{freeact}. It remains to deal with the cases $I(b)\in\{D_8,SD_{16}\}$. 
In order to do so, we may consider a block $\overline{b}$ with defect group $Z_3^2$ and inertial quotient $I(b)$.
The numbers $k(\overline{b})$ and $l(\overline{b})$ were determined in \cite{Kiyota,WatanabeSD16}. The case $l(b)=l(\overline{b})=2$ can be ignored by \cite[Theorem~4.9]{habil}. Hence, we have $k(\overline{b})=9$ and $l(\overline{b})\in\{5,7\}$ according to the two possibilities for $I(b)$. 
In the proof of \cite[Theorem~13.7]{habil} we have computed the possible Cartan matrices for $\overline{b}$:
\begin{align*}
\begin{pmatrix}
3&.&1&.&1\\
.&3&1&.&1\\
1&1&3&1&.\\
.&.&1&3&1\\
1&1&.&1&3
\end{pmatrix}&&\text{or}&&\begin{pmatrix}
2&1&.&.&.&.&1\\
1&2&.&.&.&.&1\\
.&.&2&1&.&.&1\\
.&.&1&2&.&.&1\\
.&.&.&.&2&1&1\\
.&.&.&.&1&2&1\\
1&1&1&1&1&1&3
\end{pmatrix}.
\end{align*}
Since the construction of these matrices was carried out by enumerating the generalized decomposition numbers as in the proof of \autoref{E16}, the Cartan matrix of $b$ is just a scalar multiple of one of these matrices. Now we can apply \cite[Theorem~4.2]{habil} with the quadratic form corresponding to the positive definite matrix 
\begin{align*}
\frac{1}{2}\begin{pmatrix}
2&.&-1&.&-1\\
.&2&-1&1&-1\\
-1&-1&2&-1&1\\
.&1&-1&2&-1\\
-1&-1&1&-1&2
\end{pmatrix}&&\text{or}&&
\frac{1}{2}\begin{pmatrix}
2&-1&.&.&.&.&-1\\
-1&2&.&.&.&.&.\\
.&.&2&-1&.&.&-1\\
.&.&-1&2&.&1&.\\
.&.&.&.&2&-1&-1\\
.&.&.&1&-1&2&.\\
-1&.&-1&.&-1&.&2
\end{pmatrix}
\end{align*}
respectively. This completes the proof.
\end{proof}

The following result about regular orbits under coprime actions might be of general interest.

\begin{Proposition}\label{regorb}
Let $P$ be an abelian $p$-group such that $\Omega(P)\subseteq\Phi(P)$. Then every $p'$-automorphism group of $P$ has a regular orbit on $P$.
\end{Proposition}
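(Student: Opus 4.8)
The plan is to peel $P$ down to an elementary abelian module and then quote the theory of coprime linear groups. Write $A$ for the given $p'$-automorphism group, $V:=P/\Phi(P)$ (an $\mathbb{F}_pA$-module), and $\widehat P:=P/\Phi(\Phi(P))$. First I would record two facts valid for any coprime action on an abelian $p$-group: (a) $A$ acts faithfully on $\widehat P$, since if $a\in A$ centralises $\widehat P$ then $[P,a]\subseteq\Phi(\Phi(P))$, so $a$ centralises the summand $[P,a]$ modulo its Frattini subgroup, whence $[P,a]=[P,a,a]\subseteq\Phi([P,a])$ and $[P,a]=1$; and (b) for every $H\le A$, $\C_P(H)$ is a direct summand of $P$ and fixed-point orders multiply along short exact sequences of coprime modules. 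The hypothesis enters now: the $p$-power map $x\mapsto x^p$ has kernel $\Omega(P)\subseteq\Phi(P)$ and image $\Phi(P)$, so after reducing modulo $\Phi(\Phi(P))$ it induces an $A$-isomorphism $\Phi(P)/\Phi(\Phi(P))\cong P/\Phi(P)=V$. (Without $\Omega(P)\subseteq\Phi(P)$ this layer would be only a proper submodule of $V$, and that is exactly where the statement can break down.)

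Thus $\widehat P$ sits in a short exact sequence $1\to V\to\widehat P\to V\to1$ of $\mathbb{F}_pA$-modules, $A$ is faithful on $\widehat P$, and a regular orbit on $\widehat P$ lifts to one on $P$. By the multiplicativity in (b), $|\C_{\widehat P}(H)|=|\C_V(H)|^2=|\C_{V\oplus V}(H)|$ for every $H\le A$; since a finite $A$-set is determined up to isomorphism by the numbers $|\Fix(H)|$ of fixed points, this forces $\widehat P\cong V\oplus V$ as $A$-sets. Hence the proposition reduces to the clean statement: a finite group $A$ of order prime to $p$ acting faithfully on $V=\mathbb{F}_p^n$ has a regular orbit on $V\oplus V$. (When $\exp P=p^2$ this is also visible directly: reduction $P\to V$ has $\Phi(P)$-cosets as fibres, and since $\cohom^1(\C_A(\bar x),\Phi(P))=0$ in the coprime case each fibre has a $\C_A(\bar x)$-fixed point, so $P$ has a regular orbit precisely when some $\C_A(\bar x)$ does on $V$ — a restatement of the displayed claim.)

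For the displayed claim I would induct on $\dim V$. If $V$ is reducible, split $V=V_1\oplus V_2$ by Maschke, let $A_i$ be the image of $A$ in $\GL(V_i)$ (faithful, of order prime to $p$), take regular-orbit representatives $(v_i,w_i)$ of $A_i$ on $V_i\oplus V_i$ inductively, and observe that $((v_1,v_2),(w_1,w_2))$ has trivial $A$-stabiliser: an element fixing it projects into $A_i$ fixing $(v_i,w_i)$, hence lies in $\C_A(V_1)\cap\C_A(V_2)=1$. So we may assume $V$ irreducible. By Schur's lemma $E:=\operatorname{End}_{\mathbb{F}_pA}(V)$ is a finite field $\mathbb{F}_q$, so $A\le\GL_k(\mathbb{F}_q)$ acts absolutely irreducibly on $V=\mathbb{F}_q^k$, and for $1\ne a\in A$ the space $\C_V(a)=\ker(a-1)$ is a proper $\mathbb{F}_q$-subspace (dimension $\le k-1$, as $a$ is semisimple with an eigenvalue $\ne1$). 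If $k=1$ then $A\le\mathbb{F}_q^\times$ and any nonzero vector already has trivial stabiliser.

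The case that remains — $V$ absolutely irreducible over $\mathbb{F}_q$ of dimension $k\ge2$ — is the hard part, and I expect it to be the main obstacle. A bare estimate of $\sum_{1\ne a\in A}|\C_V(a)|^2$ against $|V\oplus V|=q^{2k}$ is not enough: for instance $(\mathbb{F}_q^\times)^k\rtimes Z_k$ acting on $\mathbb{F}_q^k$ (with $k$ prime to $p$ and large) defeats the estimate although a regular orbit on $V\oplus V$ still exists. The way forward is to use the structure of coprime linear groups: reduce to the case that $V$ is primitive (an imprimitive module is induced from a block stabiliser, which must be treated by a separate gluing step), where classical Blichfeldt-type bounds force $|A|$ to be polynomially bounded in $|V|$ of small degree, so that either a counting argument or a linear-parameter argument — promoting a regular orbit of a point stabiliser on $V\oplus V\oplus V$ to one of $A$ on $V\oplus V$ by replacing two coordinates $b,c$ with $b+tc$ for a suitable $t\in\mathbb{F}_q$ — closes the gap. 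Alternatively one invokes the known classification (resting on the classification of finite simple groups, consistently with the remark in the introduction) of coprime linear groups with no regular orbit on a single copy of $V$, and verifies directly that each such exception acquires a regular orbit on $V\oplus V$.
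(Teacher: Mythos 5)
Your reduction of the proposition to the statement ``a $p'$-group acting faithfully on $V=\mathbb{F}_p^n$ has a regular orbit on $V\oplus V$'' is correct, and it is a genuinely different (and rather slick) route to the same intermediate goal as the paper: you pass to $\widehat P=P/\Phi(\Phi(P))$, use the $p$-power map (this is where $\Omega(P)\subseteq\Phi(P)$ enters, exactly as you say) to identify $\Phi(P)/\Phi(\Phi(P))$ with $V$ as $A$-modules, and then use coprime fixed-point multiplicativity together with the table-of-marks fact that fixed-point counts $\lvert\Fix_X(H)\rvert$ for all $H\le A$ determine a finite $A$-set, to get $\widehat P\cong V\oplus V$ as $A$-sets; the paper instead decomposes $P$ into indecomposable $A$-invariant summands, invokes homocyclicity, replaces $P$ by $\Omega_2(P)$, and builds an explicit $A$-equivariant bijection $P\to\Omega(P)\times\Omega(P)$ from an $A$-invariant transversal of $P/\Omega(P)$ (following Turull). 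Your version avoids the homocyclic analysis entirely, and all the auxiliary facts you use (faithfulness on the Frattini quotient under coprime action, vanishing of first cohomology, Maschke, Schur, lifting a regular orbit from a characteristic quotient) are standard and correctly applied.

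The genuine gap is the step you yourself flag: the absolutely irreducible case of dimension $k\ge 2$ is not proved, only surrounded by candidate strategies (counting, primitivity plus Blichfeldt-type bounds, a hypothetical ``known classification'' of coprime linear groups without regular orbits on $V$). None of these is carried out, and none is routine -- as you note, naive counting already fails for monomial groups, and no usable classification of coprime linear groups lacking regular orbits on a single copy of $V$ is available to fall back on. What you are missing is that this remaining statement is exactly the theorem of Halasi and Podoski (\emph{Every coprime linear group admits a base of size two}), which is the key external input in the paper's proof: a base of size two for $A$ on $V$ is the same as a regular orbit of $A$ on $V\oplus V$ with the componentwise action. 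That result is a substantial theorem in its own right, so your argument as written is incomplete at its crux; citing Halasi--Podoski at that point would close the gap and make your proof a valid alternative to the paper's.
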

\begin{proof}
Let $A\le\Aut(P)$ be a $p'$-group. 
We may decompose $P=\bigoplus_{i=1}^n{P_i}$ into indecomposable $A$-invariant summands $P_i$. Since $\bigoplus_{i=1}^n{\Omega(P_i)}=\Omega(P)\subseteq\Phi(P)=\bigoplus_{i=1}^n{\Phi(P_i)}$, we have $\Omega(P_i)\subseteq\Phi(P_i)$ for $i=1,\ldots,n$. Suppose that we have found $x_i\in P_i$ such that $\C_A(x)=\C_A(P_i)$. Then \[\C_A(x)=\C_A(x_1)\cap\ldots\cap\C_A(x_n)=\C_A(P_1)\cap\ldots\cap\C_A(P_n)=\C_A(P)=1\] 
for $x:=x_1\ldots x_n$. Hence, we may assume that $A$ acts indecomposably on $P$. By \cite[Theorem~5.2.2]{Gorenstein}, $P$ is homocyclic, i.\,e. a direct product of isomorphic cyclic groups. By hypothesis, the exponent of $P$ is at least $p^2$. Without loss of generality, we may assume that this exponent is exactly $p^2$ (otherwise replace $P$ by $\Omega_2(P)$). 
Following \cite[Lemma~1.7]{Turull}, we will show that the action of $A$ on $P$ is isomorphic to the componentwise action of $A$ on $\Omega(P)\times\Omega(P)$. Let $x\Omega(P)\in P/\Omega(P)$. Since $A$ acts on $P/\Omega(P)$, we can define a subgroup $A_1:=\C_A(x\Omega(P))\le A$ which fixes $x\Omega(P)$ as a set. By \cite[8.2.1]{Kurzweil}, there exists a representative $r(x\Omega(P))$ of $x\Omega(P)$ such that $r(x\Omega(P))\in\C_P(A_1)$. Now for any $a\in A$ we set $r(^ax\Omega(P)):={^ar(x\Omega(P))}$. This is well defined, since $^ax\equiv {^bx}\pmod{\Omega(P)}$ implies $b^{-1}a\in A_1$ and $^ar(x\Omega(P))={^br(x\Omega(P))}$ for $a,b\in A$. Repeating this with the other orbits of cosets we end up with an $A$-invariant transversal $\mathcal{R}$ for $P/\Omega(P)$. Now let
\begin{align*}
\phi:P&\longrightarrow\Omega(P)\times\Omega(P),\\
\widetilde{x}y&\longmapsto (\widetilde{x}^p,y)\hspace{1cm}(\widetilde{x}\in\mathcal{R},\ y\in\Omega(P)).
\end{align*}
It is easy to see that $\phi$ is a bijection and
\[^a\phi(\widetilde{x}y)=({^a\widetilde{x}^p},{^ay})=\phi({^a\widetilde{x}}{^ay})=\phi({^a(\widetilde{x}y)})\]
for $a\in A$, $\widetilde{x}\in\mathcal{R}$ and $y\in\Omega(P)$. Hence, $P$ is $A$-isomorphic to $\Omega(P)\times\Omega(P)$.

By \cite{base2}, there exist $x,y\in \Omega(P)$ such that $\C_A(x)\cap\C_A(y)=1$. Hence, the $A$-orbit of $(x,y)\in \Omega(P)\times\Omega(P)$ is regular. The claim follows.
\end{proof}

We are now in a position to generalize other theorems from \cite[Chapter~14]{habil}.

\begin{Theorem}\label{elab}
Let $B$ be a block of a finite group with abelian defect group $D$ such that $D$ has no elementary abelian direct summand of order $p^3$. Then $k(B)\le|D|$.
\end{Theorem}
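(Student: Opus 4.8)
The plan is to reduce to the local analysis of a single subsection and then invoke \autoref{freeact}. Concretely, let $\mathcal{F}$ be the fusion system of $B$. Since $D$ is abelian, $B$ is controlled and $D=[D,I(B)]\oplus\C_D(I(B))$. The key point is to find an element $u\in D$ for which $\C_{I(B)}(u)$ acts freely on $[D,\C_{I(B)}(u)]$; then \autoref{freeact} gives $k(B)\le|D|$ immediately. The natural candidate for $u$ is a point in a regular orbit of $I(B)$ acting on $D$, so that $\C_{I(B)}(u)=1$ and the free-action hypothesis holds trivially.

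First I would observe that the hypothesis ``$D$ has no elementary abelian direct summand of order $p^3$'' needs to be translated into a statement about $[D,I(B)]$ (on which $I(B)$ acts faithfully and coprimely). Writing $D=[D,I(B)]\oplus\C_D(I(B))$, any automorphism of $[D,I(B)]$ extends to $D$ by acting trivially on $\C_D(I(B))$, and a regular orbit of $I(B)$ on $[D,I(B)]$ yields a regular orbit on $D$ (take $u=(v,1)$ with $v$ in the regular orbit, using that $\C_{I(B)}(v)=1$ forces $\C_{I(B)}(u)=1$). So it suffices to produce a regular orbit of the coprime action of $I(B)$ on the abelian $p$-group $Q:=[D,I(B)]$. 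The question is whether $Q$ satisfies $\Omega(Q)\subseteq\Phi(Q)$, which is exactly the hypothesis of \autoref{regorb}: $\Omega(Q)\subseteq\Phi(Q)$ holds precisely when $Q$ has no cyclic direct summand of order $p$, i.e. no $Z_p$ summand. But the hypothesis on $D$ only forbids a $Z_p^3$ summand, so $Q$ might still have $Z_p$ or $Z_p^2$ summands, and \autoref{regorb} does not apply directly.

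The main obstacle, then, is handling the low-rank elementary abelian part. I would decompose $Q=E\oplus Q'$ where $E$ is the elementary abelian part (so $\Omega(Q')\subseteq\Phi(Q')$) and $E\cong Z_p^a$ with $a\le 2$ by hypothesis. For the $Q'$-part \autoref{regorb} supplies a regular orbit of the relevant automorphism group; the remaining work is to deal with $E$. When $a\le 1$ this is essentially trivial (a faithful coprime action on $Z_p$ is by a cyclic group acting with a single regular orbit, and more generally a coprime linear group on a $1$-dimensional space is cyclic). The genuinely delicate case is $a=2$, where $I(B)$ could induce a subgroup of $\GL(2,p)$ on $E$ with no regular orbit — but here I expect to fall back on the subsection machinery as in \autoref{p3} and \autoref{E16} rather than on regular orbits: one decomposes according to a subsection $(u,b_u)$ chosen so that $\C_{I(B)}(u)$ is much smaller, using \cite[Lemma~14.5]{habil} or analogous reductions so that only finitely many ``bad'' inertial quotients on the rank-$2$ elementary abelian piece survive, and those are exactly the ones treated in the earlier propositions of this section (for $p=2$ via \autoref{cor2}, for $p=3$ via \autoref{p3}, and for $p\ge5$ the action on $Z_p^2$ of a coprime group always has a regular orbit by the results invoked for \autoref{regorb} together with the classification of coprime $\GL(2,p)$-subgroups). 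Assembling these cases — reduce to $[D,I(B)]$, split off the genuine elementary abelian part of rank $\le2$, invoke \autoref{regorb} on the complement, and mop up the rank-$2$ elementary abelian case prime-by-prime using \autoref{cor2}, \autoref{p3}, and a regular-orbit argument for $p\ge5$ — completes the proof via \autoref{freeact}.
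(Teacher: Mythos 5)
Your overall strategy (pass to an $I(B)$-invariant decomposition, apply \autoref{regorb} to the part satisfying $\Omega\subseteq\Phi$, treat the elementary abelian part of rank at most $2$ separately, and conclude via \autoref{freeact}) follows the paper's proof up to the final step, but the rank-$2$ case contains a genuine gap. For $p\ge 5$ you assert that every coprime subgroup of $\GL(2,p)$ has a regular orbit on $Z_p^2$; this is false for every prime: the normalizer of a Singer cycle is a $p'$-group of order $2(p^2-1)$ acting on only $p^2-1$ non-trivial vectors, so every point stabilizer has order at least $2$ and there is no regular orbit. (This is exactly why \autoref{p3} must treat $I(b)\in\{D_8,SD_{16}\}$ separately for $p=3$.) Your fallback of ``subsection machinery so that only finitely many bad inertial quotients survive, handled by the earlier propositions'' is not available for $p\ge 7$ (and not justified for $p=5$): there is no analogue of \autoref{cor2} or \autoref{p3} for those primes, and you give no argument replacing it, so the case of a $Z_p^2$ summand in $[D,I(B)]$ with $p\ge 5$ remains unproved in your proposal.

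The missing idea is simpler and uniform in $p$: after choosing, by \autoref{regorb}, elements $x_i$ with $\C_{I(B)}(x_i)=\C_{I(B)}(D_i)$ in every non-elementary-abelian invariant summand $D_i$, also pick an \emph{arbitrary} non-trivial $x_1$ in the (at most one) elementary abelian summand $E\cong Z_p^2$ and set $u:=x_1x_2\cdots x_n$. Then $\C_{I(B)}(u)$ centralizes all summands except possibly $E$, and since the action is coprime and $x_1$ is a non-trivial fixed point, the decomposition $E=\C_E(\C_{I(B)}(u))\oplus[E,\C_{I(B)}(u)]$ forces $[D,\C_{I(B)}(u)]=[E,\C_{I(B)}(u)]$ to be cyclic of order at most $p$. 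Hence the cyclic case of \autoref{freeact} applies directly, with no prime-by-prime discussion; this is how the paper concludes. (Your detour through \autoref{cor2} and \autoref{p3} for $p=2,3$ would work but is unnecessary, and note that your splitting $Q=E\oplus Q'$ must be chosen $I(B)$-invariantly, which one gets from the decomposition into indecomposable invariant summands as in the proof of \autoref{regorb}.)
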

\begin{proof}
As in the proof of \autoref{regorb}, we can decompose $D=\bigoplus_{i=1}^n{D_i}$ into indecomposable $I(B)$-invariant summands $D_i$. Each $D_i$ is homocyclic. If $D_i$ is not elementary abelian, then we choose $x_i\in D_i$ such that $\C_{I(B)}(x_i)=\C_{I(B)}(D_i)$ by \autoref{regorb}. Now assume that $D_i$ is elementary abelian. Then by hypothesis, $|D_i|\le p^2$. Here we choose any $1\ne x_i\in D_i$. If all elementary abelian components $D_i$ have order $p$, then it is easy to see that $\C_{I(B)}(x)=1$ for $x:=x_1\ldots x_n$. In this case the claim has already been known to Brauer (see \cite[Proposition~4.7]{habil} for example). Now suppose that only $D_1$ is elementary abelian and of order $p^2$. Then $[D,\C_{I(B)}(x)]$ is cyclic, and the claim follows from \autoref{freeact}.
\end{proof}

As usual, we can say slightly more if $p$ is small.

\begin{Proposition}\label{p2}
Let $B$ be a $2$-block of a finite group with abelian defect group $D$ such that $D$ has no elementary abelian direct summand of order $2^8$. Then $k(B)\le|D|$.
\end{Proposition}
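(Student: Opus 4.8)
The plan is to follow the proof of \autoref{elab} closely, the only new ingredient being a closer analysis of which direct summands of $D$ force a non-cyclic contribution. Since $I(B)$ has odd order it is solvable. Decompose $D=\bigoplus_{i=1}^n D_i$ into $I(B)$-invariant indecomposable summands; by \cite[Theorem~5.2.2]{Gorenstein} each $D_i$ is homocyclic, and since $I(B)$ acts coprimely on $\Omega(D_i)$, the latter is an irreducible $\mathbb{F}_2 I(B)$-module. Whenever $D_i$ is not elementary abelian we use \autoref{regorb} to pick $x_i\in D_i$ with $\C_{I(B)}(x_i)=\C_{I(B)}(D_i)$, and whenever $I(B)$ has a regular orbit on $D_i$ (in particular whenever $|D_i|\le 4$) we pick $x_i$ in such an orbit, so that again $\C_{I(B)}(x_i)=\C_{I(B)}(D_i)$.

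Call the remaining $D_j$ \emph{exceptional}: these are elementary abelian of order at least $8$ on which $I(B)$ has no regular orbit. Their direct sum $E$ is an elementary abelian direct summand of $D$, so the hypothesis gives $|E|\le 2^7$; since each exceptional component has $\mathbb{F}_2$-dimension at least $3$, there are at most two of them, and if there are two then both have dimension exactly $3$ (for $\mathbb{F}_2$-dimension $4$ every odd-order irreducible linear group lies in the normalizer of a Singer cycle and hence has a regular orbit). The crucial point is that on an exceptional component the stabilizer of a suitable non-zero vector has prime order. This follows from the classification of the solvable (equivalently, odd-order) irreducible subgroups of $\GL(d,2)$ for $3\le d\le 7$: by Huppert such a group lies in $\Gamma\mathrm{L}(1,2^d)$ apart from some imprimitive (and, for $d=6$, a few further) cases, and one checks that in every such group without a regular orbit a non-trivial point stabilizer is a $Z_3$ (if $d\in\{3,6\}$), a $Z_5$ (if $d=5$), or a $Z_7$ (if $d=7$). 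Fix such an $x_j$ on each exceptional component, put $u:=x_1\cdots x_n$, and let $(u,b)$ be a $B$-subsection; by \autoref{fusion}, $b$ has defect group $D$ and $I(b)\cong\C_{I(B)}(u)$, and since $\C_{I(B)}(u)\le\C_{I(B)}(D_i)$ for every non-exceptional $D_i$, we have $[D,\C_{I(B)}(u)]\le E$.

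Now distinguish two cases. If there is no exceptional component, exactly one, or two whose combined action on $E$ does not induce the full direct product $\mathrm{F}_{21}\times\mathrm{F}_{21}$, then the $x_j$ can be chosen so that $\C_{I(B)}(u)$ has order $1$ or a prime; hence $\C_{I(B)}(u)$ acts freely on $[D,\C_{I(B)}(u)]$ and \autoref{freeact} gives $k(B)\le|D|$. In the remaining case $E=D_{j_1}\oplus D_{j_2}$ is a sum of two $3$-dimensional $\mathrm{F}_{21}$-modules on which $I(B)$ induces $\mathrm{F}_{21}\times\mathrm{F}_{21}$; the best choice then yields $\C_{I(B)}(u)\cong Z_3\times Z_3$ with $[D,\C_{I(B)}(u)]=[D_{j_1},Z_3]\oplus[D_{j_2},Z_3]$ of order $2^2\cdot 2^2=16$, so \autoref{cor2} gives $k(B)\le|D|$.

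The main obstacle will be the group theory of the second paragraph: determining, for $3\le d\le 7$, which odd-order subgroups of $\GL(d,2)$ have no regular orbit on $Z_2^d$ and the orders of their point stabilizers, and then controlling how the at most two exceptional components interact --- in particular excluding a stabilizer whose commutator with $D$ has $\mathbb{F}_2$-dimension $5$, $6$ or $7$. This is a finite but delicate case analysis; its most sensitive points (the Singer normalizers on $Z_2^5$, $Z_2^6$, $Z_2^7$ and the $\mathrm{F}_{21}\times\mathrm{F}_{21}$-action on $Z_2^6$) are, as with the genus argument in the proof of \autoref{E16}, most reliably verified with machine help, and through \autoref{regorb} the argument relies on the classification of the finite simple groups.
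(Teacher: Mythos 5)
Your argument is correct and shares the skeleton of the paper's proof --- reduce via the decomposition used for \autoref{elab} and \autoref{regorb} to the elementary abelian part, which by hypothesis has order at most $2^7$, then analyze the odd-order irreducible subgroups of $\GL(d,2)$ for $d\le 7$ and finish with \autoref{freeact} or \autoref{cor2} --- but it routes the cases differently. The paper works throughout with the target $\lvert[D,\C_{I(B)}(x)]\rvert\le 16$: for $|D|=2^6$ with a decomposable action it just picks non-trivial elements in the two invariant summands, which already forces this bound; your subdirect-product discussion of $(Z_7\rtimes Z_3)\times(Z_7\rtimes Z_3)$ is therefore correct but superfluous, since with two exceptional $3$-dimensional components one has $x_{j_i}\in\C_{D_{j_i}}(\C_{I(B)}(u))\ne 1$ and hence $\lvert[D_{j_i},\C_{I(B)}(u)]\rvert\le 4$ whatever the induced group is. For the irreducible action on $Z_2^6$ the paper verifies the bound by a computation with the GAP package \texttt{IRREDSOL}, and for $Z_2^7$ it quotes Suprunenko to reduce to $Z_{127}\rtimes Z_7$, where \autoref{freeact} applies; you instead aim for a vector whose stabilizer has prime order and then invoke \autoref{freeact}, which is the same kind of finite, machine-checkable statement and does hold (for $d=6$ every odd-order irreducible subgroup is, up to conjugacy, contained in $\Gamma\mathrm{L}(1,64)$, whose multiplicative part acts freely and whose odd Galois part is $Z_3$, or is a $3$-group and hence contained in the Sylow $3$-subgroup $Z_3\wr Z_3$, where a vector with all three coordinates non-zero has stabilizer of order at most $3$; for $d\in\{5,7\}$ only the Singer normalizers occur, with stabilizers $Z_5$, $Z_7$). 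One justification should be repaired: for $d=4$ the regular orbit does not follow merely from lying in the Singer normalizer $\Gamma\mathrm{L}(1,16)$ --- for $d=5,7$ it is exactly these normalizers that lack regular orbits --- but from the fact that the Galois group of $\mathbb{F}_{16}/\mathbb{F}_2$ is a $2$-group, so an odd-order irreducible subgroup lies in the Singer cycle $Z_{15}$ itself and acts freely; this slip does not affect the conclusion.
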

\begin{proof}
Using the arguments in the proof of \autoref{elab}, we may assume that $D$ is elementary abelian of order at most $2^7$. 
We will choose an element $x\in D$ such that $|[D,\C_{I(B)}(x)]|$ is small. 
By \autoref{cor2}, we may assume that $32\le|[D,\C_{I(B)}(x)]|<|D|$. Let $|D|=64$. If $D$ decomposes as $D=D_1\oplus D_2$ with $I(B)$-invariant subgroups $D_i$, then we can take $1\ne x_i\in D_i$ and $x:=x_1x_2$. It follows that $|[D,\C_{I(B)}(x)]|\le 16$. Hence, we may assume that $I(B)$ acts irreducibly on $D$. By the Feit-Thompson Theorem, $I(B)$ is solvable. Thus, we can use the GAP package \texttt{IRREDSOL}~\cite{GAP47} to find all possibilities for $I(B)$. It turns out that in all cases we find elements $x\in D$ such that $|[D,\C_{I(B)}(x)]|\le 16$. Finally, let $|D|=2^7$. 
Here, it can happen that $D=D_1\oplus D_2$ with irreducible $I(B)$-invariant subgroups of order $2^4$ and $2^3$ respectively. However, there is always an element $x_1\in D_1$ such that $\C_{I(B)}(x_1)=\C_{I(B)}(D_1)$. Therefore, it remains to handle the case where $I(B)$ acts irreducibly on $D$. It turns out that we only need to deal with the case $I(B)\cong Z_{127}\rtimes Z_7$ (cf. \cite[Remark 4 on p.~168]{Suprunenko}). For this case, \autoref{freeact} applies.
\end{proof}

Apart from the elementary abelian defect group of order $64$, the proof of \autoref{p2} also works for some non-abelian defect groups of order $64$. Thus, referring to the list in \cite[p. 200]{habil}, Brauer's $k(B)$-Conjecture is still open for the defect groups $\texttt{SmallGroup(64,q)}$ where
\[q\in\{134, 135, 136, 137, 138, 139, 202, 224, 229, 230, 231, 238, 239, 242, 254, 255, 257, 258, 259, 262\}.\]
Speaking of abelian defect groups for $p=2$, the next challenge is $D\cong Z_2^8$ with $I(B)\cong (Z_{31}\rtimes Z_5)\times(\Z_7\rtimes Z_3)$ acting reducibly. 

\begin{Proposition}\label{p35}
Let $p\in\{3,5\}$, and let $B$ be a $p$-block of a finite group with abelian defect group $D$ such that $D$ has no elementary abelian direct summand of order $p^4$. Then $k(B)\le|D|$.
\end{Proposition}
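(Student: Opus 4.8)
The plan is to imitate the structure of the proofs of \autoref{elab} and \autoref{p2}, reducing to the case of an elementary abelian defect group $D\cong Z_p^m$ on which $I(B)$ acts, and then exploiting the classification of the relevant inertial quotients together with the case analyses already available for small $[D,\C_{I(B)}(u)]$. First I would decompose $D=\bigoplus_{i=1}^n D_i$ into indecomposable $I(B)$-invariant summands (each homocyclic by \cite[Theorem~5.2.2]{Gorenstein}). For any $D_i$ that is not elementary abelian I invoke \autoref{regorb} to pick $x_i\in D_i$ with $\C_{I(B)}(x_i)=\C_{I(B)}(D_i)$; for an elementary abelian $D_i$ the hypothesis forces $|D_i|\le p^3$, and if $|D_i|\le p^2$ I simply pick any $1\ne x_i\in D_i$. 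Setting $x:=\prod x_i$, the only components contributing nontrivially to $\C_{I(B)}(x)$ are the elementary abelian ones of order $p^2$ and $p^3$. If there is at most one such summand and it has order $p^2$, then $[D,\C_{I(B)}(x)]$ is cyclic and \autoref{freeact} finishes it; if there is at most one such summand of order $p^3$ and no others, the problem reduces to an elementary abelian defect group of order $p^3$ (for $p=3$ this is covered by \cite[Theorem~13.7]{habil}, and for $p=5$ one would need the analogous small-rank analysis, e.g.\ via \cite[Theorem~14.17]{habil} / \autoref{p3}-style arguments). The genuinely new work is therefore the case where $D$ itself is elementary abelian of order $p^3$ with $I(B)$ acting \emph{irreducibly}, since a reducible action splits off a summand of order $p$ or $p^2$ and is handled as above.

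Next I would treat this irreducible case by classifying the admissible inertial quotients. Here $I(B)\le\GL(3,p)$ is a $p'$-group acting irreducibly on $Z_p^3$. For $p=3$ one has $|\GL(3,3)|=2^4\cdot 3^3\cdot 13$, so a $p'$-subgroup has order dividing $2^4\cdot 13$; the irreducible such subgroups are very limited (the cyclic Singer-type group $Z_{13}$ and its normalizer $Z_{13}\rtimes Z_3$ — which is not a $3'$-group — plus small $2$-groups acting reducibly), and in fact a $3'$-group acting irreducibly on $Z_3^3$ must have order divisible by $13$, giving essentially $I(B)\cong Z_{13}$ acting regularly. Then $I(B)$ acts freely on $D$ and \autoref{freeact} applies directly. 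For $p=5$ the analogous computation uses $|\GL(3,5)|=2^5\cdot 3\cdot 5^3\cdot 31$; a $5'$-group acting irreducibly on $Z_5^5$... wait, on $Z_5^3$, must involve the Singer cycle of order $31$ (or $31/\gcd$ of it), so again $I(B)$ has a cyclic subgroup of order $31$ acting freely and one reduces, after possibly passing to the block $\overline{b}$ dominated by a subsection block, to $I(B)$ cyclic — whence \autoref{freeact} finishes it. In both cases I would lean on the \texttt{IRREDSOL}/\texttt{GAP} enumeration (the Feit--Thompson Theorem guaranteeing solvability) exactly as in the proof of \autoref{p2}, rather than doing the group theory by hand.

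The main obstacle, as in \autoref{p2} and \autoref{p3}, is that \autoref{freeact} only disposes of the cases where $\C_{I(B)}(u)$ acts freely on $[D,\C_{I(B)}(u)]$; when it does not, one must exhibit an explicit positive definite quadratic form witnessing $\min\{x|D|C^{-1}x^{\mathrm T}:0\ne x\in\mathbb Z^{l}\}\ge l$ and then apply \cite[Theorem~4.2]{habil} or \cite[Theorem~4.4]{habil}. This requires first pinning down the Cartan matrix $C$ of the subsection block (up to basic sets) by enumerating generalized decomposition numbers in the style of \autoref{E16}, and then finding a suitable form in the dual lattice $|D|C^{-1}$. For $p=3$ the possible non-free irreducible actions on $Z_3^3$ with large $e(B)$ give inertial quotients that either act freely (so \autoref{freeact} applies) or reduce to $Z_3^2$-situations already settled in \autoref{p3} via the semidihedral-group analysis; for $p=5$ one similarly expects that after splitting off cyclic Singer pieces nothing non-free of large inertial index on $Z_5^3$ survives. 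So in the end I anticipate the proof closes with repeated appeals to \autoref{freeact}, \autoref{p3}, and the small-rank results of \cite[Chapter~14]{habil}, with the only computational burden being the \texttt{IRREDSOL} check that no exotic irreducible $p'$-subgroup of $\GL(3,p)$ evades these tools.
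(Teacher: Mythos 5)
Your overall frame (decompose $D$ into $I(B)$-invariant homocyclic summands, use \autoref{regorb} on the non-elementary-abelian parts, and then worry only about the elementary abelian part of rank at most $3$) matches the paper, but the two steps you build on top of it do not work. First, the claimed reduction to ``$D$ itself elementary abelian of order $p^3$ with $I(B)$ acting irreducibly'' is not a legitimate reduction: Brauer's conjecture for a smaller defect group does not imply it for $D$, and none of the tools here replace $D$ by a summand. What the choice $x=x_1x_2$ actually buys is a subsection $(x,b)$ whose defect group is still all of $D$ and whose inertial quotient $\C_{I(B)}(x)=\C_{I(B)}(x_1)\cap\C_{I(B)}(D_2)$ embeds into the stabilizer of the vector $x_1$ in the action of the (in general \emph{reducible}) $p'$-group $I(B)/\C_{I(B)}(D_1)\le\GL(3,p)$ on $D_1\cong Z_p^3$. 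For $p=3$ this already forces $\lvert[D,\C_{I(B)}(x)]\rvert\le 9$ (coprime action with the nontrivial fixed point $x_1$), so \autoref{p3} finishes at once -- your detour through irreducible $3'$-subgroups of $\GL(3,3)$ is unnecessary, though harmless.

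Second, and this is the real gap, your group-theoretic claim for $p=5$ is false: an irreducible $5'$-subgroup of $\GL(3,5)$ need not contain a Singer element of order $31$. For instance $S_4$ in its natural $3$-dimensional representation over $\mathbb{F}_5$, or the full monomial group of order $4^3\cdot 6$ (diagonal units extended by coordinate permutations), are irreducible $5'$-subgroups with no element of order $31$; moreover such groups do not act freely, and vector stabilizers of order up to $6$ (isomorphic to $S_3$) genuinely occur, so \autoref{freeact} cannot close these cases and ``reducing to $I(B)$ cyclic'' is not available. (Also $\lvert\GL(3,3)\rvert=2^5\cdot3^3\cdot13$ and $\lvert\GL(3,5)\rvert=2^7\cdot3\cdot5^3\cdot31$, not the orders you quote.) Your fallback -- enumerating generalized decomposition numbers and exhibiting quadratic forms as in \autoref{E16} for rank $3$ and $p=5$ -- is only sketched and would be a substantial computation. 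The paper avoids all of this by quoting the fact (from the proofs of Theorems 14.16 and 14.17 in the cited lecture notes) that \emph{every} $5'$-subgroup of $\GL(3,5)$, irreducible or not, fixes some nonzero vector $x_1$ whose stabilizer has order at most $4$ or is isomorphic to $S_3$; then $\C_{I(B)}(x)$ embeds into this stabilizer via $g\mapsto g\C_{I(B)}(D_1)$, and the known result for subsections with such small inertial quotients (Lemma 14.5 there) yields $k(B)\le|D|$. That stabilizer bound, or some substitute handling the $S_4$-type and monomial-type inertial quotients, is the ingredient missing from your proposal.
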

\begin{proof}
The case $p=3$ follows easily from \autoref{p3}. Now let $p=5$. As before, let $D=D_1\oplus D_2$ be an $I(B)$-invariant decomposition such that $D_1$ is elementary abelian and $\C_{I(B)}(x_2)=\C_{I(B)}(D_2)$ for some $x_2\in D_2$. By \autoref{elab}, we may assume that $|D_1|=p^3$. Since $I(B)/\C_{I(B)}(D_1)\le\Aut(D_1)\cong\GL(3,5)$, one can show that there is an element $x_1\in D_1$ such that $\lvert\C_{I(B)}(x_1)/\C_{I(B)}(D_1)\rvert\le 4$ or $\C_{I(B)}(x_1)/\C_{I(B)}(D_1)\cong S_3$ where $S_3$ is the symmetric group of degree $3$ (cf. \cite[proofs of 14.16 and 14.17]{habil}). Let $x:=x_1x_2$. Then \[\C_{I(B)}(x)=\C_{I(B)}(x_1)\cap\C_{I(B)}(x_2)=\C_{I(B)}(x_1)\cap\C_{I(B)}(D_2)\hookrightarrow\C_{I(B)}(x_1)/\C_{I(B)}(D_1)\]
where the inclusion comes from the canonical map $g\mapsto g\C_{I(B)}(D_1)$.
The claim follows from \cite[Lemma 14.5]{habil}.
\end{proof}

The new method does not suffice to overcome the next problems for $p\in\{3,5,7\}$ already described in \cite[Chapter~14]{habil}.

\section*{Acknowledgment}
The author thanks Thomas Breuer for discussion and providing a new version of the GAP function\linebreak \texttt{OrthogonalEmbeddings}. The author also appreciates a very helpful correspondence with Atumi Watanabe. Moreover, the author thanks the anonymous referee for pointing out an error in the proof of \autoref{fusion} in an earlier version.
This work is supported by the Carl Zeiss Foundation and the Daimler and Benz Foundation. 

\newcommand{\noopsort}[1]{}

\begin{center}
Benjamin Sambale\\
Institut für Mathematik\\
Friedrich-Schiller-Universität\\
07743 Jena\\
Germany\\
\href{mailto:benjamin.sambale@uni-jena.de}{benjamin.sambale@uni-jena.de}
\end{center}
\end{document}